\documentclass[11pt]{article}
\usepackage{lmodern}
\usepackage{xurl}

\usepackage{hyperref}

\usepackage[toc,page]{appendix}
\usepackage[utf8]{inputenc}
\usepackage{amsmath, latexsym, stmaryrd, amsthm, caption, amssymb, bbm,
	,xcolor,subfig,bbm, multirow, multicol, makecell,graphics,pgfplots,bm,tabularx,a4wide,xurl,breakcites,enumitem,mathtools, comment,libertineRoman,authblk}
\PassOptionsToPackage{}{stmaryrd}
\SetSymbolFont{stmry}{bold}{U}{stmry}{m}{n}

\hypersetup{
	colorlinks=true,
	linkcolor=blue,         
	citecolor=blue,         
	urlcolor=blue,            
	pdfstartview=FitV
}
\numberwithin{equation}{section}
\usepackage[T1]{fontenc}
\usepackage[normalem]{ulem}

\newcommand{\tonde}[1]{\left( #1 \right)}

\newcommand{\di}{\mathrm d }

\newcommand{\ee}{\, \wedge \,} 
\newcommand{\ses}{\Leftrightarrow} 

\newcommand{\fre}{\ar@{-}} 

\newcommand{\eps}{\varepsilon}

\newcommand{\e}{{\rm e}}

\newcommand{\N}{\mathbb{N}}
\newcommand{\R}{\mathbb{R}}

\newcommand{\E}{\mathbb{E}}
\newcommand{\Z}{\mathbb{Z}}

\newcommand{\C}{\mathbb{C}}

\renewcommand{\S}{\mathbb S}
\renewcommand{\P}{\mathbb P}

\renewcommand{\di }{\, \mathrm{d}}

\renewcommand{\e}{\mathrm e}

\newcommand{\Wick}[1]{\mathopen{\vcentcolon} #1\mathclose{\vcentcolon}}

\DeclareMathOperator{\Cov}{Cov}

\DeclareMathOperator{\Var}{Var}

\DeclareMathOperator{\dW}{dist_W}
\DeclareMathOperator{\Vol}{Vol}
\DeclareMathOperator{\Span}{span}

\def\be{\begin{eqnarray}}
	\def\ee{\end{eqnarray}}
\def\ben{\begin{eqnarray*}}
	\def\een{\end{eqnarray*}}

\usepackage[nameinlink,capitalise,noabbrev]{cleveref}
\usepackage{aliascnt}

\newtheorem{theorem}{Theorem}[section]
\crefname{theorem}{theorem}{theorems}
\Crefname{theorem}{Theorem}{Theorems}

\newaliascnt{question}{theorem}
\newtheorem{question}[question]{Question}
\aliascntresetthe{question}
\crefname{question}{question}{questions}
\Crefname{question}{Question}{Questions}

\newaliascnt{lemma}{theorem}
\newtheorem{lemma}[lemma]{Lemma}
\aliascntresetthe{lemma}
\crefname{lemma}{lemma}{lemmas}
\Crefname{lemma}{Lemma}{Lemmas}

\newaliascnt{proposition}{theorem}
\newtheorem{proposition}[proposition]{Proposition}
\aliascntresetthe{proposition}
\crefname{proposition}{proposition}{propositions}
\Crefname{proposition}{Proposition}{Propositions}

\newaliascnt{corollary}{theorem}
\newtheorem{corollary}[corollary]{Corollary}
\aliascntresetthe{corollary}
\crefname{corollary}{corollary}{corollaries}
\Crefname{corollary}{Corollary}{Corollaries}

\newtheorem{assumption}{Assumption}
\crefname{assumption}{assumption}{assumptions}
\Crefname{assumption}{Assumption}{Assumptions}

\crefname{claim}{claim}{claims}
\Crefname{claim}{Claim}{Claims}

\theoremstyle{definition}

\newaliascnt{definition}{theorem}
\newtheorem{definition}[definition]{Definition}
\aliascntresetthe{definition}
\crefname{definition}{definition}{definitions}
\Crefname{definition}{Definition}{Definitions}

\newaliascnt{example}{theorem}

\aliascntresetthe{example}
\crefname{example}{example}{examples}
\Crefname{example}{Example}{Examples}

\newaliascnt{remark}{theorem}
\newtheorem{remark}[remark]{Remark}
\aliascntresetthe{remark}
\crefname{remark}{remark}{remarks}
\Crefname{remark}{Remark}{Remarks}

\pgfplotsset{compat=1.18}
\usepackage{subfig}

\title{Phase Transitions in the Fluctuations of Functionals of Random Neural Networks \\}
\author{Simmaco Di Lillo}
\author{Leonardo Maini}
\author{Domenico Marinucci}
\affil{RoMaDS - Department of Mathematics, University of Rome Tor Vergata, 	Rome, Italy \newline
	{\tt{{\{dilillo, maini, marinucc\}@mat.uniroma2.it}}}}

\date{}                     

\usepackage{titlesec}

\begin{document}

\maketitle

\begin{abstract}
	We establish central and non-central limit theorems for sequences of functionals of the Gaussian output of an infinitely-wide random neural network on the sphere $\mathbb{S}^d$. We show that the asymptotic behaviour of these functionals as the depth of the network increases depends crucially on the fixed points of the covariance function, resulting in three distinct limiting regimes: convergence to the same functional of a limiting Gaussian field, convergence to a Gaussian distribution, convergence to a distribution in the $Q$th Wiener chaos, $Q\ge2$. 
    Our proofs exploit tools that are now classical  (Hermite expansions, Diagram Formula, Stein-Malliavin techniques), but also ideas which have never been used in similar contexts: in particular, the asymptotic behaviour is determined by the fixed-point structure of the iterative operator associated with the covariance, whose nature and stability governs the different limiting regimes.

\end{abstract}
\noindent \textbf{Keywords}: Deep Random Neural Networks, Asymptotic Theory, Hermite Expansions, Malliavin-Stein Method.

\noindent \textbf{AMS Classification}:   60G60, 68T07
\renewcommand{\contentsname}{Structure of the work}
\cleardoublepage
\tableofcontents
\clearpage

\section{Introduction}

Neural networks are ubiquitous in the revolution that is taking place with machine learning and artificial intelligence techniques. A rapidly growing number of mathematical tools and techniques have been devoted to probe their main features; among these, a lot of interest has been drawn by the idea of investigating the properties of neural networks with random coefficients in various asymptotic limits. This approach goes back at least to \cite{neal1996priors},
who was the first to note that, in the infinite width regime, neural networks converge (in the sense of finite-dimensional distributions) to Gaussian random fields. This ansatz was later explored rigorously by \cite{g.2018gaussian,lee2018deep} and others; more recently, the result was extended to functional convergence by \cite{hanin2}, whereas a few other authors have strengthened convergence with quantitative central limit theorems, see, for instance, \cite{Goldstein2024,basteri2024quantitative,Torrisi2025, favaro2025quantitative,celli1,celli2} and the references therein. Neural networks with random coefficients are of interest for a number of reasons: they represent the model at initialization, before training has taken place; they can be viewed as a Bayesian prior, to be updated to a posterior by means of a Gaussian likelihood; and they can be viewed as a "generic" realization of a neural network with a given architecture, in order to explore its functional properties.

To understand the characteristics of different architectures, a natural approach is to investigate the spectral and geometric properties of the resulting fields, in particular in the regime where the depth (i.e., the number of layers, denoted by $L$) gets larger and larger (clearly the most relevant for current applications). In a recent contribution \cite{nostro}  it has been shown that the behaviour of the different architectures as the depth $L$ increases can be partitioned into three classes, according to the value of a single parameter that represents the derivative of the covariance function at the origin. In particular, when this parameter is smaller than unity (labelled the low-disorder case), the spectral mass of the neural network Gaussian field (NNGF, in short) concentrates at the origin; when this parameter is larger than unity (the high-disorder case), the spectral mass is not confined to any compact set, meaning that the behaviour of the field is dominated by components corresponding to higher and higher frequencies. When the parameter is exactly equal to unity (the sparse case, which includes the ReLU activation function, with a suitable normalization), there is a form of boundary behaviour, where the realizations of the NNGF converge to a constant function in the norm $L^2$, but do not exhibit any form of convergence in the stronger Sobolev sense. In a further contribution \cite{nostro2}, the expected geometry of the NNGF excursion sets was investigated; in particular, the expected boundary volume of the excursion regions was studied, showing that it could exhibit fractal, diverging, constant or vanishing values, again depending on the derivative of the covariance function of the field at the origin.

The present paper takes one further step in this direction, investigating the fluctuations of nonlinear functionals of the NNGF, in the limit where the depth goes to infinity. Our results cover, for instance, excursion volumes, but in general they allow to study the asymptotics for arbitrary local functionals of the NNGF. In particular, we establish central and non-central limit theorems, by means of techniques which can be viewed as a broad generalization (of some independent interest, we believe) of the classical techniques by Dobrushin and Major, Breuer and Mayor, and Taqqu (see \cite{BreuerMajor1983,DobrushinMajor1979,Taqqu1979,NourdinPeccati2011,MainiNourdin2024}, and the references therein). Anticipating some of the results below, we stress that our limiting distribution is typically non-Gaussian in the low disorder case: in fact, in this regime (under a suitable normalization) we establish a form of $L^2$ convergence to a non-degenerate, nonlinear transform of a limiting random field with bounded covariance function. On the other hand, in the high-disorder case after suitable normalization the limit can be Gaussian or non Gaussian, with a phase transition depending on the activation function and the dimension of the input space (the higher the dimension, the more difficult it becomes to have Gaussian behaviour). More often, the limiting law is of the second type, a peculiar distribution which we introduce here for the first time. The sparse case is similar to the high-disorder one in the sense that it can show both Gaussian and non-Gaussian behaviour, with a different normalization and phase transition. It is important to stress that our proofs exploit tools that are well-known in this literature (Hermite expansions, Diagram Formula, Stein-Malliavin techniques), but also ideas which apparently have never been used in these contexts. In particular, the asymptotic behaviour is determined by the fixed-point structure of the iterative operator associated with the covariance, whose nature and stability governs the different limiting regimes. In addition, the real-analytic structure of the covariance function plays a role in some technical parts of the proofs; see \Cref{sec: proofs} and \Cref{sec: auxiliary} for more details. Quantitative versions of our results are given in Wasserstein distance, using the now classical Stein-Malliavin approach discussed in \cite{NourdinPeccati2011}.

\subsection{Informal statements of the main results}\label{sec:informalmainresults}
We now introduce the notation and main results, at a high level of generality and omitting many details; this will allow us to explain our findings and compare them with the existing literature. Our definitions and formal statements will be given in full detail starting from \Cref{sec::background}.

In the sequel, $T_L$ denotes a NNGF of depth $L$ associated to a certain underlying activation function $\sigma$. More precisely, $T_L=(T_L(x))_{x\in\S^d}$ is a centred, continuous, isotropic Gaussian field with unit variance and covariance function defined recursively by composition (denoted by $\circ$)
\[
\E[T_L(x) T_L(y)]=:\kappa_L(\langle x,y \rangle)\,\,,\qquad \kappa_L=\underbrace{\kappa \circ \dots\circ \kappa}_{L \text{ times}}\,,
\]
where $\kappa:[-1,1]\rightarrow[-1,1]$, which satisfies $\kappa(1)=1$, is completely determined by the choice of the activation function $\sigma$.
In this paper, we will focus specifically on functionals of $T_L$ of the form
\begin{equation}
    \label{eq:first-def-functional}
F_L:=\int_{\S^d}\varphi(\widehat T_L(x))\,\di x\,,
\end{equation}
where $\varphi:\R\rightarrow\R$ is in $L^2(\R,e^{-x^2/2}\,dx)$ and $\widehat T_L$, whose covariance function is $\widehat \kappa_L$, is a rescaling of $T_L$ (the reason why we consider such a rescaling will become clearer later). Local functionals such as \eqref{eq:first-def-functional}  have been widely studied in the literature on the geometry of random fields, see the next subsections for precise references; for instance, choosing $\varphi=\mathbbm{1}_{[u,\infty)}$, $F_L$ becomes the so-called excursion volume (at level $u\in\R$) of $\hat T_L$, namely
\[
F_L:=\Vol\left(\{x\in\S^d\,:\,\widehat T_L(x)\ge u\}\right)\,.
\]


We will focus on the following question:
\begin{question}

    What is the limit in distribution for the standardized  fluctuations of the sequence $F_L$? Namely, as $L\rightarrow\infty$,
    \[
\widetilde F_L:=\frac{F_L-\E[F_L]}{\sqrt{\Var(F_L)}}\overset{d}{\longrightarrow}\,\,?\,
\]
\label{question1-1}
\end{question}

Before stating our main results, we need to introduce some more notation. Under the assumptions above, $F_L$ has an orthogonal expansion in $L^2(\Omega)$ (the so-called chaotic decomposition) of the form
\[
F_L=\E[F_L]+\sum_{q=Q}^\infty F_L[q]\qquad
\]
where $F_L[q]$ denotes the $q$th chaotic component of $F_L$ and $Q$ is the \textbf{Hermite rank} of $F_1$, defined as
\[
Q:=\inf\{\,q\ge 1\,:\,\Var(F_1[q])\neq 0\,\}\,.
\]
In our context, $Q$ is always $\ge2$.
As anticipated, the answer to \Cref{question1-1} is fully determined by the fixed-point structure of $\kappa$.  A first distinction based on the nature of fixed points was studied in \cite{nostro}, where three regimes were considered: the low-disorder regime $k'(1)<1$; the high-disorder regime $k'(1)>1$ and the (critical) sparse regime $k'(1)=1$. Then, it was shown that $\kappa'(1)$ governs an important connection between the geometric properties of the excursion sets of a random neural network and the rate of decay of its angular power spectrum. In this paper, we will further explore the nature of the fixed points of $\kappa$, and we will show how the answer to \Cref{question1-1} depends on higher order features that go beyond the derivative at $1$.

Let us now summarize and discuss our main results in the three regimes.

\subsubsection{Low-disorder: limiting fields and limiting functionals}
In the low-disorder regime, we will introduce some regularity conditions on the behaviour of the covariance function at the origin (see \Cref{ass::CRI} below). Under these conditions, we shall prove that $\widehat \kappa_L\rightarrow\widehat \kappa_\infty$ uniformly in the compact sets of $[-1,1]$, where $\hat \kappa_\infty$ is the covariance function of a well-defined limiting Gaussian field $\widehat T_\infty$. Moreover, we have (see \Cref{thm:low-disorder})
    \[
     F_L \overset{d}{\rightarrow} F_\infty:=\int_{\S^d}\varphi(\widehat T_\infty(x))\,\di x\,.
    \]
    The distribution of $F_\infty$ is generally not Gaussian, see \Cref{rem:distr of Finfty}.

\subsubsection{Sparse/high-disorder: sharp phase transition to non-Gaussian fluctuations}\label{informalHDeS}

In the other two regimes, we shall consider different technical assumptions: \Cref{ass::CRI} in the high-disorder case and \Cref{ass:cri-pot} in the sparse case. Here we prove that $\widehat\kappa_L$, once suitably rescaled, converges (pointwise) on $(-1,1)$ to a limit function $\widehat \kappa_\infty$, which is not the covariance function of a limit Gaussian field on $\S^d$ as in the low-disorder case. Indeed, we have
\begin{equation}\label{singularity-kinfty}
    \widehat \kappa_\infty(\langle x,y \rangle)\,\asymp \,d_{\S^d}(x,y)^{-\alpha}\,,
\end{equation}
where $d_{\S^d}$ denotes the geodesic distance in $\S^d$ and the order $\alpha>0$ of the singularity in the limit where $x\approx y$ depends
on the attraction/repulsion properties of the fixed points of $\kappa$. Concerning the asymptotic behaviour of the functionals, we prove in \Cref{thm:high-disorder} and \Cref{thm:sparse} (where $\alpha=2\nu$) that
\begin{equation}\label{eq:summaryourresults}
    \widetilde F_L\overset{d}{\longrightarrow}\begin{cases}
    Z\qquad&\text{ if }\alpha\ge d/Q\,,\\
    Z_Q\qquad&\text{ if }\alpha<d/Q\,;
\end{cases}\,
\end{equation}
here, $Z$ is a standard Gaussian variable, while $Z_Q$ is a non-Gaussian random variable whose law depends on the Hermite rank $Q$ and $\widehat \kappa_\infty$; these random variables are introduced in this paper for the first time, to the best of our knowledge. Moreover, we also provide precise variance rates and quantitative bounds in Wasserstein distance for these forms of convergence.

We stress that our results entail a sharp phase transition for the fluctuations of the sequence of functionals, depending entirely on $Q$ and on $\alpha$, the index of attractiveness/repulsiveness of the fixed points of $\kappa$. Our proofs hence require ideas based on the theory of fixed points and properties of real-analytic functions, see \Cref{sec: proofs,sec: auxiliary} for more details.

\subsection{Discussion: a comparison with previous results}

The determination of limit theorems for local functionals of Gaussian random processes and fields is a classical topic in probability, dating back at least to the seminal papers by Breuer, Dobrushin, Major, Rosenblatt, Taqqu, see \cite{BreuerMajor1983,DobrushinMajor1979,Rosenblatt1961,Taqqu1975,Taqqu1979}. We shall now briefly explain in what sense our focus here is novel and to what extent a comparison can be drawn with the existing literature.

We note first that the classical papers on nonlinear functionals of Gaussian fields are based on a very different notion of asymptotics rather than the one we are using here: namely large-domain or high-frequency, as we shall discuss in the two subsections below.

\subsubsection{Large-domain asymptotics on \texorpdfstring{$\Z^d$ and $\R^d$}{Z\^{}d and R\^{}d}}\label{sec:largedomain}
 The investigation of large-domain asymptotics for functionals of the form \eqref{eq:first-def-functional}
 started in the discrete setting of stationary Gaussian fields $(B(x))_{x\in \Z^d}$, with the seminal papers \cite{BreuerMajor1983,DobrushinMajor1979,Rosenblatt1961,Taqqu1975} that studied the fluctuations of
 \[
F_L:=\sum_{x\in\Z^d\cap LD}\varphi\left(B(x)\right)\,,\qquad\text{ as }L\rightarrow\infty\,,
\]
where $D$ is a compact set and $LD:=\{Lx\,:\,x\in D\}$. The continuous version of the problem consists of considering stationary Gaussian fields $(B(x))_{x\in \R^d}$ and functionals of the form
\[
F_L=\int_{LD}\varphi(B(x))\,dx\,, \qquad\text{ as }L\rightarrow\infty\,.
\]
Fluctuations in the continuous case were first investigated in \cite{Taqqu1979} and by following authors, to the point of becoming today a classical topic in probability; see, e.g., \cite{Arcones1994,LO14,MainiNourdin2024} and  \cite{LMNP24,MRZ25,MRV21,NT20} for many generalizations and extensions.

Central and non-central limit theorems have been proved under different conditions: integrability of the covariance function (introduced in the seminal paper by Breuer and Major \cite{BreuerMajor1983}); precise asymptotic conditions for the covariance function (see \cite{DobrushinMajor1979,GMT24,LMNP24,Maini2024}); or, more recently, conditions on the singularities of the spectral measure of $B$, see, e.g. \cite{Gass25,ILRS13,MainiNourdin2024}.

To simplify the exposition, assume that the underlying Gaussian field $B$ is centred, with unit variance and polynomially decaying, positive covariance function, namely
\begin{equation}
    \label{regvarassumption}
0<\Cov(B_x,B_y)=\rho(\|x-y\|)\sim \|x-y\|^{-\alpha}\,,\qquad\alpha>0\,.
\end{equation}
Under this assumption, we have a complete picture:
\begin{itemize}
    \item \textbf{(Short-range dependence).} If $z\mapsto \rho(\|z\|)\,\in\,L^Q$ ($\alpha>d/Q$), the celebrated Breuer-Major theorem (see \cite{BreuerMajor1983} and the many generalizations \cite{ADP24,CNN20,NPY19,NZ20}) yields Gaussian fluctuations.
    \item \textbf{(Critical case).} If $\alpha=d/Q$, then $z\mapsto \rho(\|z\|)\,\notin\,L^Q$, but one usually can prove Gaussian fluctuations with different techniques.
    \item \textbf{(Long-range dependence).} If $\alpha<d/Q$, then a series of seminal contributions by \cite{DobrushinMajor1979,Taqqu1975,Taqqu1979} yield non-Gaussian fluctuations for $Q\ge2$.  In particular, the limit distribution belongs to the $Q$th chaos and is of Hermite-type, see, e.g. \cite{DobrushinMajor1979,LMNP24,LO14,Taqqu1979} for different formulations and generalizations. Hermite distributions and processes have been widely studied and applied in the literature; see, e.g., \cite{ADT23,GNNS21,LNNT26,RST12}.
\end{itemize}
In a very concise way, under Assumption \eqref{eq:first-def-functional} we have
\begin{equation}\label{eq:summaryclassicalresults}
    \widetilde F_L \overset{d}{\longrightarrow}\begin{cases}
    Z\qquad&\text{ if }\alpha\ge d/Q\,,\\\mathcal{H}_{Q,\alpha}\qquad&\text{ if }\alpha<d/Q\,,
\end{cases}
\end{equation}
where $Z$ is a standard Gaussian and $\mathcal{H}_{Q,\alpha}$ has Hermite distribution depending only on $D,Q,\alpha$.

\subsubsection{High-frequency asymptotics}\label{sec:highfreq}
In the previous subsection, we reviewed some classical results in the framework of large-domain asymptotics. This framework is clearly meaningless when dealing with random fields whose domain is a fixed manifold, such as $\mathbb{S}^d$. Here, over the last 15 years, a number of different papers have considered geometric functionals under a different framework, namely, high-frequency/high-energy asymptotics. In these circumstances, one typically considers sequences of random fields corresponding to larger and larger values of some diverging parameters, typically the eigenvalues for sequences of random eigenfunctions; that is, the sequence of zero-mean, unit variance Gaussian random fields $T_{\ell}:\Omega \times \mathbb{S}^d \rightarrow \mathbb{R}$ that satisfy the following:
\begin{equation}\label{eigenfunctions}
\Delta_{\mathbb{S}^d}T_{\ell}=-\ell(\ell+d-1)T_{\ell}\ , \qquad  \, \ell \in \N
\end{equation}
where $\Delta_{\mathbb{S}^d}$ denotes the spherical Laplacian in dimension $d$, see \cite{AtkinsonHan} for more details. A number of variations of this model have been considered, for instance, by focusing on different manifolds (such as the torus) or considering weighted averages of the eigenfunctions, typically around small energy windows; for our purposes we do not need to go into details and focus only on model \eqref{eigenfunctions}. In this framework, many papers have considered the asymptotic behaviour of integrals of non-linear statistics for $T_{\ell}$, in analogy with what we do here but in the asymptotic regime (``high-energy'' or ``high-frequences'') where $\ell \rightarrow \infty$: see for instance \cite{MW_CMP2014,MR_JFA2015,Rossi_JTP2019,Todino_JMP2019} and the references therein. It is important to stress that in this framework one can obtain limiting behaviours where the asymptotic distribution is dominated by components belonging to a single chaotic components (typically the second or the fourth, $q=2$ or $q=4$), or situations where a countably infinite number of chaotic terms appears. In either circumstance, the asymptotic distribution of non-linear statistics was derived, and it was shown to be Gaussian in all cases; quantitative central limit theorems were also established. To sum up, to the best of our knowledge, there has been no natural model leading to non-Gaussian asymptotic distributions in the case of the sphere; it is of course possible to construct very easily artificial sequences which will lead to non-Gaussianity, but none which has emerged from principled motivations.

\subsubsection{Our framework: deep asymptotics}
The asymptotic framework considered in this paper does not fall into either of the two categories described above. Indeed, we study sequences of random fields defined over a $\mathbb{S}^d$, as in the high-energy asymptotic regime; however, the governing parameter is the number $L$ of iterations appearing in the covariance function $\kappa_{L}$.
This novel setting leads to both central and non-central limit theorems for functionals of Gaussian fields on the sphere. The non-Gaussian behaviour under this form of deep asymptotics differs significantly from that observed in the large-domain case; as noted earlier, non-Gaussian limits have not been observed so far in the high-frequency asymptotic setting.

Given the different nature of the asymptotics involved, it may seem meaningless to compare the results presented in \Cref{sec:informalmainresults} with the previous literature discussed in \Cref{sec:largedomain,sec:highfreq}. Nevertheless, in the following, we argue that a surprising and subtle parallelism can be identified between our results in the high-disorder and sparse regimes \eqref{eq:summaryourresults} and the classical results obtained in the large-domain setting \eqref{eq:summaryclassicalresults}.\\

Let us fix a centred, stationary Gaussian field $B=(B(x))_{x\in\R^d}$ with unit variance and assume \eqref{regvarassumption}.
Observe that, after the change of variable $x = Lx'$, addressing \Cref{question1-1} for functionals defined on the increasing domain $LD$ is equivalent to considering a fixed domain $D \subseteq \mathbb{R}^d$ and a sequence of Gaussian fields $(B_{Lx})_{x \in \mathbb{R}^d}$ with covariance $\rho(L\cdot)$. The corresponding sequence of functionals to be studied is as follows.
\[
F_L(D)=\int_D \varphi(B_{Lx})\,dx\qquad L\rightarrow\infty\,.
\]
Under Assumption \eqref{regvarassumption}, we have the following.
\[
\frac{\rho(L\|x-y\|)}{\rho(L)}\rightarrow \|x-y\|^{-\alpha}=:\rho_\infty(\|x-y\|)\,.
\]
On the other hand, if $(\widehat T_L(x))_{x\in\S^d}$ is the sequence of Gaussian fields considered in  \Cref{sec:informalmainresults}, we saw in~\Cref{informalHDeS} that its covariance function $\widehat \kappa_L$, once properly rescaled, converges to a limit function $\widehat \kappa_\infty$ satisfying
\eqref{singularity-kinfty}. This suggests an interesting analogy: in both cases, when we have a limit function ($\rho_\infty$ for large domains or $\widehat \kappa_\infty$ for deep NNGFs), the order $\alpha$
of the singularity of the limit function in $x\approx y$ determines the asymptotic distribution of fluctuations. Note, however, that the Hermite limit in \eqref{eq:summaryclassicalresults} and the non-Gaussian limit \eqref{eq:summaryourresults} are in general different, as their construction is based on different asymptotic aspects of the covariance function: the polynomial decay in the first case, and the fixed-point asymptotic dynamics in the second case.

To conclude, we observe another interesting parallelism, related to the short/long-range dependence, see again Subsection \ref{sec:largedomain}. In both cases, taking two disjoint compact sets $D_1,D_2$ (in $\R^d$ or $\S^d$), one could prove (see, e.g. \cite{Maini2024} for large-domain asymptotics and arguing as in \Cref{thm:variance chaos-iperuranio} for deep asymptotics) that the functionals $F_L(D_1)$, $F_L(D_2)$ are asymptotically uncorrelated (short-range dependence) if $\alpha\ge d/Q$  and asymptotically correlated (long-range dependence) if $\alpha<d/Q$. These analogies suggest the presence of a general structure that may emerge when limiting functions and their singularities are involved. However, the analysis of this general framework is beyond the scope of the present paper.


\subsection{Plan of the paper.}
The remainder of the paper is organized as follows.
In \Cref{sec::background} we collect the necessary background on neural networks, spectral representations in $\S^d$, chaotic decompositions and Malliavin-Stein method. Our main results are presented in \Cref{sec:main results} and proved in \Cref{sec: proofs}.
All remaining proofs, as well as auxiliary and technical results, are gathered in \Cref{sec: auxiliary}.


\section{Background}\label{sec::background}

In the sequel, all random variables are defined on a common probability space $(\Omega,\mathcal F,\mathbb P)$. We denote by $Z$ a standard Gaussian random variable, and by $\mathcal N(0,1)$ its law. The notation $\overset{d}{\rightarrow}$ stands for convergence in distribution, while $\sim$ denotes equality in distribution.
Throughout the paper, $\S^d$ denotes the $d$-dimensional unit sphere in $\mathbb R^{d+1}$, endowed with the uniform measure $\mathrm{d}x$.
Moreover, given two functions $f_n$ ad $g_n$ defined on $I\subseteq R$, we write $f_n(x)\lesssim g_n(x)$ for $x\in I$ if there exist constants $C>0$ and $N\in\mathbb N$ such that $f_n(x) \leq C g_n(x)$ for all $n \geq N$ and for all $x\in I$. Finally, we write $f_n(x) \asymp g_n(x)$ if both $f_n(x) \lesssim g_n(x)$ and $g_n(x)\lesssim f_n(x)$ hold.

We begin by recalling basic notions on deep neural networks and the Gaussian random fields that arise in the infinite-width limit.
We then review the spectral representation of isotropic Gaussian fields on $\S^d$ and introduce chaotic decompositions and Wiener-It\^o integrals, together with Malliavin-Stein bounds that will be used throughout the paper.

\subsection{Random Neural Networks}\label{sec:NN}

\noindent
Fix a depth $L \in \mathbb{N}$. Consider a fully connected random neural network
with layer widths $n_1,\dots,n_L$, activation function $\sigma:\mathbb{R}\to\mathbb{R}$. We associate with this architecture the random field $$T_L:\S^d \to \R^{n_{L+1}}\,,$$
defined recursively as follows. For every $x\in\S^d$, set
\[
T_s(x)=
\begin{cases}
W^{(0)}x + b^{(1)}, & s=0\,,\\[4pt]
W^{(s)}\,\sigma\!\big(T_{s-1}(x)\big) + b^{(s+1)}, & s=1,\dots,L\,,
\end{cases}
\]
where the activation function $\sigma$ is applied component-wise.  The weights and biases $\{(W^{(s)},b^{(s)})\}_{s\ge0}$ are assumed to be Gaussian independent random elements with suitable variances\footnote{The variances normalization ensures that the variance of the pre-activations remains constant across layers, which is crucial for the large-width asymptotics} (see \cite{nostro} for more details). In the remainder of the paper, we drop the bias terms $b^{(s)}$ for simplicity.

It is by now classical (see~\cite{pmlr-v97-hanin19a,hanin2,basteri2024quantitative,favaro2025quantitative,celli1,celli2} and references therein) that, as $n_1,\dots,n_L\to\infty$, the network output $T_L$ converges in distribution to an isotropic centred Gaussian random field whose $n_{L+1}$ components are independent and identically distributed. In particular, by~\cite{nostro}, if $K_L$ denotes the covariance function of one component of the limiting field, then
\begin{equation*}
    K_L(x,y)
    = \kappa_L(\langle x,y\rangle)
    := \underbrace{\kappa\circ \cdots \circ \kappa}_{L \text{ times}}
    (\langle x,y\rangle),
\end{equation*}
where the kernel $\kappa:[-1,1]\to\mathbb{R}$ is given by
\begin{equation}\label{eq:KL1}
    \kappa(u)
    := C\,
    \mathbb{E}\!\left[
        \sigma(Z_1)\sigma\!\big(uZ_1+\sqrt{1-u^2}\,Z_2\big)
    \right],
\end{equation}
 $Z_1,Z_2\overset{\mathrm{i.i.d.}}\sim \mathcal{N}(0,1)$ and $C$ is a constant that ensures $\kappa(1) =1$. \eqref{eq:KL1} yields important properties for $\kappa$, see \ref{p0}--\ref{p3} at the beginning of \Cref{sec: proofs}.\\
Hence, from now on $T_L=(T_L(x))_{x\in\S^d}$ will denote a centred, isotropic real-valued Gaussian field with covariance function $\kappa_L$. We refer to $T_L$ as a \emph{neural network Gaussian field} (NNGF) of depth $L$.

\subsection{Spectral representations on \texorpdfstring{$\S^d$}{S\^{}d}}\label{sec: spec rep}
Let $T=(T(x))_{x\in \S^d}$ be a centred isotropic Gaussian field on $\S^d$ with unit variance; as usual, we assume that $T:\Omega \times \S^d  \rightarrow \R$, is a jointly measurable map. We now briefly introduce spectral representations on $\S^d$ for $T$; for more details, see, e.g. \cite{AtkinsonHan,marinucci2011random,yadrenko1983spectral}.

\medskip\noindent
It is well-known that $T$ admits the following spectral representation:
\begin{equation}
	\label{eq: spherical spec}
	T(x,\omega) = \sqrt{C_0} \zeta_{00}(\omega)+ \sum_{(\ell,m)\in E } \sqrt{C_\ell}\,\zeta_{\ell m }(\omega) \,Y_{\ell m}(x), \qquad x\in \S^d, \; \omega\in \Omega
\end{equation}
where the above decomposition holds in  $L^2(\Omega \times \S^d, \mathbb P\otimes \di x )$, with:
\begin{itemize}
\item $E = \{ (\ell,m)\; | \; \ell \in \N\setminus\{0\}, \; m = 1, \dots, n_{\ell,d}\}$.
\item $\{ Y_{\ell m}\}_{(\ell,m)}$ orthonormal basis of spherical harmonics (Laplace-Beltrami eigenfunctions on $\S^d$).
\item $n_{\ell,d}$ the
	cardinalities
	of the eigenspaces spanned by $\{ Y_{\ell m}\}_m$, satisfying $n_{\ell,d}\asymp \ell^{d-1}$.
	\item $(\zeta_{\ell m})$  $\overset{\mathrm{ i.i.d.}}{\sim}\mathcal N(0,1)$ and $(C_{\ell})_\ell$ the angular power spectrum of $T$\footnote{To simplify the proofs later, we change here the standard notation, where $\sqrt{C_{\ell}}\zeta_{\ell m}$ is replaced by $a_{\ell m}\sim \mathcal{N}(0,C_{\ell})$.}.

\end{itemize}
\medskip
\noindent
It is well-known that the power spectrum $(C_\ell)_\ell$ uniquely determines the covariance function $\kappa$ of $T$ through the following expansion
\begin{equation}\label{scho}
\kappa(\langle x,y \rangle):=\E[ T(x) T(y) ]
=C_0 +
\sum_{\ell=1}^\infty
C_\ell \frac{n_{\ell,d}}{\omega_d}
G_{\ell,d}(\langle x ,y\rangle),
\end{equation}
where $\omega_d$ denotes the Lebesgue measure of $\S^d$ and $(G_{\ell,d})_\ell$ is the sequence of normalized Gegenbauer polynomials, characterized as the unique family of polynomials with degree of $G_{\ell,d}$ equal to $\ell$, satisfying $G_{\ell,d}(1) = 1$\footnote{We normalize Gegenbauer polynomials so that $G_{\ell,d}(1)=1$ rather than using orthonormal normalization.} and orthogonal with respect to the probability measure
\begin{equation}
\label{defmu}
    \di \mu(u)=\di \mu_d(u)
=\frac{\omega_{d-1}}{\omega_d}
\mathbf{1}_{[-1,1]}(u)(1-u^2)^{d/2-1}
\,\di u\,.
\end{equation}
In particular, with such normalization, one has the addition formula   (see e.g. \cite[Theorem 2.9]{AtkinsonHan})
\begin{equation}
    \label{defGell}
    G_{\ell,d}(\langle x,y \rangle)=\frac{\omega_d}{n_{\ell,d}}\sum_{m=1}^{n_{\ell,d}}Y_{\ell m}(x)Y_{\ell m}(y)\,\qquad x,y\in\S^d.
\end{equation}
{A useful tool that will be used in the sequel is the Gegenbauer linearization formula, see e.g. \cite[(4.1)]{Koornwinder2018}, that for $\ell,\ell'\ge 1$ ensures the existence of strictly positive coefficients $a_k(\ell,\ell')$ such that
\begin{equation}
    \label{eq:linearizationformula}
    G_{\ell,d}(u)G_{\ell',d}(u)=\sum_{k=0}^{\ell\wedge\ell'}a_k(\ell,\ell')\,G_{\ell+\ell'-2k,d}(u)\,.
\end{equation}
By this result and an induction argument on $r$, we get that $\forall$ $r\ge1$, there exists a strictly positive array of deterministic weights $b_{k,r}$ such that
\[
G_{\ell,d}(u)^{2r}=\sum_{k=0}^{r\ell}b_{k,r}G_{2r\ell-2k,d}(u)\,.
\]
Since $G_{\ell,d}(u)^{2r}\in L^2(\mu)$ and all the positive coefficients in its expansion $L^2$ correspond to Gegenbauer polynomials $G_s$ with $s=2rl-2k$ even, we get for every $r\ge1$
\begin{equation}
\label{positivity of powers of Gell}
    \int_{-1}^1 G_{\ell,d}(u)^{2r+1}\di\mu(u)>0\qquad  \iff \qquad\ell \,\,\text{ even }\,.
\end{equation}
}

\smallskip
\noindent
In the sequel, we will denote the power spectrum of the NNGF $T_L$, with covariance function $\kappa_L$, as $(C_\ell(L))_{\ell}$. Then, using ~\eqref{scho} and orthogonality, noting that $\|G_{\ell,d}\|^2_{L^2(\mu)}=1/n_{\ell,d}$, we have
\begin{equation}\label{C-orto}
\begin{aligned}
C_0(L) &= \int_{-1}^1 \kappa_L(u) \di \mu(u)\,\,, \\
C_\ell(L) &= \omega_d  \int_{-1}^1 \kappa_L(u) G_{\ell,d}(u) \di \mu(u)\,\,,\qquad \ell\ge 1\,.
\end{aligned}
\end{equation}

\smallskip
\noindent

\subsection{Chaotic decompositions, Wiener-It\^o integrals and Malliavin-Stein bounds.}

Here we collect some background material on isonormal Gaussian processes,
Wiener chaos, and Malliavin--Stein bounds; we refer to
\cite{NourdinPeccati2011} for more discussion and details.

\paragraph{Isonormal Gaussian framework and representation of the field.}
Given a real separable Hilbert space $\mathfrak H$, an isonormal Gaussian process $X=(X(h))_{h\in\mathfrak H}$ is a collection of jointly Gaussian, centred random variables
satisfying $\E[X(h)X(g)]
=
\langle h,g\rangle_{\mathfrak H}$.
In particular, consider $\mathfrak H=\ell^2(E)$, where $E$ and $(\zeta_{\ell m})_{(\ell,m)\in E}$ are as above; we will focus on the isonormal Gaussian process
\[
X(h)=\sum_{(\ell,m)\in E}h(\ell,m)\,\zeta_{\ell m}\,.
\]
An isotropic Gaussian random field $T$ with angular power spectrum $(C_\ell)_\ell$ can be naturally studied within this Gaussian framework.
For each $x\in \S^d$, we denote by $Y_\cdot(x)\sqrt{C_\cdot}\in\mathfrak H$
the element defined by $(\ell,m)\longmapsto \sqrt{C_\ell}\,Y_{\ell m}(x).$
With this notation and the decomposition~\eqref{eq: spherical spec}, $T$ satisfies
\begin{equation}
    \label{repTinX}T=\left(X\!\big(Y_\cdot(x)\sqrt{C_\cdot}\big)\right)_{{x}\in\S^d}\,.
\end{equation}
\paragraph{Hermite polynomials and chaotic decomposition.}Consider $L^2(\mathbb R,e^{-x^2/2})$ and its orthogonal basis of Hermite polynomials $(H_q)_{q\ge0}$, characterized as the unique family of monic polynomials with a degree of $H_q$ equal to $q$, satisfying
\[
\E[H_q(Z)H_p(Z)] = q!\,\delta_{qp}\,.
\]
For $q\ge1$, the $q$th Wiener chaos $\mathcal H_q$ is defined as the closed linear subspace of $L^2(\Omega)$ generated by $\{H_q(X(h)):\;h\in\mathfrak H,\ \|h\|_{\mathfrak H}=1\}$.
It is a standard fact that $L^2(\Omega,\sigma(X))$ is the direct sum of all the Wiener chaoses; in other words, every square integrable random variable $F$ which is measurable with respect to $X$ has a chaotic decomposition
\begin{equation}
    \label{eq:chaodeco background}
    F=\E[F]+\sum_{q=1}^\infty F[q]\,\,,
\end{equation}
where $F[q]$ is the $q$th chaotic component  of $F$, i.e. its projection on $\mathcal{H}_q$.
In particular, in our context, a natural orthogonal basis of $\mathcal{H}_q$ is the collection of all Wick products of order $q$
\[
\left\{\Wick{\zeta_{e_1}^{a_1}\dotsb \zeta_{e_k}^{a_k}}\,\Big|\,k\in\N\,,\,\sum_{i=1}^k a_i=q\,,\,e_i\neq e_j\,,\,e_i\in E\right\}
\]
where for any $\zeta_{e_1}, \dotsb, \zeta_{e_q}$, setting $s_e:=|\{ i \; | \; e_i = e \}|\,,$ the Wick product is defined as follows
$$\Wick{\zeta_{e_1}\dotsb \zeta_{e_k}}\; \; \; := \prod_{ e\in \{ e_1, \dots, e_q\}} H_{s_e}(\zeta_e)\quad.   $$
\paragraph{Wiener--It\^o integrals.}
We denote by $\mathfrak H^{\otimes q}=\ell^2(E^q)$ the tensor Hilbert space, generated by the simple tensors $h_1\otimes\dotsb \otimes h_q(e_1,\dots,e_q):=h_1(e_1)\dotsb  h_q(e_q)$ and endowed with the standard inner product, first defined on simple tensors
\[
\langle f_1\otimes\dotsb\otimes f_q,g_1\otimes\dotsb\otimes g_q\rangle_{\mathfrak H^{\otimes q}}=\prod_{i=1}^q\langle f_i,g_i\rangle_{\mathfrak H}\,\,\qquad f_i,g_i\in\mathfrak H\,,
\]
and then extended by linearity and closure. We also denote by $\mathfrak H^{\odot q}=\ell_s^2(E^q)$ the subspace of $\mathfrak H^{\otimes q}$ made of symmetric tensors, i.e. functions in $\ell^2(E^q)$ invariant under permutations of the $q$ variables. For $h_1,\dots,h_q\in\mathfrak H$, we denote their symmetric tensor product (well defined by associativity)
\[
h_1\odot\dots\odot h_q:=\frac{1}{q!}\sum_{\tau\in S_q}h_{\tau(1)}\odot\dotsb\odot h_{\tau(q)}\,.
\]
For $h\in\mathfrak H$ with $\|h\|_{\mathfrak H}=1$, the $q$th Wiener--It\^o integral is defined by
\begin{equation}\label{defIq}
I_q(h^{\otimes q})=H_q(X(h))\,\qquad .
\end{equation}
By density\footnote{Using polarization identities, the set $\Span\{h^{\otimes q}\,,\,h\in\mathfrak H\,,\,\|h\|_{\mathfrak H}=1\}$ is dense in $\mathfrak H^{\odot q}$.}
$I_q$ extends uniquely to a linear isometry $I_q:\mathfrak H^{\odot q}\to\mathcal H_q$, satisfying
\[
\E[I_q(f)\,I_p(g)]=q!\langle f,g\rangle_{\mathfrak H^{\otimes q}}\,\delta_{pq}\,.
\]
By product formula (see e.g. \cite[(2.7.9)]{NourdinPeccati2011}), denoting by $\delta_e$ the Dirac function at $e$, we also have
\begin{equation}
\label{Wick Wiener-Ito}
    I_q(\delta_{e_1}\odot \dotsb \odot \delta_{e_q})=\Wick{\zeta_{e_1} \dotsb  \zeta_{e_q}}\,\qquad.
\end{equation}
\paragraph{Representation of the chaotic components of the functionals.}
In what follows, we will focus on the chaotic decomposition of functionals of $T$ of the form
\begin{equation}\label{chaotic-noL}
F=\int_{\S^d}\varphi(T(x))\,\di x\,\,,
\end{equation}
with $\varphi\in L^2(\R,e^{-x^2/2})$ that can be expanded in the orthogonal basis of Hermite polynomials
\[
\varphi(x)=\sum_{q=0}^\infty \varphi_q H_q(x)\,,\,\,\qquad\varphi_q:=\frac{1}{q!}\E[\varphi(Z)H_q(Z)]\,.
\]
By standard arguments (see e.g. \cite{NourdinPeccati2011}), $F$ has the following chaotic decomposition
\begin{equation}\label{214star}
F=\E[F]+\sum_{q=1}^\infty F[q]\,\,,\qquad F[q]=\varphi_q\,\int_{\S^d}H_q(T(x))\,dx\,.
\end{equation}
Moreover, by \eqref{repTinX}--\eqref{defIq} and the properties of $I_q$, $F[q]\in\mathcal{H}_q$ can be written as a Wiener-It\^o integral
\begin{equation}\label{Iq-Hq}
F[q]=\varphi_q\int_{\S^d}H_q(T(x))\,\di x
=\varphi_q\int_{\S^d}I_q(\sqrt{C_\cdot}^{\otimes q} {Y_\cdot(x)}^{\otimes q})\,\di x=
I_q\!\left(\varphi_q\,\mathcal G_q\,\sqrt {C}^{\otimes q}\right)\,,
\end{equation}
where $\mathcal G_q$ denotes the generalized Gaunt integral (see \cite[Section 3.5.2]{marinucci2011random})
\begin{equation}
    \label{defGaunt}
\mathcal G_q(e_1,\dots,e_q)
:=
\int_{\S^d}Y_{e_1}(x)\cdots Y_{e_q}(x)\,\di x\,.
\end{equation}
\paragraph{Malliavin-Stein bounds.}
In the proofs of our quantitative central limit theorems, we use the following result, which was proved in \cite{MRZ25} using the Malliavin-Stein method introduced in the seminal paper~\cite{Nourdin2009}.
In the sequel, $\dW$ will denote the Wasserstein distance, defined as follows
\[
\dW(F,G) = \sup\{|\E[h(F)]-\E[h(G)]|\,: \,h:\R\rightarrow\R\,\,,\,\, \|h\|_{\textrm{ Lipschitz}}\le 1 \}\,\,.
\]
\begin{proposition}[Proposition 2.7 on \cite{MRZ25}]\label{prop:malliavinstein}
	Let $Y$ be in $L^2(\Omega)$ with  finite chaotic decomposition
    \begin{equation}
    \label{finite Y deco}
    Y=\sum_{q=0}^N I_q(f_q)\,,\quad \quad N\in\N\,.
\end{equation}
where $f_q=c_q\mathcal{G}_q\sqrt{C_\cdot}^{\otimes q}\in\mathfrak{H}^{\odot q}$ for some coefficients $c_q$ such that $\Var(Y)=1$. Then, we have
	\begin{align}
    &\dW(Y, Z) \leq
	4N\sum_{q=1}^N
	3^{2q} q! \mathcal{M}_{q}\,\,,\nonumber\\
    \label{def_MP}
		\mathcal{M}_{q}^2 = \max_{1\leq r \leq q-1}  c_q^4\int_{(\S^d)^4}\kappa^r&(\langle x,y \rangle)\,\kappa^r(\langle z,w \rangle)\,\kappa^{q-r}(\langle x,z \rangle)\,\kappa^{q-r}(\langle y,w \rangle)\,dx\,dy\,dz\,dw\,.
	\end{align}
\end{proposition}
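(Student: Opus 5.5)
The plan is to run the standard Malliavin--Stein argument for a finite sum of chaoses, and then compute contraction norms explicitly using the structure $f_q=c_q\mathcal G_q\sqrt{C_\cdot}^{\otimes q}$ and the addition formula \eqref{defGell}. First, the constant term $I_0(f_0)$ only shifts the mean. Since the relevant quantities are centred, I will take $\E[Y]=0$ and handle any mean shift separately. By \cite[Theorem 5.1.3]{NourdinPeccati2011},
\[
\dW(Y,Z)\le \E\bigl|1-\langle DY,-DL^{-1}Y\rangle_{\mathfrak H}\bigr|\le \sqrt{\Var\bigl(\langle DY,-DL^{-1}Y\rangle_{\mathfrak H}\bigr)},
\]
where the second inequality uses $\E\langle DY,-DL^{-1}Y\rangle_{\mathfrak H}=\Var(Y)=1$. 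Writing $Y=\sum_q I_q(f_q)$, I then have $\langle DY,-DL^{-1}Y\rangle=\sum_{p,q}p^{-1}\langle DI_p(f_p),DI_q(f_q)\rangle$. Each summand is expanded with the product formula \cite[(2.7.9)]{NourdinPeccati2011} into Wiener--It\^o integrals of contractions $f_p\widetilde\otimes_r f_q$. This yields the classical bound \cite[Lemma 5.2.4 and Theorem 6.3.1]{NourdinPeccati2011}:
\[
\sqrt{\Var\langle DY,-DL^{-1}Y\rangle}\;\le\;\sum_{p,q\le N}C(p,q)\max_{r}\|f_p\otimes_r f_q\|_{\mathfrak H^{\otimes(p+q-2r)}}.
\]
The maximum runs over $1\le r\le p\wedge q$, excluding $r=p=q$. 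Here $C(p,q)$ is a combinatorial constant involving $r!\binom{p-1}{r-1}\binom{q-1}{r-1}$ and factorials. I will crudely bound all binomial coefficients by $2^{q}\cdot 2^q\le 3^{2q}$ and the number of pairs by $N$, which produces the prefactor $4N\sum_q 3^{2q}q!$.

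Second, I reduce the mixed contractions ($p\neq q$, or $r=p\wedge q$) to the diagonal ones via the inequality
\[
\|f_p\otimes_r f_q\|^2\le \|f_p\otimes_{p-r}f_p\|\,\|f_q\otimes_{q-r}f_q\|,
\]
which holds for $r<p\wedge q$. The boundary case $r=p<q$ is handled by $\|f_p\otimes_p f_q\|^2\le\|f_p\|^2\|f_q\otimes_{q-p}f_q\|$, together with $q!\|f_q\|^2\le\Var(Y)=1$. After this step, everything is controlled by $\max_{1\le r\le q-1}\|f_q\otimes_r f_q\|$, and the AM--GM inequality lets me sum these into $\sum_q \mathcal M_q$-type terms.

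Third, I compute $\|f_q\otimes_r f_q\|^2$ explicitly. By definition of the Gaunt integral \eqref{defGaunt}, $f_q(e_1,\dots,e_q)=c_q\int_{\S^d}\prod_i\sqrt{C_{e_i}}Y_{e_i}(x)\,\di x$. Contracting $r$ indices and exchanging the sum with the integrals, each contracted index contributes
\[
\sum_{e}C_eY_e(x)Y_e(y)=\kappa(\langle x,y\rangle),
\]
by \eqref{scho}--\eqref{defGell}. Hence
\[
f_q\otimes_r f_q=c_q^2\int\!\!\int\kappa(\langle x,y\rangle)^r\,\bigl(\sqrt{C_\cdot}Y_\cdot(x)\bigr)^{\otimes(q-r)}\otimes\bigl(\sqrt{C_\cdot}Y_\cdot(y)\bigr)^{\otimes(q-r)}\,\di x\,\di y.
\]
Taking the squared norm and applying the addition formula once more to the $2(q-r)$ remaining indices gives exactly the fourfold integral in \eqref{def_MP}, so that $\|f_q\otimes_r f_q\|^2\le\mathcal M_q^2$.

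I expect the main obstacle to be justifying the exchange of the infinite sums over $E$ with the integrals. I would first try to settle this with the uniform convergence of \eqref{scho}: since $\kappa$ is continuous and positive definite, Mercer's theorem applies, and Fubini is legitimate because $|\kappa|\le1$ on a compact domain. Keeping track of the constants so that they collapse to $4N\cdot3^{2q}q!$ is routine bookkeeping.
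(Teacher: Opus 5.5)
\textbf{Verdict: genuine gap in Step 2.} Your Steps 1 and 3 are fine. The Stein--Malliavin reduction is standard. The identity $\|f_q\otimes_r f_q\|^2=c_q^4\int_{(\S^d)^4}\kappa^r\kappa^r\kappa^{q-r}\kappa^{q-r}$ is also correct, provided $\kappa$ is read as $\sum_{e\in E}C_eY_e(x)Y_e(y)$, i.e.\ without the $\ell=0$ term that \eqref{scho} contains.

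The gap is in Step 2, at the boundary contractions $r=p<q$. There the only available estimate is
\[
\|f_p\otimes_p f_q\|\le \|f_p\|\,\|f_q\otimes_{q-p}f_q\|^{1/2}\le (p!)^{-1/2}\,\mathcal M_q^{1/2},
\]
which is a \emph{square root} of $\mathcal M_q$. The factor $\|f_p\|$ is of order one: it is only bounded through $\Var(Y)=1$, not small. So no AM--GM step converts $\|f_p\|\,\mathcal M_q^{1/2}$ into a multiple of $\mathcal M_p+\mathcal M_q$. (For $r<p\wedge q$ your AM--GM step does work.) What your argument actually proves is therefore a bound of the form $\sum_q C_q\bigl(\mathcal M_q+\mathcal M_q^{1/2}\bigr)$, or one that keeps the mixed norms $\|f_p\otimes_r f_q\|$. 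It does not give the linear bound claimed.

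\textbf{The loss cannot be removed, even for the structured kernels in the statement.} For $f_q=c_q\mathcal G_q\sqrt{C_\cdot}^{\otimes q}$ one computes $\|f_p\otimes_p f_q\|^2=c_p^2c_q^2\,\omega_d^4\,m_p^2\,m_{q-p}$, where $m_k:=\int_{-1}^1\kappa^k\,\di\mu$.

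Take $d=2$ and $T$ a random spherical harmonic of even degree $\ell$. Then $\kappa=P_\ell$, $m_2=(2\ell+1)^{-1}$ and $m_4\asymp \ell^{-2}\log\ell$. Let $Y=c_2\int H_2(T)+c_4\int H_4(T)$, with each chaos of variance $1/2$. Then $\mathcal M_2\asymp\ell^{-1/2}$ and $\mathcal M_4\asymp(\log\ell)^{-1}$, the latter coming from $r=2$.

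However, $\|f_2\otimes_2 f_4\|\asymp m_2/\sqrt{m_4}\asymp(\log\ell)^{-1/2}$, and by the diagram formula
\[
|\E[Y^3]|=3\bigl|\E[I_2(f_2)^2I_4(f_4)]\bigr|+O\bigl((\log\ell)^{-1}\bigr)=\tfrac{3\sqrt3}{2}\,\tfrac{m_2}{\sqrt{m_4}}+O\bigl((\log\ell)^{-1}\bigr)\asymp(\log\ell)^{-1/2}.
\]
Since $Y$ lives in a finite sum of chaoses, hypercontractivity together with an optimal coupling gives $|\E[Y^3]|\lesssim \dW(Y,Z)^{\theta}$ for every $\theta<1$. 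Hence $\dW(Y,Z)\gg(\log\ell)^{-1}\asymp\sum_q\mathcal M_q$, and the displayed inequality fails for large $\ell$.

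So Step 2 cannot be closed by any argument. A correct statement must retain the cross contractions $\|f_p\otimes_r f_q\|$, or equivalently carry $\mathcal M_q^{1/2}$. This is consistent with the paper's remark that \cite{MRZ25} uses the maximum over all contraction norms.

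\textbf{Two smaller points.}
\begin{itemize}
\item The term $I_0(f_0)$ cannot be ``handled separately''. We have $\dW(Y,Z)\ge|\E Y|$, while the right-hand side does not depend on $c_0$, so the statement needs $c_0=0$.
\item When only one chaos is present ($c_q=0$ for $q\neq Q$, as in the critical CLT) there are no cross terms, and your Steps 1 and 3 do yield the linear bound.
\end{itemize}
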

\begin{remark}
    The previous proposition is given in \cite{MRZ25} for general $f_q$ and replacing $\mathcal{M}_{q}$ with the maximum over all the contractions' norms. For the sake of brevity, we preferred here to avoid the definition of contractions and replace their norms with their explicit expressions when $f_q=c_q\mathcal{G}_q\sqrt{C^{\otimes q}}$, which has already been computed several times in the literature, see e.g. \cite{MR_JFA2015} and the references therein.
\end{remark}
\section{Main results}
\label{sec:main results}
Recall the definition of the NNGF $T_L$ with covariance function $\kappa_L$ in  \Cref{sec:NN} and of its power spectrum $(C_\ell(L))$ in  \Cref{sec: spec rep}. Our objects of interest are the functionals $F_L$ and their normalizations $\widetilde F_L$, defined as
\begin{equation}
    \label{ourfunc}
    \widetilde F_L := \frac{F_L - \E[ F_L]}{\sqrt{\Var(F_L)}},
\qquad
F_L:=\int_{\S^d}\varphi(\widehat T_L(x))\, \di x\,,
\end{equation}
where $\varphi\in L^2(\R,e^{-x^2/2})$ is non-linear and
\begin{equation}\label{eq:THat}
\widehat T_L(x)
:=
\frac{T_L(x)-\sqrt{C_0(L)}\zeta_{00}}
{\sqrt{1-C_0(L))}}\,\,
\end{equation}
is a centred, isotropic Gaussian field with unit variance and covariance function
\begin{equation}
    \label{defkappahatL}
    \widehat\kappa_L(u):=\frac{\kappa_L(u)-C_0(L)}
{1-C_0(L))}\,\qquad u\in[-1,1]\,\,.
\end{equation}
Our goal is to answer \Cref{question1-1}, i.e., understand the limit in distribution of $\widetilde F_L$ as $L\rightarrow\infty$, in the three regimes discovered in \cite{nostro}:
\begin{itemize}
\item \textbf{Low-disorder:} $\kappa'(1)<1$, see \Cref{thm:low-disorder};
\item \textbf{High-disorder:} $\kappa'(1)>1$, see \Cref{thm:high-disorder};
\item \textbf{Sparse:} $\kappa'(1)=1$, see \Cref{thm:sparse}.
\end{itemize}
\noindent
These regimes are reminiscent of the dynamics appearing in the study of attractive hyperbolic (low-disorder), non-hyperbolic (sparse), and repulsive hyperbolic (high-disorder) fixed points, see~\cite{fixed-point3,fixed-point1,fixed-point2} for more details on fixed point. As we shall see, different types of fixed points of $\kappa$ lead to different asymptotic behaviours of the $L$-fold iterated covariance $\kappa_L$, and hence to distinct limiting distributions for the sequence of functionals $\widetilde F_L$.
\begin{remark}
 The normalization \eqref{eq:THat} removes the integral mean
    \begin{equation*}
        \sqrt{C_0(L)}\,\zeta_{00}:= \int_{\S^d} T_L(x) \,\di x\,,
    \end{equation*}
    namely, the (random) sample average over the sphere, which plays an important role in the  behaviour described by the three regimes. In the low-disorder and sparse regimes, \(T_L-\sqrt{C_0(L)}\,\zeta_{00}\) was shown in \cite{nostro} to converge in \(L^2\) to $0$. \(\widehat T_L\) is the natural object to consider if one wants to study the spatially varying component of the field; this is natural in all three regimes: in the low-disorder and sparse cases, it removes the asymptotically dominant constant mode, while in the high-disorder case it singles out the proper spatial fluctuations.
    Note that normalization ensures that $\widehat T_L$ is centred, isotropic, and with unit variance.
\end{remark}

\medskip
\noindent
From now on, we will always consider the following assumption.

\begin{assumption}\label{ass::CRI} There exists $\gamma>1$ and a constant $c$ such that, as $u\to 1^-$,
    \begin{equation}\label{CRI}
1- \kappa(u)
=
(1-u)
-
c (1-u)^\gamma
+
o((1-u)^\gamma).
\end{equation}
\end{assumption}

\begin{remark}
The condition on $\kappa$ is fairly standard (see, e.g.,~\cite{bietti2021deep,nostro2,dilillo2025criticalpointsrandomneural,dilillo2026largedeviationprinciplesfunctional}) and is implied by suitable spectral decay properties.
In particular, it ensures that the field is almost surely of class $C^1$ (see, for example,~\cite[Proposition 3.16]{nostro2}). This assumption is of course related to the activation function $\sigma$ from which the NNGF $T_L$ is derived; basically for all activations
$\sigma$ commonly adopted in applications it is satisfied, see e.g. \cite[Remark 3.8]{nostro2}.
\end{remark}

\smallskip
\noindent

We can now state the first main result of the paper.

\begin{theorem}[Low-disorder regime]\label{thm:low-disorder}
Under \Cref{ass::CRI} and $\kappa'(1)<1$, we find that $\widehat T_L$ converges to a Gaussian field $\widehat{T}_\infty$ in $L^2(\Omega\times \S^d)$ and
\[
F_L
\overset{d}{\longrightarrow}
F_\infty
:=
\int_{\S^d}\varphi\big(\widehat T_\infty(x)\big)\,\di x.
\]
\end{theorem}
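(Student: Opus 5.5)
The plan is to prove the statement in three stages: (i) a Koenigs-type linearization of the iterates $\kappa_L$ at the attracting fixed point $u=1$; (ii) convergence of the normalized kernels $\widehat\kappa_L$, and hence of the power spectra, which yields the $L^2(\Omega\times\S^d)$ convergence; (iii) transfer of that convergence to the functionals through their Hermite expansion.

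\textbf{Stage (i): linearization at the fixed point.} Set $a:=\kappa'(1)\in(0,1)$. From the properties of $\kappa$ coming from \eqref{eq:KL1}, I expect that in the low-disorder regime $u=1$ is the unique fixed point of $\kappa$ on $[-1,1]$ and is attracting, so that $\kappa_L\to1$ uniformly on $[-1,1]$. Near $1$, the regularity in \Cref{ass::CRI} (read with slope $a$ at $1$) gives
\[
\frac{1-\kappa(v)}{a(1-v)}=1+O\big((1-v)^{\gamma-1}\big),\qquad v\to1^- .
\]
Applying this with $v=\kappa_k(u)$, where $1-\kappa_k(u)$ decays geometrically in $k$, the telescoping product
\[
\frac{1-\kappa_L(u)}{a^L}=(1-u)\prod_{k=0}^{L-1}\frac{1-\kappa_{k+1}(u)}{a\,(1-\kappa_k(u))}
\]
converges uniformly in $u$ to a continuous function $h(u)\ge0$, with $h(u)=0$ only at $u=1$. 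I expect this step to be the main obstacle. The delicate points are uniformity on the whole interval, especially near $u=-1$ where $\kappa_k(u)$ may take several steps to enter the linearization neighbourhood, and the case $\kappa(u)=-1$. I would handle them by first fixing a finite number $k_0$ of iterations that brings every point into a small neighbourhood of $1$, and only then linearizing.

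\textbf{Stage (ii): limit covariance and $L^2$ convergence.} By \eqref{C-orto},
\[
1-C_0(L)=\int_{-1}^1(1-\kappa_L)\,\di\mu=a^L\Big(\int_{-1}^1 h\,\di\mu+o(1)\Big).
\]
Hence \eqref{defkappahatL} gives
\[
\widehat\kappa_L(u)=1-\frac{1-\kappa_L(u)}{1-C_0(L)}\longrightarrow \widehat\kappa_\infty(u):=1-\frac{h(u)}{\int_{-1}^1 h\,\di\mu}
\]
uniformly on $[-1,1]$. It follows that the normalized spectra $\widehat C_\ell(L)=C_\ell(L)/(1-C_0(L))$ converge for each $\ell\ge1$ to nonnegative limits $\widehat C_\ell(\infty)$, the Gegenbauer coefficients of $\widehat\kappa_\infty$. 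Moreover
\[
\sum_{\ell\ge1}\widehat C_\ell(L)\frac{n_{\ell,d}}{\omega_d}=1=\sum_{\ell\ge1}\widehat C_\ell(\infty)\frac{n_{\ell,d}}{\omega_d},
\]
the second identity because $\widehat\kappa_\infty$ is continuous with $\widehat\kappa_\infty(1)=1$, so Schoenberg's expansion converges at $u=1$. I then define $\widehat T_\infty$ on the same probability space through \eqref{eq: spherical spec}, using the same $\zeta_{\ell m}$ and the coefficients $\widehat C_\ell(\infty)$. This gives
\[
\E\int_{\S^d}\big|\widehat T_L-\widehat T_\infty\big|^2\di x=\sum_{\ell\ge1} n_{\ell,d}\Big(\sqrt{\widehat C_\ell(L)}-\sqrt{\widehat C_\ell(\infty)}\Big)^2\le\sum_{\ell\ge1} n_{\ell,d}\,\big|\widehat C_\ell(L)-\widehat C_\ell(\infty)\big|.
\]
The right-hand side tends to $0$ by Scheffé's lemma, since the coefficients converge termwise and the weighted totals are equal.

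\textbf{Stage (iii): the functionals.} For each $x$, the pair $(\widehat T_L(x),\widehat T_\infty(x))$ is a centred Gaussian vector with unit variances. By isotropy its correlation
\[
\rho_L=\sum_{\ell\ge1}\frac{n_{\ell,d}}{\omega_d}\sqrt{\widehat C_\ell(L)\,\widehat C_\ell(\infty)}
\]
does not depend on $x$, and Stage (ii) gives $\rho_L\to1$. By the Hermite expansion of $\varphi$ and the Mehler formula,
\[
\E\big|\varphi(\widehat T_L(x))-\varphi(\widehat T_\infty(x))\big|^2=2\sum_{q\ge1}\varphi_q^2\,q!\,(1-\rho_L^q),
\]
which tends to $0$ by dominated convergence, since $\sum_q\varphi_q^2q!<\infty$. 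Integrating over $\S^d$ and applying Cauchy--Schwarz gives $F_L\to F_\infty$ in $L^2(\Omega)$, hence in distribution.
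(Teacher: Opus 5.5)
Your proof is correct. It differs from the paper's mainly in how you obtain the $L^2(\Omega\times\S^d)$ convergence of the fields.

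The paper takes the linearization \Cref{lem:tecnical} from earlier work: pointwise convergence and negativity, plus monotonicity in $L$, which gives uniformity by Dini. It then dominates the normalized spectrum by $g_L(\ell)\lesssim 1/(n_{\ell,d}\ell^2)$ uniformly in $L$. This bound comes from the external estimate $\E[X_L^2]=O(\kappa'(1)^L)$ of \cite{nostro} together with Markov's inequality, and then dominated convergence applies to the series.

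You prove the linearization yourself through the telescoping product. You replace the spectral tail bound by conservation of mass: $\widehat\kappa_\infty$ is continuous, positive definite and equal to $1$ at $u=1$, so by Schoenberg its normalized spectrum has total mass $1$. Scheff\'e's lemma then upgrades termwise convergence to $\ell^1$ convergence.

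What each route buys:
\begin{itemize}
\item Yours is self-contained. It isolates a general principle: uniform convergence of unit-variance isotropic covariances to a continuous limit already forces $L^2$ convergence of the coupled fields.
\item The paper's route rests on the second-moment estimate. Combined with $1-C_0(L)\asymp\kappa'(1)^L$, that estimate bounds $\sum_\ell \ell^2 g_L(\ell)n_{\ell,d}$ uniformly in $L$. By Fatou this gives a finite second spectral moment, hence regularity information, for $\widehat T_\infty$; your argument does not provide this.
\end{itemize}
Your Stage (iii) is the same as the paper's.

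Three small points on Stage (i):
\begin{itemize}
\item The uniformity you flag as the main obstacle is automatic. By (P2), $|\kappa'|\le\kappa'(1)=a<1$ on $[-1,1]$, so $0\le 1-\kappa_k(u)\le 2a^k$ uniformly in $u$ from the first step. No preliminary $k_0$ iterations are needed.
\item Write the factors as $r(\kappa_k(u))$ with $r(v)=\frac{1-\kappa(v)}{a(1-v)}$ and $r(1)=1$. This $r$ is continuous with values in $[0,1]$. The ratio $\frac{1-\kappa_{k+1}(u)}{a(1-\kappa_k(u))}$ as you wrote it becomes $0/0$ when $\kappa$ is even and $u=-1$. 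Even kernels do occur in this regime, e.g. $\kappa(u)=0.7+0.3u^2$.
\item In that even case $h(-1)=0$, so $h$ does not vanish only at $u=1$. This is harmless, since you only use $h>0$ on $(-1,1)$, i.e.\ $\int h\,\di\mu>0$.
\end{itemize}
Your reading of \Cref{ass::CRI} with slope $\kappa'(1)$ at $1$ is the right one. As printed, the leading coefficient $1$ would force $\kappa'(1)=1$.
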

\noindent
In other words, \Cref{thm:low-disorder} highlights a conservative asymptotic behaviour: the limit in distribution of the sequence of functionals has the same functional structure, but evaluated in the limit field $\widehat{T}_\infty$, whose properties will be illustrated below. Let us give a sketch of the proof (again, the full argument will be discussed in~\Cref{low-dis-proof}).
\begin{proof}[Sketch of the proof of \Cref{thm:low-disorder}]
In the low-disorder regime the kernel is strictly contractive at its unique fixed point $u=1$ and the iterates $\kappa_L$ converge smoothly without developing singular behaviour.
As a consequence, the angular power spectrum stabilizes and no spectral mass accumulates at high frequencies. For every $L$, we have
\begin{equation}\label{gL}
\widehat T_L(x)
=
\sum_{(\ell,m) \in E} \sqrt{g_L(\ell,m)}\, \zeta_{\ell m} Y_{\ell m}(x)\,,\qquad
g_L(\ell,m)
:=
\mathbf{1}_{\ell\geq 1} \,
\frac{C_\ell(L)}{1-C_0(L)}\,,
\end{equation}
and in this regime the power spectrum $g_L$ of $\widehat T_L$ converges to a limiting power spectrum $g_\infty$ on $E$, which defines the Gaussian field $\widehat T_\infty$. Moreover, a dominated convergence of $g_L$ implies the convergence of the corresponding Gaussian fields.
The convergence in distribution of $F_L$ then follows directly from the chaotic expansion.
\end{proof}

\begin{remark}[On the distribution of $F_\infty$]\label{rem:distr of Finfty}
The distribution of $F_\infty$ is unknown and typically non-Gaussian. For instance, if $\varphi$ is bounded, then $F_\infty$ is itself bounded; if $\varphi=H_q$, $q\ge2$, then $F_\infty$ belongs to the $q$th Wiener chaos and hence is not Gaussian, see e.g. \cite[Corollary 5.2.11]{NourdinPeccati2011}.


\end{remark}

We now turn to the two remaining regimes. Since we assumed  $\varphi\in L^2(\R,e^{-x^2/2})$,  using the Wiener–Itô chaotic decomposition of $L^2(\Omega)$ , we obtain
\begin{equation}
\label{deco Y}
F_L-\E[F_L]
=
\sum_{q=1}^\infty F_L[q]\,\qquad F_L[q]=\varphi_q\int_{\S^d}H_q(\widehat T_L(x))\,dx
\end{equation}
and we define the Hermite rank by
\begin{equation*}\label{HH-rank}
Q
=
\inf\big\{
q\ge1 \;:\;
\
\Var(F_1[q])\neq0
\big\}.
\end{equation*}
\begin{remark}[Well-posedness of $Q$]\label{simmetria}
By symmetry arguments it is simple to see that the functionals we consider can be identically null in the high-disorder regime if and only if $\varphi$ and $\kappa$ are both odd (in the other regimes $\kappa$ cannot be  odd\footnote{We will prove that when $\kappa'(1)\le1$, $\kappa_L(x)\to 1$. If $\kappa$ is odd, $\kappa_L(-1)=-\kappa_L(1)=-1$, which yields a contradiction.}). For the sake of simplicity, from now on we rule out this case, which could be addressed by focusing on functionals computed on one hemisphere rather than the whole $\mathbb{S}^d$.
We shall show that $Q \geq 2$, 
and in particular $\Var(F_L[q]) = 0$ for every $q < Q$ and $L \geq 1$.
\end{remark}
\medskip
\noindent
In the sparse and high-disorder regimes, the limiting behaviour of the functionals $F_L$ depends on the Hermite rank $Q$ and on a parameter $\nu$ associated with the fixed-point structure of the kernel $\kappa$. In particular, a sharp phase transition between Gaussian and non-Gaussian fluctuations occurs at
$Q=d/(2\nu)$. More precisely, when $Q\ge d/(2\nu)$ the normalized functional $\widetilde F_L$ converges to a Gaussian limit, whereas for $Q<d/(2\nu)$ the limit is non-Gaussian and depends on the function arising in the asymptotics of the kernel iterates. The corresponding family of limit distributions is introduced in the following definition, for $Q\in \N$ with $Q\geq 2$.

\begin{definition}\label{Q-spherical}
Let $\beta$ be a function in $L^2([-1,1],\di\mu)$ and define
$$ g_\beta(\ell) = \int_{-1}^1 \beta(u) G_{\ell,d}(u) \di \mu(u) \,. $$
We say that $\beta$ is $Q$-admissible if
\begin{equation}\label{bbb}
\sum_{\ell_1, \dots, \ell_Q\in \N} \prod_{i=1}^Q  |g_\beta(\ell_i)| n_{\ell_i}  \int_{-1}^1 G_{\ell_1}(u) \dotsb  G_{\ell_Q} \di \mu(u)<\infty \,.
\end{equation}
Under this condition, we define $Z_Q(\beta)\in L^2(\Omega)$ as follows
\begin{equation}
    \label{defZQ}
    Z_Q(\beta)
:=
\sum_{\substack{e_1, \dots, e_Q\in E \\ e_i = (\ell_i, m_i)}}
\prod_{i=1}^Q
\sqrt{|g_\beta(\ell_i)|}  \mathcal{G}_q(e_1,\dots,e_Q)
:\zeta_{e_1} \dotsb \zeta_{e_Q}: \,.
\end{equation}
\end{definition}
\noindent
The fact that $Z_Q(\beta)$ is well defined in $L^2(\Omega)$ when \eqref{bbb} is satisfied is proved in \Cref{welldefinitionZQ}. We also denote $\widetilde Z_Q(\beta):=Z_Q(\beta)\,\Var(Z_Q(\beta))^{-1/2}$ if $\Var(Z_Q(\beta))\neq 0$.

\begin{remark} The condition~\eqref{bbb} holds if $\beta$ is the covariance function of a Gaussian  field
\[
T_\beta(x)
=
\sum_{ \ell=0}^\infty\sum_{m=1}^{n_{\ell,d}}
\sqrt{g_\beta(\ell)}\,\zeta_{\ell m}\,Y_{\ell m}(x)\,.
\]
Indeed, since $|G_{\ell,d}|\le1$ and $g_\beta\ge0$ if $\beta$ is a covariance function, condition~\eqref{bbb} follows by
\begin{equation}
\label{easycondforbb}
    \Var(T_\beta(x))=\sum_{\ell=0}^\infty g_\beta(\ell) n_{\ell,d}<\infty\,.
\end{equation}
In this case, one can show that (see \eqref{Iq-Hq} and \Cref{welldefinitionZQ}) $$Z_Q(\beta)\overset{d}{=}\int_{\S^d}H_q(T_\beta(x))\,\di x\,.$$
If \eqref{easycondforbb} does not hold, the previous representation does not work, but $Z_Q(\beta)$ is still in the $Q$-th chaos and equal to $I_q\Big(\mathcal{G}_q\sqrt{|g_\beta|}^{\otimes q}\Big)$ if \eqref{bbb} holds, see \Cref{welldefinitionZQ}.
\end{remark}


We are now ready to state the main result in the high-disorder regime. Recall that here we assume $\varphi$ non-odd when $\kappa$ is odd (see \Cref{simmetria}).
\begin{theorem}[High-disorder regime]\label{thm:high-disorder} Assume \Cref{ass::CRI}  and $\kappa'(1)>1$. Then $\kappa$ admits a unique fixed point $b\in(-1,1)$, satisfying $b\ge0$ and $\kappa'(b)\in[0,1)$. Assume that $\kappa'(b)\neq 0$ and set
\[
\nu
=
-
\frac{\log(\kappa'(b))}
{\log(\kappa'(1))}>0,
\]
Then $$\beta_L(x) : =  \frac{\kappa_L(x) - b}{(\kappa'(b))^L} \to \beta(x) \qquad x\in (-1,1) $$
where $\beta\in L^Q(\mu)$  if and only if $Q<d/(2\nu)$. Moreover,  the following conclusions hold:
\begin{enumerate}
\item[(i)] \textbf{Gaussian case.} If $Q>\frac{d}{2\nu}$, then
\begin{align*}
\Var(F_L) &\asymp \kappa'(b)^{\frac{d}{2\nu}L},
&\dW(\widetilde F_L,Z) &\lesssim  L^{-d/4}.
\end{align*}

\item[(ii)]\textbf{Critical case.} If $Q=\frac{d}{2\nu}$, then
\begin{align*}
\Var(F_L) &\asymp \kappa'(b)^{\frac{d}{2\nu}L} L,
&\dW(\widetilde F_L,Z) &\lesssim L^{-1/2}.
\end{align*}

\item[(iii)] \textbf{Non-central case.} If $Q<\frac{d}{2\nu}$ then $\beta$ is $Q$-admissible and
\begin{align*}
\Var(F_L) &\asymp \kappa'(b)^{QL},
&\widetilde F_L &\xrightarrow{d}  {\rm sgn}(\varphi_Q)\,\widetilde Z_Q(\widehat \beta)\,
\end{align*}
where $\widehat \beta: = \beta - \int_{-1}^1 \beta(u) \di \mu(u)$.
\end{enumerate}
\end{theorem}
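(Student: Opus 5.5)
We organize the argument in three layers: the deterministic dynamics of $\kappa_L$, the variance asymptotics of each chaos, and the distributional limit in each regime.

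\textbf{Fixed-point dynamics.} From the representation \eqref{eq:KL1}, $\kappa$ is a power series $\kappa(u)=\sum_k a_k u^k$ with $a_k\ge0$ and $\sum_k a_k=1$. Hence $\kappa$ is convex on $[0,1]$ and increasing there. Since $\kappa'(1)>1$ and $\kappa(1)=1$, $\kappa(u)>u$ just below $1$ is impossible, so $\kappa(u)<u$ near $1$. Together with $\kappa(0)=a_0\ge0$ and convexity, this yields a unique fixed point $b\in[0,1)$ with $\kappa'(b)\in[0,1)$. On $[-1,0]$ we have $|\kappa(u)|\le\kappa(|u|)$, so every orbit is attracted to $b$; this is where the non-oddness of $\kappa$ (or of $\varphi$) enters.

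Next we prove the Koenigs-type linearization. Since $b$ is an attracting hyperbolic fixed point of a real-analytic map, the limit $\beta(x)=\lim_L(\kappa_L(x)-b)/\kappa'(b)^L$ exists, locally uniformly on $(-1,1)$: it is the Koenigs function composed with finitely many iterates.

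To locate the singularity at $x=1$, we use \Cref{ass::CRI}. Near $1$ the map behaves like $1-\kappa(u)\approx\kappa'(1)(1-u)$. An orbit starting at $1-\varepsilon$ therefore needs about $\log(1/\varepsilon)/\log\kappa'(1)$ steps to leave a neighbourhood of $1$. After that it contracts at rate $\kappa'(b)$, which gives
\[
|\beta(x)|\asymp \kappa'(b)^{-\log(1/(1-x))/\log\kappa'(1)}=(1-x)^{-\nu}.
\]
Near $-1$ the function $\beta$ is bounded, or else has the same singularity when $\kappa$ is even. Since $d\mu\asymp(1-u)^{d/2-1}du$, this shows $\beta\in L^Q(\mu)$ if and only if $Q\nu<d/2$.

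For the uniform version at finite $L$ we need the domination
\[
|\kappa_L(x)-b|\lesssim \min\bigl(1,\kappa'(b)^L(1-x)^{-\nu}\bigr).
\]
We obtain it by splitting at $L_0(x)=\log(1/(1-x))/\log\kappa'(1)$.

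\textbf{Variance asymptotics.} We have
\[
\Var(F_L[q])=\varphi_q^2\,q!\,\omega_d^2\int\widehat\kappa_L(u)^q\,d\mu(u).
\]
Since $C_0(L)=\int\kappa_L\,d\mu\to b$, we get $1-C_0(L)\to1-b$ and $\widehat\kappa_L\approx(\kappa_L-C_0(L))/(1-b)$. Split the integral at the scale $1-u\asymp\kappa'(b)^{L/\nu}$, where $\kappa_L$ is still of order $1$. The inner region contributes $\kappa'(b)^{Ld/(2\nu)}$. On the outer region we use $\widehat\kappa_L\approx\kappa'(b)^L\widehat\beta$, which contributes:
\begin{itemize}
\item $\kappa'(b)^{qL}\int|\widehat\beta|^q\,d\mu$ if $q<d/(2\nu)$, by dominated convergence with the bound above;
\item $\kappa'(b)^{qL}L$ at equality;
\item $\kappa'(b)^{Ld/(2\nu)}$ otherwise.
\end{itemize}
Summing over $q\ge Q$ with the weights $\varphi_q^2q!$, which are summable, and using $|\widehat\kappa_L|^q\le\widehat\kappa_L^2$ for $q\ge2$, we conclude that the rank-$Q$ term dominates in case (iii). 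In cases (i)–(ii) all chaoses contribute at the same order, up to the factor $L$ in (ii).

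Positivity of the $q=Q$ integral in the non-central case requires care. After subtracting the mean, $\int\widehat\beta^Q\,d\mu$ could vanish. We handle this through the Gegenbauer positivity \eqref{positivity of powers of Gell} and the spectral representation $\widehat\kappa_L=\sum_{\ell\ge1}g_L(\ell)n_\ell G_\ell$ with $g_L\ge0$. This also shows that $Q\ge2$, since the first chaos integrates to $0$ after the removal of $\zeta_{00}$.

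\textbf{Limits.} In case (iii), write
\[
F_L[Q]/\kappa'(b)^{QL/2}=I_Q\bigl(\varphi_Q\,\mathcal G_Q\,\sqrt{g_L/\kappa'(b)^L}^{\otimes Q}\bigr).
\]
The spectral coefficients $g_L(\ell)/\kappa'(b)^L$ converge to $g_{\widehat\beta}(\ell)/(1-b)$ for each $\ell$, by dominated convergence in \eqref{C-orto}. The isometry then reduces $L^2(\Omega)$ convergence to convergence of the sum \eqref{bbb} with $g_L$ in place of $g_\beta$. We control this sum by generalized dominated convergence, dominating $\int\prod_i\widehat\kappa_L$-type expressions by $\int|\widehat\beta|^Q\,d\mu<\infty$; this is also how $Q$-admissibility of $\widehat\beta$ is established. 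The remaining chaoses are negligible by the variance comparison above, so $\widetilde F_L\to\operatorname{sgn}(\varphi_Q)\widetilde Z_Q(\widehat\beta)$.

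In cases (i)–(ii) we truncate the chaos expansion at level $N$ and apply \Cref{prop:malliavinstein} to the truncation. This requires bounding the four-fold integrals $\mathcal M_q$ with $\kappa$ replaced by $\widehat\kappa_L$. We estimate them via the two-point bound $|\widehat\kappa_L(\langle x,y\rangle)|\lesssim\min(1,(\kappa'(b)^{L/\nu}/d(x,y)^2)^{\nu})$, integrating over geodesic distances as in Breuer–Major type computations. The contraction integrals come out of order $\mathrm{Var}^2\cdot L^{-d/2}$, respectively $L^{-1}$, which after a square root gives the stated rates. The tail beyond $N$ is handled by the uniform variance bound, letting $N$ grow suitably with $L$.

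\textbf{Main obstacle.} The hardest step is the uniform two-sided control of $\kappa_L-b$ near $u=1$: the transition region between the repelling and the attracting fixed points, where \Cref{ass::CRI} must be propagated through $L_0(x)$ iterations. The first thing to try is a comparison with the linear maps $1-u\mapsto\kappa'(1)(1\pm\epsilon)(1-u)$, using the correction term $c(1-u)^\gamma$ with $\gamma>1$ so that the accumulated error product converges. The same control is needed both for the variance rates and for the contraction bounds.
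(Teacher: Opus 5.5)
Your fixed-point analysis, the variance asymptotics and the critical case follow essentially the paper's route, but there are two genuine gaps.

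\textbf{Non-central case (iii).} You correctly reduce the claim to norm convergence in $\mathfrak H^{\otimes Q}$ of $h_L=\mathcal G_Q\sqrt{g_L/\kappa'(b)^L}^{\otimes Q}$ to $h_\infty=\mathcal G_Q\sqrt{g_\infty}^{\otimes Q}$. Termwise convergence is fine. So is the fact that $\|h_L\|^2$, a fixed multiple of $\kappa'(b)^{-QL}\int_{-1}^1\widehat\kappa_L^Q\,\di\mu$, converges to that multiple of $\beth_Q$, since for finite $L$ the kernel $\widehat\kappa_L$ has summable spectrum. The $h_L$ have bounded norms and converge coordinatewise, so $h_L\to h_\infty$ weakly. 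Hence $\|h_L-h_\infty\|^2\to c\,(\beth_Q-R_\infty)$, where
\[
R_\infty=\sum_{\ell_1,\dots,\ell_Q}\prod_{i=1}^Q n_{\ell_i,d}\,g_\infty(\ell_i)\,\widehat{\mathcal G_Q}(\ell_1,\dots,\ell_Q)\,.
\]
Everything therefore hinges on the identity $R_\infty=\beth_Q=\beta_0^{-Q}\int_{-1}^1\widehat\beta^Q\,\di\mu$.
\begin{itemize}
\item Fatou only gives $R_\infty\le\beth_Q$, which is $Q$-admissibility.
\item Equality says that no spectral mass escapes to high frequencies. It is an interchange of the Gegenbauer--Gaunt sum with the integral for the singular limit $\widehat\beta$, whose spectrum is not summable.
\item Your domination by $\int|\widehat\beta|^Q\,\di\mu$ yields $\|h_L\|^2\to c\,\beth_Q$, but not this identity.
\end{itemize}
The paper proves it via \Cref{prop:real_analytic_limit} (by Vitali, $\widehat\beta$ is real analytic on $(-1,1)$ with nonnegative Taylor coefficients) and \Cref{lem:analytic conservation}. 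An Abel--Poisson smoothing $\sum_\ell r^\ell n_{\ell,d}g_\infty(\ell)G_{\ell,d}$, $r\uparrow1$, combined with $g_\infty\ge0$, $\widehat{\mathcal G_Q}\ge0$ and monotone convergence, would also work. Some such argument is indispensable, and your sketch has none.

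\textbf{Gaussian case (i).} You misidentify where the rate comes from. By \Cref{lem: classic for contractions} and the moment bounds, for all $q\ge Q>d/(2\nu)$ and $1\le r\le q-1$ the contraction integrals are $O\bigl(\Var(F_L)^2\kappa'(b)^{\delta L}\bigr)$ for some $\delta>0$. They are exponentially small, not of order $\Var(F_L)^2L^{-d/2}$. This is what allows truncation at $N_L\asymp L$ despite the factor $N\,9^N$ in \Cref{prop:malliavinstein}; with only polynomially small contractions you could not go beyond $N=O(\log L)$.

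The rate $L^{-d/4}$ is entirely the truncation error at $N_L\asymp L$. It needs the uniform-in-$L$ tail estimate $\int_{-1}^1|\widehat\kappa_L|^N\,\di\mu\lesssim N^{-d/2}\kappa'(b)^{\frac{d}{2\nu}L}$ of \Cref{lem_tailcontrol}. The paper derives this from the near-diagonal bound $\widehat\kappa_L(1-x\,\kappa'(b)^{L/\nu})\le 1-x/c$ (a consequence of \eqref{pp}) and a Beta-integral computation. A uniform bound $\int_{-1}^1|\widehat\kappa_L|^q\,\di\mu\lesssim\kappa'(b)^{\frac{d}{2\nu}L}$ for $q\ge Q$ only controls the normalized tail by $\sum_{q>N}q!\varphi_q^2$. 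That gives a qualitative CLT but no rate independent of $\varphi$.

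Two smaller slips:
\begin{itemize}
\item Dominance of the $Q$-th chaos in (iii) needs $|\widehat\kappa_L|^q\le|\widehat\kappa_L|^{Q+1}$ for $q>Q$, not $|\widehat\kappa_L|^q\le\widehat\kappa_L^2$.
\item Your two-point bound in terms of $d(x,y)$ ignores the antipodal singularity when $\kappa$ is even or odd.
\end{itemize}
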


\begin{remark}
The condition that rules out $ \kappa'(b)=0$ (possible only if $b=0$) is technical: indeed, in the latter case, the convergence to the fixed point $b=0$  is super-exponential~\cite[Chapter II.4]{fixed-point2}. A detailed analysis of this degenerate situation is left for future work. To the best of our knowledge, this situation does not arise for standard neural network kernels.
\end{remark}

\medskip
\noindent

Let us move to the sparse regime. In this case, the iterates of the kernel converge to a non-hyperbolic fixed point. The absence of hyperbolicity prevents a first-order analysis; consequently, stronger regularity assumptions are required at this point, beyond~\Cref{ass::CRI}.

\begin{assumption}\label{ass:cri-pot}
Assume $\kappa'(1)=1$ and one of the following two conditions holds.

\begin{enumerate}

\item[(A)] There exist $1<\gamma_1<\gamma_2$ and constants $c_1,c_2$ such that $\gamma_2>2\gamma_1-1$ and as $u\to 1^-$
\[
1-\kappa(u)
=
(1-u)
-
c_1(1-u)^{\gamma_1}
-
c_2(1-u)^{\gamma_2}
+
o((1-u)^{\gamma_2}).
\]

\item[(B)]There exist $1<\gamma_1<\gamma_2<\gamma_3$  and constants $c_1,c_2,c_3$ such that $\gamma_2=2\gamma_1-1$ and as $u\to 1^-$
\[
1-\kappa(u)
=
(1-u)
-
c_1(1-u)^{\gamma_1}
-
c_2(1-u)^{\gamma_2}
-
c_3(1-u)^{\gamma_3}
+
o((1-u)^{\gamma_3})\,.
\]
\end{enumerate}
\end{assumption}
\begin{remark}
This assumption is not very restrictive for the activation functions that are commonly adopted. It is satisfied, for instance, by the covariance kernels associated with ReLU and ReLU-like activations (see \Cref{rem::relu}); moreover, it covers all ``regular'' activations  (see \Cref{rem::sparse-}).
\end{remark}

In the sparse regime, the asymptotic behaviour is analogous to that of the high-disorder case, but with different variance asymptotics and convergence rates.

\begin{theorem}[Sparse regime]\label{thm:sparse}Under \Cref{ass:cri-pot}, let $\nu=\gamma_1-1$. Then, there exist $\beta_0$ and $\beta_1$ (depending only on $\kappa$) such that
$$ \beta_L(x)
:=
L^{\rho+1}
\Big(
\kappa_L(x)
-1
+\beta_0 L^{-\rho}
-\beta_1 L^{-(\rho+1)} \log L
\Big) \to \beta(x), \qquad  x\in[-1,1) $$

where $\beta\in L^Q(\mu)$  if and only if $Q<d/(2\nu)$. Moreover,  the following conclusions hold:
\begin{enumerate}
\item[(i)] \textbf{Gaussian case.} If $Q>\frac{d}{2\nu}$, then
\begin{align*}
\Var(F_L) &\asymp L^{-\frac{d}{2\nu}},
&\dW(\widetilde F_L,Z) &\lesssim (\log L)^{-d/4}.
\end{align*}

\item[(ii)]\textbf{Critical case.} If $Q=\frac{d}{2\nu}$, then
\begin{align*}
\Var(F_L) &\asymp L^{-\frac{d}{2\nu}}\log L,
&\dW(\widetilde F_L,Z) &\lesssim (\log L)^{-1/2}.
\end{align*}

\item[(iii)] \textbf{Non-central case.} If $Q<\frac{d}{2\nu}$, then $\beta$ is $Q$-admissible and
\begin{align*}
\Var(F_L) &\asymp L^{-Q},
&\widetilde F_L &\xrightarrow{d} {\rm sgn}(\varphi_Q)\,\widetilde Z_Q(\widehat \beta)\,\,.
\end{align*}
where $\widehat \beta: = \beta - \int_{-1}^1 \beta(u) \di \mu(u)$.
\end{enumerate}
\end{theorem}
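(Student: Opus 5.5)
We prove \Cref{thm:sparse} in three steps: asymptotics of the iterates $\kappa_L$ near the parabolic fixed point $u=1$; variance asymptotics of each chaos, split into a bulk part and a near-diagonal part; and a conclusion by \Cref{prop:malliavinstein} in the Gaussian and critical cases and by $L^2$-convergence of the $Q$th chaos in the non-central case.

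\textbf{Step 1: iterates near $u=1$.} Set $\rho:=1/\nu=1/(\gamma_1-1)$ and $v_L:=1-\kappa_L(u)$. The recursion is $v_{L+1}=v_L-c_1v_L^{\gamma_1}-c_2v_L^{\gamma_2}+\dots$. This is the classical parabolic-fixed-point iteration. We linearize it with the Fatou-type coordinate $w=v^{-\nu}$, which gives
\[
w_{L+1}=w_L+\nu c_1+O\!\left(w_L^{-1}\right)
\]
under (A). Under (B) the condition $\gamma_2=2\gamma_1-1$ produces a $\log$ correction instead. Summing the recursion gives
\[
1-\kappa_L(u)=\beta_0L^{-\rho}-\beta_1L^{-\rho-1}\log L+L^{-\rho-1}\beta(u)+o(L^{-\rho-1}),
\]
where $\beta(u)$ encodes the "initial phase" $w_0(u)=(1-u)^{-\nu}$. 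Since $w_0(u)$ enters additively, $\beta(u)$ has a singularity $\asymp(1-u)^{-\nu}$ as $u\to1$, that is $\beta(\langle x,y\rangle)\asymp d(x,y)^{-2\nu}$. Near $u=-1$ we use $\kappa_L(u)\to1$; the first iteration maps $[-1,1-\epsilon]$ into a compact subset of $(-1,1)$, so $\beta$ is bounded there. The integrability criterion for $\beta^Q$ against $\mu$, in geodesic polar coordinates, reduces to $\int_0 \theta^{-2\nu Q}\theta^{d-1}d\theta<\infty$, i.e.\ $Q<d/(2\nu)$. We also need a uniform domination of the form
\[
|\beta_L(u)|\lesssim \min\!\left((1-u)^{-\nu},\,L^{\rho+1}\right)
\]
together with $1-C_0(L)\asymp L^{-\rho-1}$. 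This gives $\widehat\kappa_L\approx\widehat\beta_L/(\text{const})$ away from the diagonal and $\widehat\kappa_L\le 1$ near it. The crossover scale is $1-u\asymp L^{-\rho-1}\cdot$, up to logs.

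\textbf{Step 2: variance of each chaos.} We use
\[
\Var(F_L[q])=\varphi_q^2q!\,\omega_d^2\int\widehat\kappa_L(u)^q\,d\mu(u).
\]
We split the integral at $1-u\asymp L^{-1}$, which is the scale where $(1-u)^{-\nu}L^{-\rho-1}$ reaches order one.
\begin{itemize}
\item For $q<d/(2\nu)$, the bulk dominates and dominated convergence gives $\int\widehat\kappa_L^q\,d\mu\sim c\,L^{-q}\int\widehat\beta^q\,d\mu$.
\item For $q>d/(2\nu)$, the near-diagonal cap of $\mu$-mass $L^{-d/(2\nu)\cdot}$ dominates. We must check the exponent bookkeeping here: $\mu$ near $u=1$ has density $(1-u)^{d/2-1}$, so the cap of size $\delta$ has mass $\delta^{d/2}$. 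The cap scale must therefore be $\delta\asymp L^{-1/\nu}$, which gives $L^{-d/(2\nu)}$.
\item For $q=d/(2\nu)$, the integrand is log-divergent, which yields the extra factor $\log L$.
\end{itemize}
We then sum over $q\ge Q$ using $|\widehat\kappa_L|\le1$ and monotonicity in $q$ of these rates, so the chaos of rank $Q$ dominates; this proves the variance claims.

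\textbf{Step 3: limits.} In cases (i) and (ii) we truncate the chaos expansion at level $N$. The tail has a relative contribution that we make small uniformly in $L$. We then apply \Cref{prop:malliavinstein} and bound the contraction integrals $\mathcal M_q$ by the same cap/bulk splitting, via Young-type inequalities on the sphere. This part is the main technical obstacle. The difficulty is showing that the quadruple integral is $o(\Var^2)$ with the stated logarithmic rates $(\log L)^{-d/4}$ and $(\log L)^{-1/2}$; we would try Hölder with exponents chosen so that each $\widehat\kappa_L^{r}$ factor sits just above or below its integrability threshold.

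In case (iii) we do not use Stein's method. We write $F_L[Q]=I_Q(\varphi_Q\mathcal G_Q\sqrt{g_L}^{\otimes Q})$ and show $L^{\rho+1}\cdot$(rescaled) $g_L(\ell)\to c\,g_{\widehat\beta}(\ell)$ for each $\ell$, with domination permitting convergence of the $\ell^2$ norm. The needed domination is exactly the $Q$-admissibility sum~\eqref{bbb}, which follows from $\widehat\beta\in L^Q(\mu)$ through the Gaunt/Gegenbauer identity for $\int\prod G_{\ell_i}\,d\mu$. This gives $L^2$-convergence to a multiple of $Z_Q(\widehat\beta)$, while the higher chaoses are negligible by Step 2. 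Finally, $\operatorname{sgn}(\varphi_Q)$ appears after normalization. Using $|g_\beta|$ in place of $g_\beta$ requires checking that $g_{\widehat\beta}\ge0$, inherited as a limit of nonnegative spectra.
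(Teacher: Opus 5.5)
Your architecture matches the paper's. You use the Fatou coordinate $(1-\kappa_L)^{1-\gamma_1}$ at the parabolic point, split $\int\widehat\kappa_L^q\,\di\mu$ into bulk and cap, use truncation plus \Cref{prop:malliavinstein} in the Gaussian regimes, and prove $L^2$-convergence of the $Q$th chaos in the non-central one. However, Step~1 contains a scaling error that breaks the passage to Step~2, and several quantitative ingredients are missing.

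\textbf{The scaling error and the missing uniform lower bound.} $1-C_0(L)$ is not $\asymp L^{-\rho-1}$. Since $\kappa_L(u)-C_0(L)=L^{-\rho-1}\bigl(\beta_L(u)-\int\beta_L\,\di\mu\bigr)$ and the defining formula gives $\beta_L(1)=\beta_0L-\beta_1\log L$, evaluating at $u=1$ yields $1-C_0(L)=\beta_0L^{-\rho}(1+o(1))$. The leading term $\beta_0L^{-\rho}$ of $1-\kappa_L(u)$ does not depend on $u$, so it survives in $1-C_0(L)$. Hence $\widehat\kappa_L=(1+o(1))\,(\beta_L-\int\beta_L\,\di\mu)/(\beta_0L)$, i.e.\ $v_L=L^{-1}$.

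With your normalization, $\widehat\kappa_L$ would be of order one in the bulk and would exceed $1$ near the diagonal. That gives $\int\widehat\kappa_L^q\,\di\mu\asymp1$, contradicting the $L^{-q}$ you then claim. For the same reason the cap bound is $\beta_L\lesssim L$ (from $\kappa_L\le1$), not $L^{\rho+1}$. Also, $\widehat\kappa_L$ becomes of order one at $1-u\asymp L^{-1/\nu}=L^{-\rho}$, not at $L^{-\rho-1}$ or $L^{-1}$. Your Step~2 reaches $L^{-1/\nu}$ only by back-solving from the target rate.

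Separately, the lower bounds in (i)--(ii), in particular the factor $\log L$ at $Q=d/(2\nu)$, need a lower bound on $\beta_L$ that is \emph{uniform in $L$} on the whole range $L^{-\rho}\lesssim 1-u\le\delta$. This is \eqref{beta-lower}, and the singularity of the limit $\beta$ does not give it. The paper makes your Fatou summation quantitative: $S_L(x)\lesssim(1-x)^{1-\gamma_1}$ and $S_L(x)\gtrsim(1-x)^{1-\gamma_1}-D$ uniformly in $L$.

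Three smaller points. The $O(w^{-1})$ term already yields the $\log L$ under (A), with $A_1=\gamma_1c_1/2$; (B) only adds $c_2/c_1$ to $A_1$. With the theorem's convention, the $\beta$-term in $1-\kappa_L$ carries a minus sign, so $\beta\to+\infty$ at $1$. For even $\kappa$, $\kappa(-1)=1$, so $\beta$ is also singular at $-1$ (hence $(1-x^2)^{-\nu}$ in \eqref{beta-upper}), and your claim that $[-1,1-\epsilon]$ is mapped into a compact subset of $(-1,1)$ fails.

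\textbf{Step~3: the rates are not proved.} The missing tool is elementary. \Cref{lem: classic for contractions}, which follows from $|a^rb^p|\le a^{r+p}+b^{r+p}$ and rotation invariance (your ``Young-type'' instinct), bounds the contraction integral by $\int|\widehat\kappa_L|^{r}\,\di\mu\int|\widehat\kappa_L|^{Q-r}\,\di\mu\int|\widehat\kappa_L|^{Q}\,\di\mu\lesssim L^{-2Q}\log L$ in the critical case. Against $\Var(F_L[Q])\asymp L^{-Q}\log L$, this gives $(\log L)^{-1/2}$ directly. No truncation is needed there, since $\Var(F_L-F_L[Q])/\Var(F_L[Q])\lesssim 1/\log L$.

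In case (i), your claim that the rank-$Q$ chaos dominates is false: every chaos contributes at order $L^{-d/(2\nu)}$. The variance order still follows from $\Var(F_L[Q])\le\Var(F_L)\lesssim\int|\widehat\kappa_L|^Q\,\di\mu$. What you need is the uniform tail bound of \Cref{lem_tailcontrol}, $\E[(\widetilde F_L-\widetilde F_{L,N})^2]\lesssim N^{-d/2}$. It comes from $\widehat\kappa_L(1-xL^{-\rho})\le 1-x/c$ for small $x$, together with Beta-function asymptotics. Combining it with $\mathcal M_q^2\lesssim\varphi_q^4L^{-\delta}$ and $N_L=\lfloor\varepsilon\log L\rfloor$ gives $(\log L)^{-d/4}$.

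\textbf{Case (iii): two steps are asserted, not proved.} First, the identity $\sum_{\ell_1,\dots,\ell_Q}\prod_i n_{\ell_i,d}\,g(\ell_i)\,\widehat{\mathcal G_Q}(\ell_1,\dots,\ell_Q)=\int\widehat\beta^Q\,\di\mu$ for the \emph{limit} requires interchanging an infinite Gegenbauer series, its $Q$th power and the integral. You also need $R_L\to R_\infty$ to run generalized dominated convergence. The paper does this via \Cref{lem:analytic conservation}, using that $\widehat\beta$ is real analytic with nonnegative Taylor coefficients (\Cref{prop:real_analytic_limit}, via Vitali).

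Your observation $g_\infty\ge0$ is correct and would allow a lighter route. The Abel--Poisson means of $\widehat\beta$ are absolutely convergent Gegenbauer series for $r<1$ and converge to $\widehat\beta$ in $L^Q(\mu)$. Monotone convergence as $r\uparrow1$, using $\widehat{\mathcal G_Q}\ge0$, then gives the identity. This still has to be carried out.

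Second, $\Var(F_L)\asymp L^{-Q}$ and the non-degeneracy of $\widetilde Z_Q(\widehat\beta)$ require $\int\widehat\beta^Q\,\di\mu>0$, which is not automatic for odd $Q$. The paper obtains it from three facts: the Gaunt expansion has nonnegative terms, some even $\ell$ has $g_\infty(\ell)>0$ (because $\kappa$ is never odd in the sparse regime), and \eqref{positivity of powers of Gell}.
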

We provide a sketch of the proof for both the sparse and the high-disorder regimes. The complete proof is given from~\Cref{asymp-sparse-high,central-critical,central-noncritical,non-central}
\begin{proof}[Sketch of the proofs of~\Cref{thm:high-disorder,thm:sparse}]Since the behaviour of the field is governed by the dynamics of the iterated kernel near the attractive fixed point, one needs to prove the existence of sequences $b_L$ and $w_L$ such that the renormalized iterates
\begin{equation}\label{betaL-iperuranio}
\beta_L(u) := \frac{\kappa_L(u) - b_L}{w_L}
\end{equation}
converge to a non-trivial limiting function $\beta(u)$ on $(-1,1)$. See~\Cref{fixed-point:high} and~\Cref{fixed-point-sparse}.

We also prove that the limiting function $\beta$ exhibits a power-type singularity at $1$ of order $\nu$. This singularity drives the rate of the integrals
\[
\int_{-1}^1 \widehat\kappa_L(u)^q \,\di \mu(u)\,,\qquad L\rightarrow\infty\,,
\]
and this fact yields a phase transition occurring at the critical threshold $q = \frac{d}{2\nu}$, which corresponds to the borderline between integrability and non-integrability. This control determines which chaotic components contribute to the limit.

\smallskip
\noindent
$\boldsymbol{Q < \frac{d}{2\nu}.}$  In this regime,
\[
\Var(F_L - F_L[Q]) = o\!\left(\Var(F_L[Q])\right), \quad L \to \infty,
\]
so that the $Q$-th chaos dominates. After normalization, it converges in $L^2(\Omega)$ to a non-Gaussian limit. The proof relies on tools from the theory of real-analytic functions.

\smallskip
\noindent
$\boldsymbol{Q = \frac{d}{2\nu}.}$ In this regime, the $Q$-th chaos still dominates, thanks to a logarithmic correction in the normalization. After normalization, it converges in distribution to a Gaussian limit. The result follows from a quantitative Fourth Moment Theorem.

\smallskip
\noindent
$\boldsymbol{Q > \frac{d}{2\nu}.}$ In this regime, no single chaos dominates: infinitely many chaotic components contribute at the same order. 
In this case, we control the truncated chaotic decomposition via a Malliavin--Stein bound. This, combined with suitable tail estimates, yields a central limit theorem.
\end{proof}

\begin{remark}
Because neural networks are typically applied for high-dimensional data ($d$ is large), a CLT requires either a very large $\nu$ (i.e. we need to choose $\sigma$ with $\nu$ high) or a very large Hermite rank $Q$ (unlikely for functionals commonly studied in the literature). For instance, in the case of ReLU activations, since $\gamma_1=3/2$, $\nu=1/2$,  a CLT holds if and only if $d\leq Q$.
\end{remark}

\section{Proofs}\label{sec: proofs}
In what follows, we prove the convergence theorems in the three different regimes.

\medskip\noindent
From now on, we always denote by $\kappa$ a kernel of the form~\eqref{eq:KL1}, with $\sigma \in \mathbb D^{1,2}$, that is:
\[
\sum_{q=0}^\infty q!\, q\, a_q(\sigma)^2 < \infty\,\qquad a_q(\sigma)=\frac{1}{q!}\E[H_q(Z)\sigma(Z)]=\frac{\kappa^{(q)}(0)}{q!}\,\,.
\]
Under this assumption, we have the following properties.

\begin{enumerate}[label=(P\arabic*), start=0]
\item\label{p0} $\kappa$ can be written in the following way
\begin{equation}\label{power-series}
\kappa(u) = \sum_{q=0}^\infty q!\,a_q^2\, u^q,
\end{equation}
where the above power series converges uniformly on $(-1,1)$.
    \item\label{p1} $\kappa(u) \leq |\kappa(u)|\leq \kappa(|u|) <1$ for every $u\in (-1,1)$.
    \item\label{p2} $\kappa\in C^1([-1,1])$ and $\kappa'(u) \leq \kappa'(1)$ for every $u\in [-1,1]$, and thus $\kappa$ is $(\kappa'(1))$--Lipschitz.
    \item \label{p3}  $\kappa$  analytic in $(-1,1)$ and $\kappa^{(s)}(u)>0$ for $u>0$, $s\in \N$. In particular $\kappa$ increasing in $(0,1]$.
\end{enumerate}
Moreover, if $\kappa\in C^1([-1,1])$ and admits the representation~\eqref{power-series}, then it can also be written in the form~\eqref{eq:KL1} with $\sigma\in\mathbb D^{1,2}$. In particular, for a NNGF $T_L$ with activation $\sigma$ and kernel $\kappa$ of the form ~\eqref{eq:KL1}, satisfying
\Cref{ass::CRI} or \Cref{ass:cri-pot} as in \Cref{thm:low-disorder}, \Cref{thm:high-disorder} and \Cref{thm:sparse}, we have that $\kappa\in C^1$, $\sigma\in \mathbb D^{1,2}$ and the above properties are all fulfilled.

\smallskip
\noindent
The proofs of~\ref{p0}--\ref{p3} and the proof of the equivalence are straightforward and are included for completeness in~\Cref{sec::prop}.

\subsection{Low-disorder regime}\label{low-dis-proof}
The next lemma describes the linearized behaviour of iterates near the fixed point $u=1$. After renormalization, they converge to a non-trivial limit profile, in a way reminiscent of Koenigs’ linearization theorem~\cite{MR1508749}; see also~\cite[Theorem 2.1]{fixed-point2} for a modern proof.
\begin{lemma}\label{lem:tecnical} Under \Cref{ass::CRI} and $\kappa'(1)<1$, for every $x\in [-1,1]$, we have
\begin{equation}\label{conve} \beta_L(x) = \frac{\kappa_L(x) - 1}{\kappa'(1)^L} \to \beta(x)\,,
\end{equation}
where $\beta$ is continuous  function with $\beta(x)<0$ for $x\in(-1,1)$.
\end{lemma}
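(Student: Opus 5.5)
We prove \eqref{conve} by a Koenigs-type telescoping argument. Write $\lambda:=\kappa'(1)\in(0,1)$; note $\lambda>0$, since $\kappa'(1)=\sum q\,q!a_q^2$ and $\kappa$ is non-constant. First we show that $\kappa_L(x)\to1$ for every $x\in[-1,1]$. By \ref{p1}, $|\kappa_L(x)|\le \kappa_L(|x|)$ and $\kappa(|x|)$ is increasing on $[0,1]$. Moreover, $\kappa$ is convex on $[0,1]$ by \ref{p3}, so the function $u\mapsto \kappa(u)-u$ is convex. It vanishes at $1$ with derivative $\lambda-1<0$ there, hence it is positive on $[0,1)$. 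So on $[0,1)$ the sequence $\kappa_L(|x|)$ is increasing and converges to a fixed point, which must be $1$.

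For $x<0$ we need $\kappa_L(x)\to1$ and not just $|\kappa_L(x)|\to 1$. Since $\kappa(u)\ge \kappa(0)-(\text{odd part})$, and $\kappa$ is not odd (see \Cref{simmetria}), some even coefficient $a_{2j}$ with $j\ge1$ or $a_0$ is nonzero. Then $\kappa(u)>-\kappa(|u|)$, and near $u\approx-1$ this forces $\kappa_1(x)$ eventually to lie in $(-1+\delta,1]$. We then show the orbit enters $[0,1]$ after finitely many steps: otherwise $|\kappa_L|\to1$ with $\kappa_L<0$ along a subsequence, contradicting continuity and $\kappa(-1)>-1$. This is the fiddly point. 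At $x=-1$ itself we use $\kappa(-1)\in(-1,1]$.

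Next, the local estimate. Set $\varepsilon_L:=1-\kappa_L(x)\to0$. By \Cref{ass::CRI}, $\varepsilon_{L+1}=\lambda\varepsilon_L(1+O(\varepsilon_L^{\gamma-1}))$; here we read \eqref{CRI} with slope $\lambda$ in place of $1$, since in this regime the linear coefficient is $\kappa'(1)$. Hence, once $\varepsilon_{L_0}$ is small, $\varepsilon_L\le C(\lambda+\eta)^{L-L_0}$ decays geometrically. Then
\[
\frac{\varepsilon_L}{\lambda^L}=\frac{\varepsilon_{L_0}}{\lambda^{L_0}}\prod_{j=L_0}^{L-1}\bigl(1+O(\varepsilon_j^{\gamma-1})\bigr),
\]
and the product converges absolutely because $\sum_j \varepsilon_j^{\gamma-1}<\infty$ by geometric decay. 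Thus $\beta_L(x)=-\varepsilon_L/\lambda^L\to\beta(x)$, with $\beta(x)<0$ whenever $\varepsilon_{L_0}>0$. This holds for $x\in(-1,1)$, since then $\kappa_L(x)<1$ for all $L$ by \ref{p1}. At $x=1$ we have $\beta=0$, and at $x=-1$ the limit also exists.

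Finally, continuity. The key observation is the functional equation $\beta_{L}(\kappa_{L_0}(x))=\lambda^{L_0}\beta_{L+L_0}(x)$. Fix a small $\delta$. On the set $\{u:1-\delta\le u\le1\}$ the product bounds above are uniform, because the error term in \eqref{CRI} is uniform and the tails $\sum_{j}\varepsilon_j^{\gamma-1}\le C\delta^{\gamma-1}\sum_j(\lambda+\eta)^{j(\gamma-1)}$ are uniform. Each $\beta_L$ is continuous, so uniform convergence gives continuity of $\beta$ there. For a general compact set $[-1,1]$, the pointwise convergence $\kappa_L\to1$ is monotone on $[0,1]$, and by the step above every orbit eventually enters $[0,1]$. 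Dini's theorem together with compactness then gives a uniform $L_0$ with $\kappa_{L_0}([-1,1])\subset[1-\delta,1]$. Writing $\beta(x)=\lambda^{-L_0}\beta(\kappa_{L_0}(x))$ expresses $\beta$ as a composition of continuous maps, so $\beta$ is continuous on $[-1,1]$.

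The main obstacle is establishing the uniform entry of orbits into a neighbourhood of $1$, especially for negative $x$ near $-1$.
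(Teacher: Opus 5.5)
Your overall strategy is sound, and it differs from the paper's. The paper does not argue from scratch: it imports pointwise convergence and the negativity of $\beta$ from an external lemma, then deduces uniform convergence and continuity from the monotonicity of $\beta_L$ in $L$ via Dini's lemma. Your argument is self-contained and has two parts. First, a Koenigs-type telescoping product controlled by \Cref{ass::CRI}; reading it with slope $\lambda$ is correct, since the literal form would force $\kappa'(1)=1$. Second, the conjugation identity $\beta(x)=\lambda^{-L_0}\beta(\kappa_{L_0}(x))$, which transports uniform convergence from $[1-\delta,1]$ to $[-1,1]$. This buys something real. Dini's theorem presupposes a continuous limit, so monotonicity in $L$ alone only gives semicontinuity of $\beta$. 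Your uniform estimate near $1$ combined with the conjugation identity actually proves continuity.

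The step that does not hold as written is the treatment of $x<0$. You claim that an orbit staying negative must have $|\kappa_L|\to1$ along a subsequence, but this is not justified. The input you invoke, that $\kappa$ is not odd, cannot suffice. For example, $\kappa(u)=(u+u^2)/2$ is not odd, yet the orbit of $-1/2$ stays negative and tends to the fixed point $0$. What you need is that $\kappa$ has no fixed point in $[-1,1)$, and this must use $\lambda<1$.

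It follows in one line from \ref{p2}. Since $\kappa'\le\lambda$ on $[-1,1]$ and $\kappa\le1$, you have $0\le 1-\kappa(u)=\int_u^1\kappa'(s)\,\mathrm{d}s\le\lambda(1-u)$ for all $u\in[-1,1]$. Hence $0\le 1-\kappa_L(x)\le\lambda^L(1-x)\le 2\lambda^L$. This gives:
\begin{itemize}
\item $\kappa_L\to1$ uniformly on all of $[-1,1]$, including negative $x$;
\item a uniform $L_0$ with $\kappa_{L_0}([-1,1])\subset[1-\delta,1]$, with no Dini or compactness argument;
\item the a priori bound $|\beta_L|\le 2$, with $|\beta_L(x)|$ non-increasing in $L$.
\end{itemize}
With this replacement, the rest of your proof goes through unchanged.
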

\begin{proof}
The convergence and the fact that $\beta$ is negative in $(-1,1)$ is established in~\cite[Lemma~3.2]{dilillo2026largedeviationprinciplesfunctional}. Moreover, in ~\cite[Proof of Lemma~3.2, p.15]{dilillo2026largedeviationprinciplesfunctional} it is shown that $\beta_L(x)\le \beta_{L-1}(x)$, so by Dini's lemma  the convergence is uniform in $[-1,1]$ and $\beta$ is continuous.
\end{proof}
Using the previous lemma, we can now prove~\Cref{thm:low-disorder}

\begin{proof}[Proof of~\Cref{thm:low-disorder}] Let $e=(\ell,m)\in E$ with $\ell\geq1$. Using the definition of $C_{\ell}(L)$ (cf.~\eqref{C-orto}) and the orthogonality of Gegenbauer polynomials (recall $G_{0,d}\equiv  1$)
we have for $\ell\ge 1$
\begin{align} \frac{C_\ell(L)}{\kappa'(1)^L} &=  \omega_d\int_{-1}^1 \frac{\kappa_L(u)}{\kappa'(1)^L}  G_{\ell,d}(u) \di \mu (u) \nonumber\\
&=\omega_d \int_{-1}^1 \frac{\kappa_L(u)-1}{\kappa'(1)^L}  G_{\ell,d}(u) \di \mu (u) \to \omega_d \int_{-1}^1 \beta(u) G_{\ell,d}(u) \di \mu (u)\, \label{subtraction}
\end{align}
where the convergence follows by Lemma \ref{lem:tecnical}. Since $ \mu$ is a probability measure, following  the same argument,  we obtain
\begin{equation}\label{starr} \frac{1-C_0(L)}{\kappa'(1)^L} \to -\int_{-1}^1 \beta(u) \di \mu(u)>0\,,  \end{equation}
where the limit is strictly positive because $\beta$ is negative in $(-1,1)$ by \Cref{lem:tecnical}. Thus  we have
$$ g_L(\ell,m): =
\mathbf{1}_{\ell\geq 1} \,
\frac{C_\ell(L)}{1-C_0(L)}  \xrightarrow[]{L \to + \infty}   \mathbf{1}_{\ell\geq 1} \frac{\omega_d \int_{-1}^1 \beta(u) G_{\ell,d}(u) \di \mu(u)}{ -\int_{-1}^1 \beta(u) \di \mu(u)} = : g_\infty(\ell,m) \,.$$
Moreover, in~\cite{nostro} the authors proved that, denoting by $X_L$ the integer valued random variable defined by
$$ \mathbb P(X_L = \ell) := C_{\ell}(L) \frac{n_{\ell,d}}{\omega_d},$$
we have $\E[ X_L^2] = O(\kappa'(1)^L)$ and hence by Markov's inequality $$C_{\ell}(L) \frac{n_{\ell,d}}{\omega_d} \le \P\tonde{X_L\ge \ell}\le \frac{\E[X_L^2]}{\ell^2}\lesssim \frac{k'(1)^L}{\ell^2}\,.$$ Therefore, using~\eqref{starr}, we have $1-C_0(L)\asymp k'(1)^L$ and hence one can bound $g_{L}$ as follows
\begin{equation}
    \label{crucial bound}
    g_L(\ell,m) \lesssim \frac{\kappa'(1)^L}{\ell^2 n_{\ell,d}(1-C_0(L))}\lesssim \frac{1}{n_{\ell,d}\,\ell^2}\,.
\end{equation}
The previous bound implies that
\[
\widehat  T_\infty(x):= \sum_{(\ell,m)\in E} \sqrt{g_\infty(\ell,m)}\,\zeta_{\ell,m}\,Y_{\ell m}(x)\,.
\]
is a well-defined isotropic, unit-variance random field on $\S^d$.

\noindent
Now,  using the independence of $(\zeta_e)$, we have
\begin{align*}  \E\Big[ |\widehat T_L(x) - \widehat T_\infty(x)|^2\Big]  &= \sum_{\ell=0}^\infty \sum_{m =1}^{n_{\ell,d}} \Big|  \sqrt{g_L(\ell,m)} - \sqrt{g_\infty(\ell,m)}\Big|^2  Y_{\ell m}(x)^2    \\
& = \sum_{\ell=1}^\infty \Big|  \sqrt{g_L(\ell,1)} - \sqrt{g_\infty(\ell,1)}\Big|^2  \frac{n_{\ell,d}}{\omega_d} \to 0
\end{align*}
where the last equality follows by \eqref{defGell} and since $g_L(\ell,m), g_\infty(\ell,m)$ are constant with respect to $m$, whence the convergence follows by the dominated convergence $g_L \to g_\infty$. Thus, $\widehat T_L \overset{L^2(\Omega\times \S^d)}{\rightarrow} \widehat T_{\infty}$ follows by the fact that the above $L^2$-distance is independent of $x$ and
\begin{equation}
\label{convL2}
\widehat T_L(x) \overset{L^2(\Omega)}{\longrightarrow} \widehat T_{\infty}(x), \qquad  x \in \S^d\, .\end{equation}
Now, using the Wiener chaos decompositions, Jensen's inequality and Tonelli's theorem we obtain
\begin{align*}
		\E[ |F_L - F_\infty|^2]  &= \E\Bigg[
        \Bigg| \int_{\S^d} \varphi \Big(\widehat T_L(x)\Big)  - \varphi\Big( \widehat T_\infty(x)\Big) \di x  \Bigg|^2 \Bigg]
		\\
& \lesssim
         \int_{\S^d} \sum_{q=0}^\infty  \varphi_q^2\E\Big[ \big| H_q \big(\widehat T_L(x)\big)  - H_q\big(\widehat T_\infty(x)\big) \big|^2 \Big] \di x
\\
& =
         2 \int_{\S^d} \sum_{q=0}^\infty  q! \varphi_q^2 \Bigg( 1 - \Cov\Big(\widehat T_L(x), \widehat T_\infty(x)\Big)^q\Bigg) \di x
	\end{align*}
where the last identity follows using the Diagram formula~\cite[Prop. 4.15]{marinucci2011random}.
We conclude by dominated convergence. Indeed, from~\eqref{convL2} we have  $$\Cov\Big(\widehat T_L(x), \widehat T_\infty(x)\Big)\to 1\,. $$
To dominate, we observe that
$-1\leq \Cov\Big(\widehat T_L(x), \widehat T_\infty(x)\Big)\leq 1 $ and since $\varphi\in L^2(e^{-x^2/2})$, we have   $\sum q! \varphi_q^2< \infty$.
\end{proof}

\subsection{Asymptotic analysis in the sparse and high-disorder regimes}\label{asymp-sparse-high}

To prove the theorem in the remaining two cases, we need to analyse the decay of the integrals $
\int_{-1}^1 \widehat{\kappa}_L(u)^q \,\di \mu(u).$
The proofs differ, but the results can be unified using the following notation.

\medskip
\noindent  {\bfseries High-disorder.} One can prove that $\kappa$ admits a unique fixed point $b\in (-1,1)$ and that $b\ge0$ with $\kappa'(b)\in[0,1)$ (see~\Cref{fixed-point:high} for more details). The case $\kappa'(b)=0$, possible only if $b=0$, will be always excluded in our theorems, see \Cref{rem:superattractive}. We use the following notation
$$ v_L =  (\kappa'(b))^L, \qquad \nu = -\frac{\log(\kappa'(b))}{\log (\kappa'(1))}, \qquad \rho = \nu^{-1},  \qquad \beta_0 = 1-b\, .$$

\medskip
\noindent  {\bfseries Sparse.} Let $\gamma_1$ as in~\Cref{ass:cri-pot} then we denote with
$$ v_L = L^{-1}, \qquad \nu = \gamma_1-1,\qquad   \rho = \nu^{-1}, \qquad \beta_0 = ((\gamma_1-1)c_1)^{-\rho}\, . $$

Moreover, we will prove that  for every kernel $\kappa$ satisfying our assumptions, there exists a function $\beta$ (see \eqref{betaL-iperuranio}) such that $\beta\in L^q(\mu)$ $\iff$ $q< d/(2\nu)$ and, if $\beta\in L^2(\mu)$, we have for $\ell\ge 1$
\begin{equation}\label{puntuale} \frac{g_L(\ell)}{v_L} \to \frac{1}{\beta_0}\int_{-1}^1  \beta(u)G_{\ell,d}(u)\di \mu(u) = : g_\infty(\ell)\, .
\end{equation}
See~\Cref{fixed-point:high,fixed-point-sparse} and the corresponding proofs in~\Cref{fixed-point:high-tecn,fixed-point-sparse-tecn} for more details.

Using the unified notation introduced above, we will prove the following result in both the high-disorder regime (see~\Cref{cor:boun-hatK-high}) and the sparse regime (see~\Cref{cor:boun-hatK-sparse})
\begin{proposition} \label{cor:boun-hatK-iper}
Let $\widehat \kappa_L$ be as in \eqref{defkappahatL} and assume either \Cref{ass::CRI}, $\kappa'(1)>1$ and $\kappa'(b)\neq 0$, or \Cref{ass:cri-pot}. Then the following asymptotics holds for fixed $q\in\N$, $q\ge2$.

\smallskip
\noindent
If $q<d/(2\nu)$, as $L\to\infty$ we have
	\begin{equation*}
		\int_{-1}^1 \widehat \kappa_L(u)^q\,\di \mu(u)
		= v_L^{q}\Big(\beth_q+o(1)\Big)\,
	\end{equation*}
where
    $$\beth_q = \frac{1}{\beta_0^q} \int_{-1}^1 \widehat \beta(u)^q  \di \mu(u)\,,\,\,\,\qquad \widehat \beta:=\beta-\int_{-1}^1\beta(u)\di\mu(u)\,.
    $$
Moreover,  $\beth_q=0 \iff \kappa$ and $q$ are both odd.

\smallskip
\noindent
If $q\ge d/(2\nu)$ and $q$ is even whenever $\kappa$ is odd we have
$$ \int_{-1}^1 \widehat \kappa_L(u)^q \di \mu (u)\asymp
\begin{cases}
			v_L^q \log(v_L^{-1})&\textnormal{if } q=\frac{d}{2\nu} \\
			v_L^{d/(2\nu)}
            &\textnormal{if } q>\frac{d}{2\nu}
		\end{cases}\,\,\,\,\,\,.
        $$

\smallskip
\noindent
Moreover, we always have (with no restrictions on $q,\kappa$)
$$ \int_{-1}^1  |\widehat\kappa_L(u)|^q \di \mu (u)\lesssim
\begin{cases}
v_L^q &\textnormal{if } q<\frac{d}{(2\nu)} \\
			v_L^q \log(v_L^{-1})&\textnormal{if } q=\frac{d}{2\nu} \\
			v_L^{d/(2\nu)}
            &\textnormal{if } q>\frac{d}{2\nu}
		\end{cases}\,\,\,\,\,\,.
        $$

\end{proposition}
As a consequence of the previous results,  we obtain the following
\begin{corollary}
\label{thm:variance chaos-iperuranio} Let $F_L[q]$ be the $q$th chaotic component of $F_L$ (cf.~\eqref{deco Y}). Under the same assumptions of \Cref{cor:boun-hatK-iper}, we have the following asymptotics for fixed $q\in \N$, $q\geq 2$.

\smallskip
\noindent
If $q<d/(2\nu)$, setting $\gimel_q = \beth_q \omega_d^2 q! \varphi_q^2$, as $L\rightarrow\infty$ we have
	\begin{equation*}
		\Var(F_L[q])= v_L^{q} \Big( \gimel_q +o(1) \Big)\,\,\,.
	\end{equation*}

\smallskip
\noindent
If $q\ge d/(2\nu)$, $\varphi_q\neq 0$ and $q$ is even whenever $\kappa$ is odd, we have
\begin{align*}\label{eq: variance chaos non int case}
		\Var(F_L[q])\,\asymp \,
		\begin{cases}
			v_L^q \log(v_L^{-1})\, &\textnormal{if } q=d/(2\nu) \\
			v_L^{d/(2\nu)}\,\, &\textnormal{if }q>d/(2\nu)
		\end{cases}
		\qquad\,.
	\end{align*}
\end{corollary}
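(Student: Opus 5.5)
The plan is to reduce the corollary to \Cref{cor:boun-hatK-iper} through an exact variance formula for the $q$th chaotic component. Since $\widehat T_L$ is centred, isotropic, with unit variance and covariance $\widehat\kappa_L$, the Diagram formula (or the isometry $\E[H_q(X)H_q(Y)]=q!\,\Cov(X,Y)^q$ for standard jointly Gaussian $X,Y$) together with \eqref{deco Y} and Fubini gives
\[
\Var(F_L[q])=\varphi_q^2\int_{\S^d}\int_{\S^d} q!\,\widehat\kappa_L(\langle x,y\rangle)^q\,\di x\,\di y .
\]
Fubini is legitimate because $|\widehat\kappa_L|\le 1$ and $\S^d$ has finite measure.

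Next we reduce the double integral to a one-dimensional one. By rotational invariance, the inner integral in $y$ does not depend on $x$. Writing $y$ in polar coordinates around $x$, the law of $\langle x,y\rangle$ under the normalized uniform measure on $\S^d$ is the probability measure $\mu$ in \eqref{defmu}. Hence
\[
\int_{\S^d}\widehat\kappa_L(\langle x,y\rangle)^q\,\di y=\omega_d\int_{-1}^1\widehat\kappa_L(u)^q\,\di\mu(u),
\]
and integrating over $x$ contributes another factor $\omega_d$. This yields the identity
\[
\Var(F_L[q])=\omega_d^2\,q!\,\varphi_q^2\int_{-1}^1\widehat\kappa_L(u)^q\,\di\mu(u).
\]

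The two cases then follow directly from \Cref{cor:boun-hatK-iper}.
\begin{itemize}
\item For $q<d/(2\nu)$, the first part of the proposition gives $\Var(F_L[q])=v_L^q\big(\omega_d^2 q!\varphi_q^2\beth_q+o(1)\big)=v_L^q(\gimel_q+o(1))$. This holds with no restriction on $\varphi_q$ or on the parity of $q$: if $\varphi_q=0$, or if $\beth_q=0$, both sides are simply $o(v_L^q)$.
\item For $q\ge d/(2\nu)$, we assume $\varphi_q\neq0$ and that $q$ is even whenever $\kappa$ is odd. The second part of the proposition applies, and multiplying by the nonzero constant $\omega_d^2q!\varphi_q^2$ preserves the two-sided bounds $\asymp$. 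This gives the rates $v_L^q\log(v_L^{-1})$ at $q=d/(2\nu)$ and $v_L^{d/(2\nu)}$ for $q>d/(2\nu)$.
\end{itemize}

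There is no real obstacle here, since all the analytic difficulty sits in \Cref{cor:boun-hatK-iper}. The only point needing care is the change of variables to $\mu$, namely checking that the normalizing constants $\omega_{d-1}/\omega_d$ in \eqref{defmu} make $\mu$ exactly the pushforward of the normalized surface measure under $y\mapsto\langle x,y\rangle$. This is what produces the constant $\omega_d^2$ in $\gimel_q$.
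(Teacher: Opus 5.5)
Your proposal is correct and is essentially the paper's own argument: the Diagram formula and the spherical change of variables give $\Var(F_L[q])=q!\,\varphi_q^2\,\omega_d^2\int_{-1}^1\widehat\kappa_L(u)^q\,\di\mu(u)$, after which both regimes are read off from \Cref{cor:boun-hatK-iper}. One small correction: exchanging expectation with the double integral is justified by $\E|H_q(\widehat T_L(x))H_q(\widehat T_L(y))|\le q!$ (Cauchy--Schwarz), not by $|\widehat\kappa_L|\le 1$; this is a routine fix.
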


\begin{proof} By the definition of $F_L[q]$, the Diagram Formula and a standard spherical change of coordinate we obtain
$$ \Var(F_L[q]) = q! \varphi_q^2 \int_{(\S^d)^2} \widehat\kappa_L(\langle x, y\rangle)^q \di x \di y  = q! \varphi_q^2 \omega_d^2\int_{-1}^1 \widehat \kappa_L(u)^q \di \mu(u)\,. $$
    Now, the claim follows by~\Cref{cor:boun-hatK-iper}.
\end{proof}

\subsection{Central limit theorem in the critical case}\label{central-critical}
In this section, we prove the central limit theorem in the case $Q = d/(2\nu)$. To do this, we need the following lemma,
whose proof is deferred to~\Cref{sec: classic for contractions-tecn} (see \cite[Section 4]{MR_JFA2015} for some related results).

\begin{lemma}[Bound for contractions]
	\label{lem: classic for contractions} 	 Let $r_1, r_2\geq 0$ and let $f:(-1,1)\to \R$. Then
	\begin{align*}
		&\int_{(\S^d)^4}f(\langle x,y \rangle)^{r_1}f(\langle z,w \rangle)^{r_1}f(\langle x,z\rangle)^{r_2}f(\langle y,w\rangle)^{r_2}\di x \di y \di z \di w
		\\
		&\lesssim \int|f(u)|^{r_1+r_2}\di  \mu(u)\int|f(u)|^{r_1}  \di \mu(u)\int|f(u)|^{r_2} \di \mu(u)\,.
	\end{align*}
\end{lemma}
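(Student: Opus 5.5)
The idea is to remove the cycle in the integrand by a pointwise inequality. Once one edge of the four-cycle $x\!-\!y\!-\!w\!-\!z\!-\!x$ is broken, the remaining integrand has a tree structure, and a tree factorizes into one-dimensional integrals via rotation invariance on $\S^d$.

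First, bound the integrand by its absolute value, so it suffices to treat $|f|$. Then apply the elementary inequality, valid for $a,b\ge 0$ and $r_1,r_2\ge 0$,
\[
a^{r_1}b^{r_2}\le \max(a,b)^{r_1+r_2}\le a^{r_1+r_2}+b^{r_1+r_2},
\]
with $a=|f(\langle x,y\rangle)|$ and $b=|f(\langle x,z\rangle)|$. The two factors involving $x$ are thereby replaced by a single factor containing only one edge at $x$. This splits the integral into two terms:
\[
\int |f(\langle x,y\rangle)|^{r_1+r_2}|f(\langle z,w\rangle)|^{r_1}|f(\langle y,w\rangle)|^{r_2}
\quad\text{and}\quad
\int |f(\langle x,z\rangle)|^{r_1+r_2}|f(\langle z,w\rangle)|^{r_1}|f(\langle y,w\rangle)|^{r_2}.
\]

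Each of these is a path-shaped graph on the four vertices, so we integrate out the leaves one by one using Tonelli's theorem. The basic identity is that for fixed $y\in\S^d$ and any $s\ge 0$,
\[
\int_{\S^d}|f(\langle x,y\rangle)|^{s}\di x=\omega_d\int_{-1}^1|f(u)|^s\di\mu(u),
\]
which is the standard spherical change of coordinates already used in the proof of \Cref{thm:variance chaos-iperuranio}. It is independent of $y$, so iterating gives a product.

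In the first term the order is:
\begin{itemize}
\item integrate $x$ (a leaf attached to $y$ with exponent $r_1+r_2$);
\item integrate $z$ (a leaf attached to $w$ with exponent $r_1$);
\item integrate $y$ against $w$ (exponent $r_2$);
\item integrate $w$, which gives a factor $\omega_d$.
\end{itemize}
In the second term the order is:
\begin{itemize}
\item integrate $x$ (attached to $z$ with exponent $r_1+r_2$);
\item integrate $y$ (attached to $w$ with exponent $r_2$);
\item integrate $z$ against $w$ (exponent $r_1$);
\item integrate $w$.
\end{itemize}
Both terms equal $\omega_d^4\int|f|^{r_1+r_2}\di\mu\int|f|^{r_1}\di\mu\int|f|^{r_2}\di\mu$, which is the claim with constant $2\omega_d^4$.

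The only delicate point is the first step: finding a pointwise bound that breaks the cycle while preserving the total exponents $r_1+r_2$, $r_1$ and $r_2$. The $\max$ inequality does this. We also assume $f$ is measurable, so that Tonelli's theorem applies to the nonnegative integrands. The bound is trivially true if any right-hand integral is infinite, and the cases $r_1=0$ or $r_2=0$ are covered by the same argument.
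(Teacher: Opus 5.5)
Your proof is correct and is essentially the paper's argument: the paper applies the same inequality $a^{r_1}b^{r_2}\le a^{r_1+r_2}+b^{r_1+r_2}$ to the two factors at $x$ to break the four-cycle, then factorizes the resulting path-shaped integrals using rotation invariance on the sphere. Your leaf-by-leaf integration is an explicit version of the paper's change of variables by rotations. You also write out the second term, which the paper leaves implicit by symmetry, and this yields the explicit constant $2\omega_d^4$.
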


We prove now our critical CLTs, i.e. the cases (ii) in \Cref{thm:high-disorder,thm:sparse}.
\begin{theorem}\label{thm::CLT-high-critical}
    Let $F_L=F_L(\varphi)$ be as in \eqref{ourfunc}, with Hermite rank $Q$, $F_L[Q]$ as in \eqref{deco Y} and  $\varphi$ non-odd if $\kappa$ is odd. Let $\widehat \kappa_L$ be as in \eqref{defkappahatL} and assume either \Cref{ass::CRI}, $\kappa'(1)>1$ and $\kappa'(b)\neq 0$, or \Cref{ass:cri-pot}.  If $Q= d/(2\nu)$, then
\begin{align*}
		\Var(F_L)=\Var(F_L[Q])(1+o(1))\asymp v_L^{\frac{d}{2\nu}} \log(v_L^{-1})
	\end{align*}
	and we have the following quantitative CLT in Wasserstein distance
	\begin{equation*}
		\dW\left( \widetilde F_L , Z\right)\,\lesssim \,
	\frac{1}{\sqrt{\log(v_L^{-1})}}
		\qquad\,.
	\end{equation*}
\end{theorem}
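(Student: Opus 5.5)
The proof splits into a variance comparison and a fourth-moment estimate for the dominant chaos; the delicate point is the contraction bound for the $r=Q/2$ term. Since $F_L[q]$ are orthogonal,
\[
\Var(F_L)=\Var(F_L[Q])+\sum_{q>Q}\Var(F_L[q]),
\]
with $\Var(F_L[Q])\asymp v_L^{Q}\log(v_L^{-1})$ by \Cref{thm:variance chaos-iperuranio}. Here $Q=d/(2\nu)$ and $\varphi_Q\neq0$; $Q$ is even when $\kappa$ is odd by the symmetry in \Cref{simmetria}. For the tail, Corollary-type formulas give
\[
\sum_{q>Q}\Var(F_L[q])=\omega_d^2\sum_{q>Q}q!\varphi_q^2\int_{-1}^1\widehat\kappa_L^q\,\di\mu
\le\omega_d^2\sum_{q>Q}q!\varphi_q^2\int_{-1}^1|\widehat\kappa_L|^{Q+1}\,\di\mu ,
\]
using $|\widehat\kappa_L|\le1$. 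The last bound of \Cref{cor:boun-hatK-iper} with $q=Q+1>d/(2\nu)$ controls the integral by $v_L^{d/(2\nu)}=v_L^Q$. Together with $\sum q!\varphi_q^2<\infty$, the tail is $O(v_L^Q)=o(\Var(F_L[Q]))$, a full factor $\log(v_L^{-1})$ smaller. This proves the variance statement.

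For the Wasserstein bound, split $\widetilde F_L=Y_L+R_L$ with $Y_L=F_L[Q]/\sqrt{\Var(F_L)}$. Since $\dW$ is controlled by $L^2$ distance,
\[
\dW(\widetilde F_L,Z)\le \dW(Y_L/\|Y_L\|_2,Z)+O\big(\|R_L\|_2+|1-\|Y_L\|_2|\big).
\]
By the variance step, $\|R_L\|_2^2$ and $1-\|Y_L\|_2^2$ are both $O(1/\log(v_L^{-1}))$. This gives an error $O(\log(v_L^{-1})^{-1/2})$, which matches the claimed rate.

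It remains to bound $\dW(Y_L/\|Y_L\|_2,Z)$ for a single chaos. Apply \Cref{prop:malliavinstein} with $N=Q$ and only $f_Q$ nonzero, writing $\widehat\kappa_L$ in place of $\kappa$ and $c_Q^2\asymp 1/\Var(F_L[Q])\asymp (v_L^Q\log(v_L^{-1}))^{-1}$. By \Cref{lem: classic for contractions} with $r_1=r$, $r_2=Q-r$ and $1\le r\le Q-1$,
\[
\mathcal M_Q^2\lesssim c_Q^4\int|\widehat\kappa_L|^Q\di\mu\int|\widehat\kappa_L|^r\di\mu\int|\widehat\kappa_L|^{Q-r}\di\mu .
\]
The first integral is $\lesssim v_L^Q\log(v_L^{-1})$. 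For exponents $s<Q=d/(2\nu)$, \Cref{cor:boun-hatK-iper} gives $\int|\widehat\kappa_L|^s\,\di\mu\lesssim v_L^s$. This covers $s=1$ as well, since $\beta\in L^1$ and the same integrable-case argument applies; if needed, one can also use Hölder from $s=2$. The product of the second and third integrals is therefore $\lesssim v_L^Q$. Hence
\[
\mathcal M_Q^2\lesssim \frac{v_L^{2Q}\log(v_L^{-1})}{v_L^{2Q}\log^2(v_L^{-1})}=\frac{1}{\log(v_L^{-1})},
\]
which yields $\dW(Y_L/\|Y_L\|_2,Z)\lesssim \log(v_L^{-1})^{-1/2}$.

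The main obstacle is the bound $\int|\widehat\kappa_L|^s\,\di\mu\lesssim v_L^s$ for every $s<Q$, uniformly in the needed range, in particular when $r=Q/2$ and when $s=1$. This is exactly where no extra logarithm may enter. I would rely on the third display of \Cref{cor:boun-hatK-iper}, whose statement covers all $q$ without restriction, and for $s=1$ I would invoke Hölder, $\int|\widehat\kappa_L|\,\di\mu\le(\int|\widehat\kappa_L|^{s'}\,\di\mu)^{1/s'}$ with $1<s'<Q$, which again gives $O(v_L)$.
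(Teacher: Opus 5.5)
Your proposal is correct and follows the paper's proof essentially step for step. The paper also bounds the tail chaoses by $\int|\widehat\kappa_L|^{Q+1}\,\di\mu\lesssim v_L^{d/(2\nu)}$ against $\Var(F_L[Q])\asymp v_L^{d/(2\nu)}\log(v_L^{-1})$, reduces in $L^2$ to the normalized $Q$th chaos, and then applies the Malliavin--Stein bound together with the contraction lemma and the moment asymptotics of $\widehat\kappa_L$. Your direct treatment of exponent $1$ (domination by $(1-u^2)^{-\nu}$, integrable since $d/(2\nu)=Q\ge2$, giving $\int|\widehat\kappa_L|\,\di\mu\lesssim v_L$) fills a point the paper passes over, since its moment corollary is stated only for $q\ge2$. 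Use that argument rather than the H\"older-from-$s=2$ fallback: when $Q=2$ the fallback gives $v_L\sqrt{\log(v_L^{-1})}$, and the rate is then lost.
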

\begin{proof}
We first analyze the variance. From the assumption on the symmetry of $\kappa$ and $\varphi$ and since $Q = d/(2\nu)$, one can use~\Cref{thm:variance chaos-iperuranio} and hence
\begin{equation*}
\Var(F_L[Q])
\asymp
v_L^{d/(2\nu)} \log(v_L^{-1})\,.
\end{equation*}
	Moreover, since $|\widehat\kappa_L|\le 1$ $\big( \widehat \kappa_L$ is the covariance of $\widehat T_L$: a unit-variance field$\big)$, we have
	\begin{align*}
\Var(F_L-F_L[Q])&= \sum_{q=Q+1}^\infty q!\varphi_q^2 \omega_d^2\int_{-1}^1 \widehat\kappa_L(u)^{q}\,\di\mu(u)
	\lesssim \int_{-1}^1 \widehat |\kappa_L(u)|^{Q+1}\,\di \mu(u) \lesssim  v_L^{d/(2\nu)}\,,
	\end{align*}
    where the second-last inequality follows by $\sum_{q}q!\varphi_q^2<\infty$ (since $\varphi\in L^2(\R,e^{-x^2/2})$) and the last ones by ~\Cref{cor:boun-hatK-iper}.
In particular,
\[
\dW\left(
\widetilde F_L ,
\frac{F_L[Q]}{\sqrt{\Var(F_L[Q])}}
\right)^2
\lesssim
\E\left[
\left(
\widetilde F_L-
\frac{F_L[Q]}{\sqrt{\Var(F_L[Q])}}
\right)^2
\right]
\lesssim
\frac{\Var(F_L-F_L[Q])}{\Var(F_L[Q])}
\asymp\frac{1}{\log(v_L^{-1})}\, .
\]
To conclude the proof,
it remains to prove that
\begin{equation*}
\dW\left(
\frac{F_L[Q]}{\sqrt{\Var(F_L[Q])}},
Z
\right)
\lesssim \frac{1}{\sqrt{\log(v_L^{-1})}}\,.
\end{equation*}
Using~\Cref{prop:malliavinstein} we have
\[
\dW\left(
\frac{F_L[Q]}{\sqrt{\Var(F_L[Q])}},
Z
\right)
\lesssim
\max_{1\leq r \leq Q-1}
\frac{\sqrt{h_L(r,Q-r)}}{\Var(F_L[Q])},
\]
where
\begin{align*}
h_L(r,p)&=
\int_{(\S^d)^4}
\widehat\kappa_L(\langle x,y \rangle)^r\;
\widehat\kappa_L(\langle z,w \rangle)^r\;
\widehat\kappa_L(\langle x,z\rangle)^{p}\;
\widehat\kappa_L(\langle y,w\rangle)^{p}\; \di x \di y\di z \di w\,.
\end{align*}
Now, using~\Cref{lem: classic for contractions} we obtain
\begin{align*} h_L(r,Q-r)
&\lesssim  \int_{-1}^1 |\widehat\kappa_L(u)|^r \di \mu(u)
\int_{-1}^1 |\widehat\kappa_L(u)|^{Q-r} \di \mu(u)
\int_{-1}^1 |\widehat\kappa_L(u)|^{Q} \di \mu(u)
\\
&\lesssim v_L^r v_L^{Q-r} v_L^{Q}\log(v_L^{-1})  = \Big( v_L^{d/(2\nu)}\Big)^2 \log(v_L^{-1})
\end{align*}
where the last inequality follows from~\Cref{cor:boun-hatK-iper}. Using
$\Var(F_L[Q])\asymp v_L^{d/(2\nu)}\log(v_L^{-1})$,
we obtain
\[
\frac{\sqrt{h_L(r,Q-r)}}{\Var(F_L[Q])}
\lesssim
\frac{1}{\sqrt{\log(v_L^{-1})}},
\]
which yields the claim.
\end{proof}

\subsection{Central limit theorem in the non-critical case}\label{central-noncritical}
In this section, we prove the central limit theorem in the case $Q > d/(2\nu)$, i.e. the cases (i) in \Cref{thm:high-disorder,thm:sparse}.
\begin{theorem}\label{thm::CLT-high-noncritical}
Let $F_L=F_L(\varphi)$ be as in \eqref{ourfunc}, with Hermite rank $Q$, $F_L[Q]$ as in \eqref{deco Y} and  $\varphi$ non-odd if $\kappa$ is odd. Let $\widehat \kappa_L$ be as in \eqref{defkappahatL} and assume either \Cref{ass::CRI}, $\kappa'(1)>1$ and $\kappa'(b)\neq 0$, or \Cref{ass:cri-pot}.  If $Q> d/(2\nu)$, then
    \begin{align*}
		\Var(F_L)\asymp
			v_L^{d/(2\nu)}
	\end{align*}
	and we have the following quantitative CLT in Wasserstein distance
	\begin{equation*}
		\dW\left(\widetilde F_L , Z\right)\,\lesssim  \frac{1}{\Big(\log(v_L^{-1})\Big)^{d/4}}\,.
	\end{equation*}
\end{theorem}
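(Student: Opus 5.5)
The plan is a truncation argument: keep the chaoses $q\le N$ of $F_L$, with a cutoff $N=N_L$ that grows like a multiple of $\log\log(v_L^{-1})$, apply \Cref{prop:malliavinstein} to the truncated variable, and bound the remaining tail in $L^2$.

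\textbf{Variance.} Write $F_L-\E[F_L]=\sum_{q\ge Q}F_L[q]$. By \Cref{thm:variance chaos-iperuranio}, every $q>d/(2\nu)$ with $\varphi_q\neq0$ satisfies $\Var(F_L[q])\asymp v_L^{d/(2\nu)}$. This holds in particular for $q=Q$, which gives the lower bound $\Var(F_L)\gtrsim v_L^{d/(2\nu)}$. For the upper bound I use the uniform bound in \Cref{cor:boun-hatK-iper}:
\[
\int_{-1}^1|\widehat\kappa_L|^{q}\,\di\mu\le\int_{-1}^1|\widehat\kappa_L|^{Q}\,\di\mu\lesssim v_L^{d/(2\nu)}\qquad\text{for } q\ge Q,
\]
which holds because $|\widehat\kappa_L|\le1$. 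Summing against $\sum_q q!\varphi_q^2<\infty$ then gives $\Var(F_L)\lesssim v_L^{d/(2\nu)}$.

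\textbf{Truncation and the tail.} Set $Y_N=\sum_{q=Q}^N F_L[q]/\sqrt{\Var(F_L)}$; after a harmless renormalisation to unit variance it has the form required by \Cref{prop:malliavinstein}. The tail is controlled by
\[
\dW(\widetilde F_L,Y_N)^2\lesssim \Var(F_L)^{-1}\sum_{q>N}q!\varphi_q^2\int_{-1}^1|\widehat\kappa_L|^q\,\di\mu .
\]
Here I need a bound that is uniform in $q$, not only for fixed $q$. The idea is that $|\widehat\kappa_L(u)|$ is close to $1$ only on a spherical cap of angular radius about $\theta_L$, where $\theta_L^{2\nu}\asymp v_L$ by the $\nu$-singularity of $\beta$. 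Outside that cap, $|\widehat\kappa_L|\lesssim v_L\theta^{-2\nu}$. Hence
\[
\int_{-1}^1|\widehat\kappa_L|^q\,\di\mu\lesssim v_L^{d/(2\nu)}\big(1+C^q\,\epsilon_q\big),
\]
where the outer region contributes a geometric factor. Since this is not small in $q$, I would instead get the decay from the coefficients alone: $\sum_{q>N}q!\varphi_q^2\to0$, so the tail contributes $o(1)$ but with no rate in general. To obtain the rate $(\log v_L^{-1})^{-d/4}$, I expect the authors handle this by splitting $\int|\widehat\kappa_L|^q$ into the cap and its complement and using the fact that on the complement $|\widehat\kappa_L|\le1-\delta$, which produces a factor $(1-\delta)^q$. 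The main obstacle is this step: obtaining a uniform-in-$q$ estimate on $\int|\widehat\kappa_L|^q\,\di\mu$ sharp enough to beat the growth of the Malliavin--Stein constants.

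\textbf{Contractions.} For each $Q\le q\le N$ and each $1\le r\le q-1$, \Cref{lem: classic for contractions} and \Cref{cor:boun-hatK-iper} bound the contraction integral by
\[
\int|\widehat\kappa_L|^{q}\int|\widehat\kappa_L|^{r}\int|\widehat\kappa_L|^{q-r}\,\di\mu .
\]
Take the worst case $r$ and $q-r$ both below $d/(2\nu)$, so that the last two factors give $v_L^{r}v_L^{q-r}=v_L^{q}$, or the appropriate minimum. Dividing by $\Var(F_L)^2\asymp v_L^{d/\nu}$, each term is of size $v_L^{\min(q,d/(2\nu))-d/(2\nu)}$, up to logarithms, and the extreme cases $r=1$ or $q-r=1$ need separate care. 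If every such exponent is strictly positive, this is a power-small quantity $\lesssim v_L^{\eta}$ for some $\eta>0$, uniformly in $q$. The total Malliavin--Stein bound is then at most
\[
4N\sum_{q\le N}3^{2q}q!\,\sqrt{\text{contraction}}\lesssim C^{N}N!\,v_L^{\eta/2}.
\]
Choosing $N\asymp c\log\log(v_L^{-1})$ makes this term negligible. The final rate then comes from balancing it against the tail term, and I expect this balance to produce the stated $(\log v_L^{-1})^{-d/4}$.
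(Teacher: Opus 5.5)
Your skeleton matches the paper: the variance via \Cref{thm:variance chaos-iperuranio} and \Cref{cor:boun-hatK-iper}, then truncation, then \Cref{prop:malliavinstein} on the truncated sum. Your variance argument is correct. The gap is that the step producing the rate is missing: you leave the tail estimate as ``the main obstacle''. The paper's \Cref{lem_tailcontrol} gives $\E[(\widetilde F_L-\widetilde F_{L,N})^2]\le cN^{-d/2}$ uniformly in $L\ge L_0$. Neither of your two suggestions can deliver this. The coefficient decay $\sum_{q>N}q!\varphi_q^2\to0$ has no rate, and a factor $(1-\delta)^N$ off a cap is not enough either. The cap $\{u>1-\eps v_L^{\rho}\}$ has $\mu$-mass $\asymp v_L^{d/(2\nu)}$, the same order as $\Var(F_L)$, so the decay in $N$ must come from inside the cap.

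The missing ingredient is a linear decay of $\widehat\kappa_L$ at the scale $v_L^{\rho}$, uniform in $L$: $\widehat\kappa_L(1-xv_L^{\rho})\le 1-x/c$ for $x\in(0,\delta)$. This comes from a lower bound on $\kappa_L'$ near $1$. In the high-disorder case it is \eqref{pp}, obtained by writing $\kappa_L'(u)=\prod_{i=1}^{L}\kappa'(\kappa_{L-i}(u))$ and using \Cref{ass::CRI} with a Weierstrass product inequality. In the sparse case it is \eqref{bho-piccolo-sparso}. Substituting $u=1-xv_L^{\rho}$ gives $\int_{\mathrm{cap}}|\widehat\kappa_L|^N\,\di\mu\lesssim v_L^{d/(2\nu)}\int_0^1(1-z)^Nz^{d/2-1}\,\di z\asymp v_L^{d/(2\nu)}N^{-d/2}$. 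Monotonicity of $\kappa_L$ on $[0,1]$ then bounds the complement by $(1-\eps/c)^{N-Q}v_L^{d/(2\nu)}$. Even your ``$|\widehat\kappa_L|\le 1-\delta$ on the complement'' is this same estimate at $x=\eps$; it is not automatic, because the cap shrinks with $L$. This Beta integral is where the exponent $d/4$ comes from.

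Second, your cutoff $N_L\asymp\log\log(v_L^{-1})$ cannot give the stated rate. Even with the tail bound above, the truncation error is then $N_L^{-d/4}=(\log\log v_L^{-1})^{-d/4}$, far worse than $(\log v_L^{-1})^{-d/4}$. The paper takes $N_L=\lfloor\eps\log v_L^{-1}\rfloor$. This is affordable because no $N!$ survives. In \Cref{prop:malliavinstein}, $\mathcal M_q$ carries $c_q^2=\varphi_q^2/\Var(F_{L,N_L})$, so $q!\,\mathcal M_q\lesssim q!\varphi_q^2\,v_L^{\delta/2}$, and $\sum_q q!\varphi_q^2<\infty$. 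Hence the Malliavin--Stein term is $\lesssim N_L\,9^{N_L}v_L^{\delta/2}=o(N_L^{-d/4})$ once $\eps$ is small.

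Finally, your exponent $\min(q,d/(2\nu))-d/(2\nu)$ for the normalized contractions is $0$ whenever $q>d/(2\nu)$, so as written it gives no decay. Correct bookkeeping with \Cref{lem: classic for contractions} and \Cref{cor:boun-hatK-iper} shows that the contraction integral divided by $\Var(F_L)^2$ is $\lesssim v_L^{\min\{r,\,q-r,\,d/(2\nu),\,q-d/(2\nu)\}}$, up to logarithms. This is at most $v_L^{\delta}$ for a fixed $\delta>0$ uniformly in $q\ge Q$, precisely because $Q>d/(2\nu)$.
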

The proof of this result relies on a quantitative truncation argument.
The following lemma  (whose proof is deferred to~\Cref{sec:tailcontrol}) provides a uniform estimate on the tail of the chaotic expansion;
its proof is inspired by the strategy developed in~\cite[Lemma 3.1]{MRZ25}.
\begin{lemma}\label{lem_tailcontrol}
Under the same assumptions of \Cref{thm::CLT-high-noncritical}, there exist $M,L_0,c\ge1$ such that for all $N\ge M$ and $L\ge L_0$ we have
\begin{equation*}
    \E\left[\left(\widetilde F_L-\widetilde F_{L,N}\right)^2\right]\le \,c\, N^{-d/2}\,.
\end{equation*}
where $F_{L,N}:=\sum_{q=Q}^N F_L[q]$ is the truncated decomposition of $F_L$ and $\widetilde F_{L,N}:=F_{L,N}\,\Var(F_{L,N})^{-1/2}$.
\end{lemma}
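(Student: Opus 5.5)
We need a bound on $\E[(\widetilde F_L-\widetilde F_{L,N})^2]\le cN^{-d/2}$, uniform in $L\ge L_0$ and $N\ge M$. The plan is to first compare the two normalizations and reduce everything to a relative tail-variance bound. Writing $R_{L,N}:=F_L-\E[F_L]-F_{L,N}=\sum_{q>N}F_L[q]$ and using orthogonality of chaoses, $\Var(F_L)=\Var(F_{L,N})+\Var(R_{L,N})$. An elementary computation gives
\[
\E\big[(\widetilde F_L-\widetilde F_{L,N})^2\big]=2-2\sqrt{\Var(F_{L,N})/\Var(F_L)}\le 2\,\frac{\Var(R_{L,N})}{\Var(F_L)},
\]
using $1-\sqrt{1-t}\le t$. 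So it suffices to show $\Var(R_{L,N})\lesssim N^{-d/2}\,v_L^{d/(2\nu)}$ uniformly, together with $\Var(F_L)\gtrsim v_L^{d/(2\nu)}$. The lower bound on $\Var(F_L)$ is immediate from $\Var(F_L)\ge \Var(F_L[Q])$ and \Cref{thm:variance chaos-iperuranio}, since $Q>d/(2\nu)$ and $\varphi_Q\neq0$.

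For the tail, write $\Var(R_{L,N})=\omega_d^2\sum_{q>N}q!\varphi_q^2\int_{-1}^1\widehat\kappa_L(u)^q\,\di\mu(u)$. Since $\sum_q q!\varphi_q^2<\infty$, it is enough to prove the \emph{uniform-in-$q$} estimate
\[
\int_{-1}^1|\widehat\kappa_L(u)|^q\,\di\mu(u)\lesssim q^{-d/2}\,v_L^{d/(2\nu)}\qquad q\ge M,\ L\ge L_0 .
\]
\Cref{cor:boun-hatK-iper} gives only the $v_L^{d/(2\nu)}$ rate for fixed $q$, so the $q$-dependence has to be extracted by hand. I would split $[-1,1]$ into a neighbourhood $[1-\delta,1]$ of the singularity and its complement. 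On the complement, $\widehat\kappa_L$ is of size $O(v_L)$ uniformly: by the convergence $\beta_L\to\beta$ underlying \eqref{puntuale} and local boundedness of $\beta$ away from $1$, together with $1-C_0(L)\asymp \beta_0$-type normalization. Hence $|\widehat\kappa_L|^q\le (Cv_L)^q$ there, which is far smaller than $q^{-d/2}v_L^{d/(2\nu)}$ once $q\ge M>d/(2\nu)$ and $L\ge L_0$.

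Near $u=1$ I would use the quadratic behaviour of the normalized covariance. Under \Cref{ass::CRI}, $1-\kappa_L(u)\approx \kappa'(1)^L(1-u)$ when $\kappa'(1)^L(1-u)$ is small, and in the sparse case the analogue follows from \Cref{ass:cri-pot}. This should give a bound of the form $|\widehat\kappa_L(u)|\le \exp\!\big(-c\,\min\{s_L(1-u),1\}\big)$ on $[1-\delta,1]$, where $s_L$ is the scale with $s_L^{-d/2}\asymp v_L^{d/(2\nu)}$; equivalently, the correlation length is $s_L^{-1/2}$, with $s_L=\kappa'(1)^L$ in the high-disorder case. Then, using $\di\mu(u)\asymp(1-u)^{d/2-1}\di u$,
\[
\int_{1-\delta}^1 e^{-cq\min\{s_L(1-u),1\}}(1-u)^{d/2-1}\di u\lesssim (qs_L)^{-d/2}+e^{-cq}\lesssim q^{-d/2}v_L^{d/(2\nu)}.
\]
This is exactly where the $q^{-d/2}$ decay, and hence $N^{-d/2}$, comes from. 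The argument mirrors \cite[Lemma 3.1]{MRZ25}, where the variance of high chaoses is controlled by the volume of the region in which the covariance stays close to $1$.

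The main obstacle is the intermediate region, where $s_L(1-u)$ is large but $1-u<\delta$. There $\widehat\kappa_L$ passes from near $1$ down to $O(v_L)$, and I need a uniform bound $|\widehat\kappa_L|\le 1-\eta$ for some fixed $\eta>0$, so that $(1-\eta)^q$ kills the contribution for $q\ge M$. My approach would be a monotonicity/contraction argument. Each application of $\kappa$ maps $[b+\epsilon,1-\epsilon']$ strictly inside itself toward $b$, by \ref{p1}–\ref{p3}. Once $\kappa_j(u)$ leaves a fixed neighbourhood of $1$, it stays bounded away from $1$ uniformly. Subtracting $C_0(L)$ and dividing by $1-C_0(L)$, both of which converge by \eqref{puntuale}, preserves a gap $\eta$. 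The negative values of $u$ have to be handled as well, via $|\kappa(u)|\le\kappa(|u|)$. Checking that all these constants are uniform in $L\ge L_0$, particularly in the sparse regime where convergence is only polynomial, is the delicate part.
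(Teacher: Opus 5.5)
There is a genuine gap. Your reduction $\E[(\widetilde F_L-\widetilde F_{L,N})^2]\le 2\Var(R_{L,N})/\Var(F_L)$, together with $\Var(F_L)\gtrsim v_L^{d/(2\nu)}$, is correct. The scaling near $u=1$ is also the paper's: a window of width $s_L^{-1}=v_L^{\rho}$, with the Beta integral producing $q^{-d/2}$.

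The problem is the region $\{1-u\ge s_L^{-1}\}$. It has $\mu$-mass of order one, so a pointwise bound $|\widehat\kappa_L|\le 1-\eta$ there only gives a contribution of order $(1-\eta)^q$; this is your $e^{-cq}$ term. Consequently the inequality $(qs_L)^{-d/2}+e^{-cq}\lesssim q^{-d/2}v_L^{d/(2\nu)}$ is false: for fixed $q\ge M$ the right-hand side tends to $0$ as $L\to\infty$, while $e^{-cq}$ does not. After dividing by $\Var(F_L)$ you are left with $(1-\eta)^N v_L^{-d/(2\nu)}$, which is not bounded uniformly in $L\ge L_0$. So ``$(1-\eta)^q$ kills the contribution'' does not prove the lemma.

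The missing idea is to spend the gap on only $q-Q$ of the factors and control the remaining $Q$ factors by the $Q$-th moment:
\[
\int_{0}^{1-\epsilon v_L^{\rho}}|\widehat\kappa_L(u)|^q\,\di\mu(u)\le(1-\eta)^{q-Q}\int_{-1}^1|\widehat\kappa_L(u)|^Q\,\di\mu(u)\lesssim(1-\eta)^{q-Q}\,v_L^{d/(2\nu)},
\]
by \Cref{cor:boun-hatK-iper}, since $Q>d/(2\nu)$. This is what the paper does. It handles the intermediate and far regions at once, so you no longer need the claim $\widehat\kappa_L=O(v_L)$ away from $u=1$. That claim is problematic anyway: it fails near $u=-1$ for even or odd $\kappa$, and when $d/(2\nu)\le 1$ the term $\int\beta_L\,\di\mu$ inside $\widehat\kappa_L$ is in general unbounded.

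Your route to the gap $\eta$ also fails as stated. In the sparse regime every iterate $\kappa_j(u)$ tends to $1$ and $1-C_0(L)\asymp L^{-\rho}\to0$. So there is no fixed neighbourhood of $1$ that the iterates leave, and ``subtracting $C_0(L)$ and dividing by $1-C_0(L)$, both of which converge'' does not transfer a gap. What you actually need is $1-\kappa_L(u)\ge\eta\,(1-C_0(L))$.

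The paper gets everything from a single estimate at $u=1-xv_L^{\rho}$. It combines the mean value theorem with a lower bound on $\kappa_L'=\prod_i\kappa'(\kappa_{L-i}(\cdot))$:
\begin{itemize}
\item in the high-disorder case, by comparison with $\kappa'(1)^L$ through \Cref{ass::CRI} and the Weierstrass product inequality;
\item in the sparse case, via $\kappa'(1-xL^{-\rho})^L\ge\frac12$.
\end{itemize}
This yields $\widehat\kappa_L(1-xv_L^{\rho})\le 1-x/c$ for $x\in(0,\delta)$. That is exactly the bound your near-$1$ estimate $\exp(-c\min\{s_L(1-u),1\})$ presupposes but does not prove. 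Since $\kappa_L$ is increasing on $[0,1]$, it immediately gives $\widehat\kappa_L\le 1-\epsilon/c$ on all of $[0,1-\epsilon v_L^{\rho}]$. No contraction argument is needed, and the constants are visibly uniform in $L$.
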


We prove now our non-critical CLTs.

\begin{proof}[Proof of~\Cref{thm::CLT-high-noncritical}] We first analyze the variance.  Using the Wiener-Ito decomposition, we have
$$ \Var(F_L) = \sum_{q=Q}^\infty q! \varphi_q^2 \int_{(\S^d)^2} \widehat \kappa_L(\langle x, y\rangle)^q \di x \di y \,.$$
In particular, since $|\widehat\kappa_L|\leq 1$ and $\sum q! \varphi_q^2< \infty$, we have
$$ \Var(F_L[Q]) \leq \Var(F_L) \lesssim \int_{(\S^d)^2} | \widehat \kappa_L(\langle x, y\rangle )|^Q \di x \di y\,.$$
Hence, by~\Cref{cor:boun-hatK-iper} and ~\Cref{thm:variance chaos-iperuranio}  we have $\Var(F_L)\asymp v_L^{d/(2\nu)}$.

\medskip\noindent
Let us move to the quantitative CLT. Let $(N_L)_L$ be a diverging sequence of natural numbers with $N_L\geq Q$. By triangular inequality,
\begin{equation}\label{ttriang} \dW(\widetilde F_L, Z) \leq \dW(\widetilde F_L, \widetilde F_{L,N_L}) + \dW(\widetilde F_{L,N_L}, Z)
\end{equation}
where $\widetilde F_{L,N_L}$ is as in~\Cref{lem_tailcontrol} (replacing $N$ with $N_L$). To bound the first term in~\eqref{ttriang}, one can use~\Cref{lem_tailcontrol}. Indeed,
\[
\dW\left(\widetilde F_L\,,\,\widetilde
F_{L,N_L}\right)\lesssim \E\left[\left(\widetilde F_L-\widetilde F_{L,N_L}\right)^2\right]^{1/2}\lesssim N_L^{-d/4}\,.
\]
Let us bound the other term. Using~\eqref{Iq-Hq} we can rewrite $\widetilde F_{L,N_L}$ as
$$ \widetilde F_{L,N_L} = \sum_{q=0}^{N_L} I_q\Big(c_{q,L}\mathcal{G}_q\sqrt{g_L}^{\otimes q})$$ with
$$c_{q,L}:=\varphi_q\,\Var(F_{L,N_L})^{-1/2}\leq |\varphi_q| \,\Var(F_L[Q])^{-1/2} \leq  c\,|\varphi_q| v_L^{-d/(2\nu)}$$
where the second-last inequality follows since $N_L\geq Q$ and the last follows by~\Cref{cor:boun-hatK-iper} (with $c>0$ independent of $Q$ and $L$). Using~\Cref{prop:malliavinstein} we have
\[
\dW\left(\widetilde F_{L,N_L}\,,\,Z\right)\le 4N_L\sum_{q=1}^{N_L}
	3^{2q} q! \mathcal{M}_{q}\,.
\]
with
\[
\mathcal{M}_q\le c\, \frac{\varphi_q^4}{v_L^{d/(2\nu)}}\max_{r=1,\dots,q-1}\int_{-1}^1 |\widehat \kappa_L(u)|^r\di \mu(u)\int_{-1}^1 |\widehat \kappa_L(u)|^{q-r}\di \mu(u)
\]
Now, using~\Cref{cor:boun-hatK-iper} and since $|\widehat \kappa_L|\le 1$, we obtain, for some $c>0$ independent of $q$ and $L$ (WLOG $r\leq Q-r$)
\begin{equation*}\label{explicit bounds for contractions}
\int_{-1}^1 |\widehat \kappa_L(u)|^r\di \mu(u)\int_{-1}^1 |\widehat \kappa_L(u)|^{q-r}\di \mu(u)  \le c \, v_L^{d/(2\nu)}
		\begin{cases}
			v_L^{d/(2\nu)}\,\, &\textnormal{if }d/(2\nu)<r\le {Q-r}\\
			v_L^{d/(2\nu)}\Big( \log(v_L^{-1})\Big)^2\,\, &\textnormal{if }d/(2\nu)=r=Q-r\\
			v_L^{d/(2\nu)} \log(v_L^{-1})\,\, &\textnormal{if }d/(2\nu)=r<Q-r\\
			v_L^{r}\,\, &\textnormal{if }r<d/(2\nu)<Q-r\\
			v_L^{ r} \log(v_L^{-1}) \,\, &\textnormal{if }r<d/(2\nu)=Q-r\\
			v_L^{ q -d/(2\nu)}\,\, &\textnormal{if } r\le Q-r <d/(2\nu)
		\end{cases}\,.
	\end{equation*}
    Hence, for every $q\geq Q$ we have
\[
\mathcal{M}_{q}^2\le c\,\varphi_q^4 \, v_L^{\delta}
\]
where $\delta<\min\{1,\frac{d}{2\nu},q-\frac{d}{2\nu}\}$. Thus,
\[
\dW\left(\widetilde F_{L,N_L}\,,\,N\right)\lesssim N_L\,9^{N_L}v_L^{\delta}\,,
\]
where the  bound follows from $\sum q! \varphi_q^2<\infty$. Thus, setting $N_L=\lfloor \eps (\log v_L^{-1}) \rfloor$, choosing $\eps>0$ sufficiently small, we get, for $L$ sufficiently large
$$N_L 9^{N_L}v_L^{\delta}<N_L^{-d/4}$$
that concludes the proof.
\end{proof}

\subsection{Non-central limit theorem}\label{non-central}
In this section, we prove the non-central limit theorem in the regime $Q<d/(2\nu)$. We begin by showing that~\Cref{Q-spherical} is well posed.

\begin{lemma}\label{welldefinitionZQ} If $\beta$ is $Q$-admissible (cf. \eqref{bbb}),  then $Z_Q(\beta)$ in \eqref{defZQ} is well defined in $L^2(\Omega)$. Moreover, we have $Z_Q(\beta)=I_q(\mathcal{G}_q\sqrt{|g_\beta|}^{\otimes q})$.
\end{lemma}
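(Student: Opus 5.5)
The plan is to identify the series \eqref{defZQ} with the Wiener--It\^o integral of the kernel
\[
f:=\mathcal G_Q\,\sqrt{|g_\beta|}^{\otimes Q},\qquad f(e_1,\dots,e_Q)=\prod_{i=1}^Q\sqrt{|g_\beta(\ell_i)|}\;\mathcal G_Q(e_1,\dots,e_Q).
\]
Since $\mathcal G_Q$ is symmetric in its arguments (the integrand in \eqref{defGaunt} is a product), $f$ is symmetric. It therefore suffices to show that $f\in\mathfrak H^{\odot Q}=\ell^2_s(E^Q)$, i.e. $\|f\|^2_{\mathfrak H^{\otimes Q}}<\infty$. Granting this, the isometry $I_Q:\mathfrak H^{\odot Q}\to\mathcal H_Q$ yields the claim. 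Indeed, for any increasing exhausting sequence of finite sets $A_n\subset E^Q$, the truncations $f\mathbf 1_{A_n}$ converge to $f$ in $\ell^2$. By linearity and \eqref{Wick Wiener-Ito}, $I_Q(f\mathbf 1_{A_n})$ is exactly the corresponding partial sum of \eqref{defZQ}, because $\sum_{(e_i)\in A_n} f(e_1,\dots,e_Q)\, I_Q(\delta_{e_1}\odot\dots\odot\delta_{e_Q})$ is precisely that partial sum. Hence the partial sums converge in $L^2(\Omega)$, independently of the exhaustion, to $I_Q(f)$. This gives both well-definedness and the identity $Z_Q(\beta)=I_Q(\mathcal G_Q\sqrt{|g_\beta|}^{\otimes Q})$.

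The key step is the computation of $\|f\|^2$, which reduces the question to the admissibility condition \eqref{bbb}. Expanding the square of the Gaunt integral as a double integral over $\S^d$ gives
\[
\|f\|^2=\sum_{e_1,\dots,e_Q}\prod_i|g_\beta(\ell_i)|\int_{(\S^d)^2}\prod_{i=1}^Q Y_{e_i}(x)Y_{e_i}(y)\,\di x\,\di y .
\]
Summing over $m_i$ first, the addition formula \eqref{defGell} turns each inner factor into $\frac{n_{\ell_i,d}}{\omega_d}G_{\ell_i,d}(\langle x,y\rangle)$. The double integral over the sphere then reduces, by the spherical change of variables already used in the proof of \Cref{thm:variance chaos-iperuranio}, to $\omega_d^2\int_{-1}^1\prod_i G_{\ell_i,d}(u)\,\di\mu(u)$. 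We thus expect
\[
\|f\|^2=\omega_d^{2-Q}\sum_{\ell_1,\dots,\ell_Q}\prod_{i=1}^Q|g_\beta(\ell_i)|\,n_{\ell_i,d}\int_{-1}^1G_{\ell_1,d}\cdots G_{\ell_Q,d}\,\di\mu,
\]
which is finite by \eqref{bbb}.

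The main obstacle is justifying the interchange of the sum over $(e_1,\dots,e_Q)$ with the integral over $(\S^d)^2$, since the individual terms have no sign. The plan is to avoid this by working first with finite truncations in $\ell_i\le N$: for a finite sum the computation above is exact. Each truncated quantity equals the squared $\ell^2$ norm of $f$ restricted to $\{\ell_i\le N\}$, so it is nonnegative and increasing in $N$. By the linearization formula \eqref{eq:linearizationformula}, iterated as in the derivation of \eqref{positivity of powers of Gell}, each integral $\int\prod_iG_{\ell_i,d}\,\di\mu$ is nonnegative, so the summands in \eqref{bbb} are nonnegative. Monotone convergence therefore identifies $\|f\|^2$ with the full (convergent) series. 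The same monotonicity shows that the $\ell^2$-convergence of truncations used in the first paragraph is legitimate.
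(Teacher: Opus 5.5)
Your proposal is correct and follows essentially the same route as the paper: you compute $\|\mathcal G_Q\sqrt{|g_\beta|}^{\otimes Q}\|^2$ via the addition formula \eqref{defGell} to reduce it to the admissibility series \eqref{bbb}, then approximate by truncations $\ell_i\le N$ and pass to the $L^2(\Omega)$ limit using the isometry of $I_Q$ together with \eqref{Wick Wiener-Ito}. Your extra care over interchanging sum and integral is sound but can be shortened: for each fixed $(\ell_1,\dots,\ell_Q)$ the sums over the $m_i$ are finite and yield the nonnegative block $\omega_d^{2-Q}\prod_i n_{\ell_i,d}\,\widehat{\mathcal G_Q}(\ell_1,\dots,\ell_Q)$, as in \eqref{additionmultiple}, so Tonelli applies directly without invoking the linearization formula.
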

\begin{proof}
First of all, by the addition formula (cf.~\eqref{defGell}) and the definition of generalized Gaunt integral (cf.~\eqref{defGaunt})  we have for some positive constant $c$
\[
\sum_{e_1,\dots,e_q\in E}\prod_{i=1}^q |g_\beta(\ell_i)|\,\mathcal{G}_q^2(e_1,\dots,e_q)=c\sum_{\ell_1,\dots,\ell_q\in\N}\prod_{i=1}^q |g_\beta(\ell_i)|n_{\ell_i,d}\int_{-1}^1 G_{\ell_1}(u) \dotsb  G_{\ell_q} \di \mu(u)<\infty\,.
\]
Thus, $\beta$ is $q$-admissible if and only if $\mathcal{G}_q\sqrt{|g_\beta|}^{\otimes q}\in \mathfrak H^{\odot q}$. Let $E_N=\{(\ell,m)\in E\,:\,\ell\le N\}$ and let  $h_N$ be the element of $ H^{\odot q}$ defined by
\[
h_N(e_1,\dots,e_q):=\sum_{e_1,\dots,e_q\in E_N}\mathcal{G}_q(e_1,\dots,e_q)\prod_{i=1}^q\sqrt{g_\beta}(\ell_i)\,\delta_{e_1}\otimes\dotsb\otimes\delta_{e_q}\,.
\]
 Using the linearity of $I_q$ and the definition of Wick product (cf.~\eqref{Wick Wiener-Ito}) we obtain
 \[
I_q(h_N)= \sum_{e_1,\dots,e_q\in E_N}\mathcal{G}_q(e_1,\dots,e_q)\prod_{i=1}^q\sqrt{|g_\beta(\ell_i)|}n_{\ell_i,d}\,\Wick{\zeta_{e_1}\dotsb\zeta_{e_q}}\,.
\]
Since $h_N \to \mathcal G_q \sqrt{g_\beta}$ in $\mathfrak H^{\odot q}$ and  $I_q/\sqrt{q!}$ is a linear isometry between $\mathfrak{H}^{\odot q}$ and $\mathcal{H}_q$, we have $Z_q(\beta)= I_q(\mathcal G_q\sqrt{g_\beta})$ is the $L^2(\Omega)$-limit of a sequence of $L^2(\Omega)$ variables $I_q(h_N)$ and hence the claim.
\end{proof}

In the proof of the non-central limit theorem, we use the following lemma, whose proof is deferred to~\Cref{lem:analytic conservation-tecn}.

\begin{lemma}\label{lem:analytic conservation} Let $f:(-1,1)\rightarrow\R$ be real analytic with radius of convergence at $0$ at least $1$, with $f^{(k)}(0)\ge0$ for $k\ge \bar{k}$ and $f\in L^p(\mu)$, $p\ge1$ integer.   Then
\[
\sum_{\ell_1, \dots, \ell_p \in \N} \prod_{i=1}^p   n_{\ell_i,d }\,\left(\int_{-1}^1f(u)G_{\ell_i}(u)\di \mu(u)\right)\widehat{\mathcal G_p}(\ell_1, \dots,\ell_p)=\int_{-1}^1f^p(u)\di \mu(u)\,.
\]
where \begin{equation}
    \label{eq:Ghat}
    \widehat{\mathcal G_p}(\ell_1, \dots,\ell_p) =  \int_{-1}^1 \prod_{i=1}^p  G_{\ell_i,d} (u) \di \mu(u)
\end{equation}
\end{lemma}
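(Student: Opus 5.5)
The identity is a Parseval/Plancherel statement for the $p$-fold product, and I would prove it by Gegenbauer expansion plus a positivity-based interchange of limits. Write $g_f(\ell)=\int_{-1}^1 f G_{\ell,d}\,\di\mu$. Formally, if $f=\sum_\ell n_{\ell,d}\, g_f(\ell)\, G_{\ell,d}$ (recall $\|G_{\ell,d}\|^2_{L^2(\mu)}=1/n_{\ell,d}$), then $f^p=\sum_{\ell_1,\dots,\ell_p}\prod_i n_{\ell_i,d}g_f(\ell_i)\prod_i G_{\ell_i,d}$. Integrating against $\mu$ term by term gives exactly the left-hand side with the weights $\widehat{\mathcal G_p}$. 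The whole difficulty is that $f$ is only assumed in $L^p(\mu)$, and may be unbounded near $u=1$ (this is the case of interest: $\beta$ with its singularity at $1$). So neither the series for $f$ nor the term-by-term integration is justified a priori, and the series on the left need not converge absolutely in an obvious way.

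\textbf{Reduction to monomials.} Write $f(u)=P(u)+\sum_{k\ge \bar k} a_k u^k$, where $P$ is a polynomial of degree $<\bar k$ and $a_k=f^{(k)}(0)/k!\ge0$. Each monomial $u^k$ has a finite Gegenbauer expansion with nonnegative coefficients: $u^k=\sum_{j\le k,\, j\equiv k\,(2)} c_{k,j}\,G_{j,d}(u)$ with $c_{k,j}\ge 0$. This is classical, since $u=G_{1,d}$ and the linearization formula \eqref{eq:linearizationformula} has positive coefficients. Hence $n_{\ell,d}\,g_f(\ell)=\sum_k a_k c_{k,\ell}+(\text{contribution of }P)$, and the polynomial part affects only finitely many $\ell<\bar k$.

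\textbf{Truncation and passage to the limit.} Set $f_r(u):=f(ru)$ for $0<r<1$. Then $f_r$ is a bounded function, a uniformly convergent power series on $[-1,1]$, with $f_r\to f$ pointwise on $(-1,1)$. For $f_r$ the identity holds trivially: the expansion converges uniformly, and the sum over $(\ell_1,\dots,\ell_p)$ can be rearranged freely because after expanding into monomials everything reduces to absolutely convergent sums of $\int u^{k_1+\dots+k_p}\di\mu$. It remains to let $r\to1$ on both sides. On the right I would use the positivity of $a_k$ near $u=1$: for $u\in[0,1)$ the tail $\sum_{k\ge\bar k}a_k (ru)^k$ increases in $r$, and for $u<0$ it is bounded in absolute value by the same quantity at $|u|$. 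Thus $|f_r(u)|\le C+|f(|u|)|+C'$, and dominated convergence with dominator in $L^p(\mu)$ gives $\int f_r^p\,\di\mu\to\int f^p\,\di\mu$. This uses the symmetry of $\mu$, and that $f\in L^p$ forces the even and odd parts of the tail to be in $L^p$ as well; I need to check that this is legitimate, possibly by splitting $f$ into its even and odd parts.

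\textbf{The main obstacle: the left-hand side.} We have $n_{\ell,d}\,g_{f_r}(\ell)=\sum_k a_k r^k c_{k,\ell}$ plus finitely many polynomial corrections, and this quantity increases monotonically in $r$ to $n_{\ell,d}\,g_f(\ell)$ for $\ell\ge\bar k$. Moreover $\widehat{\mathcal G_p}(\ell_1,\dots,\ell_p)\ge 0$, again by the positivity of the linearization coefficients: the product $\prod_i G_{\ell_i,d}$ expands into Gegenbauer polynomials with nonnegative coefficients, and integrating against $\mu$ retains only the constant term. So, apart from finitely many indices involving some $\ell_i<\bar k$, all terms are nonnegative and increase in $r$. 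Monotone convergence then passes the limit inside the infinite sum. The finitely-many-index remainder is handled separately: there one or more of the factors is fixed, and for each such term the sum over the remaining indices is itself of the same positive type, so it also converges by monotone convergence. Combining the two limits yields the identity, including the finiteness of the left-hand side, which follows because the right-hand side is finite.
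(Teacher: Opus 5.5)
Your Abel-type regularization $f_r(u)=f(ru)$ is a different route from the paper's. The paper does not regularize. It expands $\int_{-1}^1 fG_{\ell_i,d}\di\mu$ through the Taylor series and the nonnegative coefficients $a_{k,\ell}=\int_{-1}^1 u^kG_{\ell,d}\di\mu$. It then justifies every interchange of sums and integral at once by Fubini--Tonelli, with absolute majorant $\int_{-1}^1\big(\sum_k |f^{(k)}(0)|\,|u|^k/k!\big)^p\di\mu(u)<\infty$. This majorant is finite because the Taylor coefficients are nonnegative from $\bar k$ on and $f(|\cdot|)\in L^p(\mu)$. The latter fact also settles the point you flagged: by symmetry of $\mu$, $\int_{-1}^1|f(|u|)|^p\di\mu(u)=2\int_0^1|f(u)|^p\di\mu(u)$, so no even/odd splitting is needed. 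Your treatment of the right-hand side, of the identity for $f_r$, and of $g_{f_r}(\ell)\to g_f(\ell)$ is fine.

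The gap is in the passage $r\to1$ on the left-hand side. Fix a configuration $(\ell_i)_{i\in S}$ of indices below $\bar k$. Monotone convergence only shows that the nonnegative inner sum over the remaining indices converges to its $f$-counterpart in $[0,\infty]$. Meanwhile the prefactor $\prod_{i\in S}n_{\ell_i,d}\,g_f(\ell_i)$ may be negative. So the left-hand side is a finite combination of pieces of possibly opposite signs, and finiteness of $\int_{-1}^1 f^p\di\mu$ does not stop two of them from diverging to $+\infty$ and $-\infty$. Your sentence ``finiteness of the left-hand side follows because the right-hand side is finite'' is therefore not a valid deduction.

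What is missing is an $r$-independent summable majorant. Take $\tilde f(u):=\sum_k|f^{(k)}(0)|u^k/k!$.
\begin{itemize}
\item All Gegenbauer coefficients of $\tilde f$ are nonnegative, and $n_{\ell,d}|g_{f_r}(\ell)|\le n_{\ell,d}\,g_{\tilde f}(\ell)$ for all $\ell$ and all $r<1$.
\item For $\tilde f$, every term of the sum is nonnegative and increasing in $r$. Combine monotone convergence, the identity for $\tilde f_r$, and dominated convergence with $|\tilde f_r(u)|\le \tilde f(|u|)$. The dominator is in $L^p(\mu)$, since it differs from $f(|u|)$ by a bounded polynomial.
\item This gives $\sum_{\ell_1,\dots,\ell_p}\prod_i n_{\ell_i,d}\,g_{\tilde f}(\ell_i)\,\widehat{\mathcal G_p}(\ell_1,\dots,\ell_p)=\int_{-1}^1\tilde f^p\di\mu<\infty$.
\item Dominated convergence for series then passes to the limit termwise in the left-hand side for $f_r$. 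You no longer need to separate indices below $\bar k$.
\end{itemize}
With this patch your proof is complete. Note, though, that this majorant is exactly the one behind the paper's Tonelli step, so the regularization saves little over interchanging the sums directly.
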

We prove now our non-central limit theorems, i.e. the cases (iii) in \Cref{thm:high-disorder,thm:sparse}.

\begin{theorem}\label{thm::noCLT-high-critical}Let $F_L=F_L(\varphi)$ be as in \eqref{ourfunc}, with Hermite rank $Q$, $F_L[Q]$ as in \eqref{deco Y} and  $\varphi$ non-odd if $\kappa$ is odd. Let $\widehat \kappa_L$ be as in \eqref{defkappahatL} and assume either \Cref{ass::CRI}, $\kappa'(1)>1$ and $\kappa'(b)\neq 0$, or \Cref{ass:cri-pot}.  If $Q<d/(2\nu)$, then
	\begin{align}\label{var non cent}
		\Var(F_L)\asymp v_L^{Q}\, \,
	\end{align}
	and we have the following convergence  in distribution
	\begin{equation*}
		\widetilde  F_L \overset{d}{\rightarrow} {\rm sgn}(\varphi_Q)\, \widetilde Z_Q(\widehat \beta)\,\,.
	\end{equation*}
where  $Z_Q(\widehat \beta)$ is the non-Gaussian  defined as  \eqref{defZQ}, with $\beta$ and $\widehat\beta$ as in \Cref{thm:high-disorder} (for the high-disorder) or \Cref{thm:sparse} (for the sparse).
\end{theorem}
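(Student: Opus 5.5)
The strategy is to show that the $Q$th chaos dominates, and then that its normalized version converges in $L^2(\Omega)$ by comparing kernels in $\mathfrak H^{\odot Q}$.

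\textbf{Variance.} By \Cref{thm:variance chaos-iperuranio}, $\Var(F_L[Q])=v_L^Q(\gimel_Q+o(1))$. Here $\gimel_Q\neq0$: indeed $\varphi_Q\neq0$ by the definition of the Hermite rank, and $\beth_Q\ne0$ by \Cref{cor:boun-hatK-iper} unless $\kappa$ and $Q$ are both odd, a case excluded by \Cref{simmetria}. For the remainder, use $|\widehat\kappa_L|\le1$ and $\sum q!\varphi_q^2<\infty$ to get
\[
\Var(F_L-F_L[Q])\lesssim\int_{-1}^1|\widehat\kappa_L(u)|^{Q+1}\di\mu(u).
\]
By the last bound of \Cref{cor:boun-hatK-iper}, this is $O(v_L^{\min(Q+1,d/(2\nu))})$, up to a factor $\log(v_L^{-1})$ if $Q+1=d/(2\nu)$. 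Since $Q<d/(2\nu)$, this is $o(v_L^Q)$, which gives \eqref{var non cent} and
\[
\widetilde F_L-\frac{F_L[Q]}{\sqrt{\Var(F_L[Q])}}\to0\quad\text{in }L^2(\Omega).
\]

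\textbf{Kernel representation.} By \eqref{Iq-Hq} and \eqref{gL},
\[
\frac{F_L[Q]}{v_L^{Q/2}}=\varphi_Q\, I_Q\bigl(\mathcal G_Q\sqrt{g_L/v_L}^{\otimes Q}\bigr),
\]
with $g_L/v_L\to g_\infty$ pointwise by \eqref{puntuale}. Up to the constant $\omega_d$, $g_\infty=\beta_0^{-1}g_{\widehat\beta}$ for $\ell\ge1$. The claim then reduces to
\[
I_Q\bigl(\mathcal G_Q\sqrt{g_L/v_L}^{\otimes Q}\bigr)\to c\,I_Q\bigl(\mathcal G_Q\sqrt{|g_\infty|}^{\otimes Q}\bigr)\quad\text{in }L^2(\Omega).
\]
One point needs care: $g_L\ge0$, so the pointwise limit is $g_\infty\ge0$ and $|g_\infty|=g_\infty$. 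After dividing by the variance, the sign ${\rm sgn}(\varphi_Q)$ emerges.

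\textbf{$L^2$ convergence via norms.} Since $I_Q/\sqrt{Q!}$ is an isometry, it suffices to prove convergence of the kernels in $\mathfrak H^{\otimes Q}$. I would show that
\[
\|h_L\|^2\to\|h_\infty\|^2\quad\text{and}\quad\langle h_L,h_\infty\rangle\to\|h_\infty\|^2 .
\]
Expanding via the addition formula as in \Cref{welldefinitionZQ}, $\|h_L\|^2$ equals $c\int\widehat\kappa_L^Q\,\di\mu/v_L^Q$, which tends to $c\,\beth_Q$ by \Cref{cor:boun-hatK-iper}. On the other side, $\|h_\infty\|^2$ is the multiple sum $\sum\prod n_{\ell_i}g_\infty(\ell_i)\widehat{\mathcal G_Q}(\ell_1,\dots,\ell_Q)$. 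By \Cref{lem:analytic conservation}, applied to $f=\widehat\beta/\beta_0$, this sum equals $\int f^Q\di\mu=\beth_Q$. The hypotheses of that lemma must be checked: analyticity on $(-1,1)$ with radius of convergence at least $1$, eventually nonnegative Taylor coefficients, and $f\in L^Q(\mu)$. I expect the first two to be inherited from $\kappa_L$ via the limit construction in \Cref{fixed-point:high,fixed-point-sparse} and \ref{p3}. The same identity gives $Q$-admissibility, because all terms are nonnegative: $g_\infty\ge0$, and $\widehat{\mathcal G_Q}\ge0$ by the linearization formula \eqref{eq:linearizationformula}.

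\textbf{Cross term (main obstacle).} For the cross term $\langle h_L,h_\infty\rangle=\sum\prod n_{\ell_i}\sqrt{g_L(\ell_i)g_\infty(\ell_i)/v_L}\,\widehat{\mathcal G_Q}$, all summands are nonnegative, so Fatou gives
\[
\liminf\langle h_L,h_\infty\rangle\ge\|h_\infty\|^2 .
\]
Cauchy--Schwarz gives the matching upper bound $\limsup\langle h_L,h_\infty\rangle\le\|h_\infty\|^2$. Hence
\[
\|h_L-h_\infty\|^2=\|h_L\|^2-2\langle h_L,h_\infty\rangle+\|h_\infty\|^2\to0 .
\]
The only nontrivial input is the identity $\|h_\infty\|^2=\beth_Q$, that is, exchanging the limit with the multiple Gegenbauer sum; this is exactly what \Cref{lem:analytic conservation} provides. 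Combining these convergences with the first step and Slutsky's lemma yields $\widetilde F_L\to{\rm sgn}(\varphi_Q)\widetilde Z_Q(\widehat\beta)$ in $L^2$, hence in distribution.
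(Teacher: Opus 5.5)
Your proposal is correct and follows essentially the paper's route. You show that the $Q$th chaos dominates, then prove $L^2(\Omega)$ convergence of the normalized kernels from two inputs: pointwise convergence of $g_L/v_L$, and convergence of the squared norms, with the limiting norm identified as $\beth_Q$ through \Cref{lem:analytic conservation}. Your Fatou-plus-Cauchy--Schwarz treatment of the cross term is an equivalent form of the paper's generalized dominated convergence, and reading $\|h_L\|^2$ off the variance of $F_L[Q]$ is a slight simplification.

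The one step you leave as an expectation is that $\widehat\beta$ is analytic with radius at least $1$ and eventually nonnegative Taylor coefficients. This does not follow from pointwise convergence alone, nor from \Cref{fixed-point:high,fixed-point-sparse}. The paper obtains it from \Cref{prop:real_analytic_limit}: the nonnegative Taylor coefficients of $\beta_L$ give locally uniform bounds on the complex disk, and Vitali's theorem does the rest.
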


\begin{proof}	Let us start from the variance. From the assumption on the symmetry of $\kappa$ and $\varphi$ and since $Q < d/(2\nu)$, one can use~\Cref{thm:variance chaos-iperuranio} and hence
\begin{equation}\label{vvvv}
\Var(F_L[Q])=v_L^{Q} \Big( \varphi_Q^2 D_q^2 +o(1)\Big) \,.
\end{equation}
with $D_q = \sqrt{Q! \beth_Q} \omega_d >0$. Now using the Wiener-Ito decomposition, since $|\widehat \kappa_L|\leq 1$ we have
\[
\Var(F_L-F_L[Q])\lesssim \sum_{q=Q+1}^\infty q!\varphi_q^2\int_{-1}^1 \widehat \kappa_L(u)^{Q}\,\di  \mu(u) = o(v_L^{Q})\,.
\]
where we use $\sum q! \varphi_q^2<\infty$. The previous bound  concludes the proof of \eqref{var non cent} and implies that
\[
\E\left[\left(\widetilde F_L- \frac{F_L[Q]}{\sqrt{\Var(F_L[Q])}}\right)^2\right]\rightarrow\,0\,.
\]
 By~\eqref{Iq-Hq} we have
$$ \frac{F_L[Q]}{\sqrt{\Var(F_L[Q])}}  = \frac{\varphi_q}{\sqrt{\Var(F_L[Q])}}\int_{\S^d} H_q(\widehat T_L(x)) \di x $$
and, using~\eqref{vvvv}, one can
conclude showing that
\[
\frac{\int_{\S^d}H_{Q}(\widehat  T_L(x))\di x }{v_L^{Q/2}} \overset{d}{=}I_{Q}\left(\mathcal{G}_{Q}\sqrt{\frac{g_L}{v_L}}^{\otimes Q}\right)
\overset{d}{\longrightarrow} I_{Q}\left(\mathcal{G}_{Q}\sqrt{g_\infty}^{\otimes Q}\right)\,.
\]
By Hermite isometry,  \Cref{welldefinitionZQ} and recalling the puntual convergence  $\frac{g_L}{v_L}\to g_\infty$ (cf. \eqref{puntuale}), we can show that
\begin{align*}
&I_{Q}\left(\mathcal{G}_{Q}\sqrt{\frac{g_L}{v_L}}^{\otimes Q}\right)\overset{L^2(\Omega)}{\longrightarrow}I_{Q}\left(\mathcal{G}_{Q}\sqrt{g_\infty}^{\otimes Q}\right)\quad \ses\quad \left\|\mathcal G_Q\Bigg(  \sqrt{\frac{g_L}{v_L}}^{\otimes{Q}} - \sqrt{g_\infty}^{\otimes Q}\Bigg)\right\|_{\mathcal{H}^{\otimes Q}} \to 0\\
	&  \quad \ses \quad \sum_{e = (e_1, \dots, e_Q)\in E^Q} \mathcal G_Q(e)^2\left| \prod_{i=1}^Q \sqrt{\frac{g_L(e_i)}{v_L}
    } -\prod_{i=1}^Q  \sqrt{ g_\infty (e_i)} \right|^2 \to 0 \\
	&  \quad \ses \quad \sum_{\ell_1, \dots, \ell_Q \in \N} \left| \prod_{i=1}^Q  \sqrt{ \frac{g_L(\ell_i,1)}{v_L}} -\prod_{i=1}^Q  \sqrt{ g_\infty (\ell_i,1)} \right|^2 \sum_{\substack{m_1, \dots, m_Q\\e_i = (\ell_i, m_i)\in E \; i=1, \dotsc,Q}} \mathcal G_Q(e_1, \dotsc, e_Q)^2 \to 0
\end{align*}
 where the last implication follows, since $g_L(\ell,m)$ is constant over $m$.

\noindent
We observe that for every $\ell_1, \dots, \ell_Q \in \N$ we have by addition formula  (cf. \eqref{defGell}), the definition of a generalized Gaunt integral (cf. \eqref{defGaunt}) and the orthogonality of spherical harmonics
\begin{align} \sum_{\substack{m_1, \dots, m_Q \\ e_i = (\ell_i, m_i)\in E \; i=1, \dotsc,Q}} \mathcal G_Q(e_1, \dotsc, e_Q)^2= & \sum_{m_1=1}^{n_{\ell_1,d}}\dotsb \sum_{m_Q=1}^{n_{\ell_Q,d}}  \int_{\S^d\times \S^d} \prod_{i=1}^Q Y_{\ell_i, m_i}(x)Y_{\ell_i, m_i}(y) \di x \di y   \nonumber\\
= &	\int_{\S^d\times\S^d} \prod_{i=1}^Q \sum_{m_i = 1}^{n_{\ell_i,d}} Y_{\ell_i,m_i}(x) Y_{\ell_i, m_i} (y) \di x \di y \nonumber\\
= & \int_{\S^d\times \S^d} \prod_{i=1}^Q \frac{ n_{\ell_i,d}}{\omega_d}G_{\ell_i,d}(\langle x, y\rangle ) \di x \di y\nonumber \\
= &\omega_d^{2-Q}\widehat{\mathcal G_Q}(\ell_1, \dots,\ell_p) \prod_{i=1}^Q n_{\ell_i,d} \label{additionmultiple}
	\end{align}
	where the second-last identity follows from the additional formula for spherical harmonics (cf.~\eqref{defGell}) and the last recalling the definition of ${\mathcal G_Q}$ (cf.~\eqref{eq:Ghat}). In particular,
we need to prove that
 $$S_Q  = \sum_{\ell_1, \dots, \ell_Q \in \N} \prod_{i=1}^Q  \left|\sqrt{ n_{\ell_i,d }\frac{g_L(\ell_i)}{v_L}} -\prod_{i=1}^Q  \sqrt{ n_{\ell_i,d}\, g_\infty (\ell_i)} \right|^2 \widehat{\mathcal G_Q}(\ell_1, \dots,\ell_Q) \to 0\,. $$
Since
$$ \left| \prod_{i=1}^Q  \sqrt{ n_{\ell_i,d }\frac{g_L(\ell_i)}{v_L}} -\prod_{i=1}^Q  \sqrt{ n_{\ell_i,d} g_\infty (\ell_i)} \right|^2  \leq 2 \prod_{i=1}^Q   n_{\ell_i,d }\frac{g_L(\ell_i)}{v_L} +2\prod_{i=1}^Q   n_{\ell_i,d} g_\infty (\ell_i)\,   $$
setting
\begin{align}
    &R_L:=\sum_{\ell_1, \dots, \ell_Q \in \N} \prod_{i=1}^Q  n_{\ell_i,d }\frac{g_L(\ell_i)}{v_L}\widehat{\mathcal G_Q}(\ell_1, \dots,\ell_Q)\\
    &R_\infty:=\sum_{\ell_1, \dots, \ell_Q \in \N} \prod_{i=1}^Q  n_{\ell_i,d }g_\infty(\ell_i)\widehat{\mathcal G_Q}(\ell_1, \dots,\ell_Q)\label{Rinf}
\end{align}
if we prove that $R_L \to R_\infty<\infty$, the claim follows by ~\eqref{puntuale} and generalized dominated convergence.
Since $\widehat{\beta}$ is the pointwise limit of $\widehat \kappa_L$, by \Cref{prop:real_analytic_limit} below also $\widehat \beta$ satisfies the hypothesis of~\Cref{lem:analytic conservation}. Thus, since
$$g_L(\ell)= \int_{-1}^1 \widehat \kappa_L(u) G_{\ell,d}(u)\di \mu(u), \qquad g_\infty(\ell) = \frac{1}{\beta_0} \int_{-1}^1 \widehat \beta(u) G_{\ell,d}(u) \di \mu(u)\,$$ using~\Cref{lem:analytic conservation} we obtain
$$ R_L = \frac 1 { v_L^Q} \int_{-1}^1\widehat \kappa_L(u)^Q\di \mu(u)\qquad R_\infty= \frac 1 {\beta_0^Q}\int_{-1}^1\widehat \beta(u)^Q \di \mu(u) = \beth_q\,.$$
We conclude using~\Cref{cor:boun-hatK-iper}.
\end{proof}

In the following two subsections, we prove ~\Cref{cor:boun-hatK-iper} and ~\eqref{puntuale} in both cases, high-disorder and sparse.

\subsection{Proof of~\texorpdfstring{\Cref{cor:boun-hatK-iper}}{Corollary~\ref{cor:boun-hatK-iper}} and \eqref{puntuale} in the high-disorder regime}\label{Prop:high}
The next lemma, whose proof is deferred to~\Cref{fixed-point:high-tecn}, establishes the existence and uniqueness of the interior fixed point $b$ in the high-disorder regime and describes the corresponding renormalized limiting function.

 \begin{lemma}\label{fixed-point:high}
Let $\kappa$ be such that $\kappa'(1)>1$. Assume that \Cref{ass::CRI} holds.
Then there exists a unique fixed point $b\in (-1,1)$, satisfying $b\ge0$ and $\kappa'(b)\in[0,1)$. Moreover, $\kappa'(b)=0$ is possible only if $b=0$. Assume that $\lambda:=\kappa'(b)\neq 0$ and set
\[
\nu = -\frac{\log(\kappa'(b))}{\log (\kappa'(1))}\,.
\]
Let $K=(-1,1)$ if $\kappa$ is either odd or even, and $K=[-1,1)$ otherwise. Then, for all $x\in K$, we have
\begin{equation}\label{betaL-high}
\beta_L(x) := \frac{ \kappa_L(x) - b}{\lambda^L} \to \beta(x).
\end{equation}
Moreover,
\begin{align}
\label{dominazione-high}
|\beta_L(x)|\lesssim (1-x^{2})^{-\nu}, &\qquad  x\in K\\
\label{bound-sup-high}
\beta_L(1-x \lambda^{n/\nu}) \geq c_b  \lambda^{-n} ,
&\qquad x\in\big( c_b\lambda^{1/\nu}, c_b\big),  \quad 0\leq  n \leq L
\end{align}
where $c_b = (1-b)/2$.
Moreover, we have  $0\leq \beta_L(x)\uparrow \beta(x)$ for $x\geq b$.
 \end{lemma}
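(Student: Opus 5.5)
\emph{Proposed approach.} I would first settle the fixed-point structure using only the properties \ref{p0}--\ref{p3}, and then obtain the renormalized limit by a Koenigs-type argument, with explicit control of how long an orbit takes to leave a neighbourhood of $\pm1$.

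\emph{Existence and uniqueness of $b$.} Set $g(u)=\kappa(u)-u$. Then $g(0)=\kappa(0)=a_0^2\ge0$ and $g(1)=0$. Since $g'(1)=\kappa'(1)-1>0$, $g$ is negative just to the left of $1$, so $g$ has a zero $b\in[0,1)$. On $[0,1]$ the function $\kappa$ is convex by \ref{p3}, so $g$ is convex and vanishes at most twice there; hence $b$ is the unique zero in $[0,1)$. Convexity also gives
\[
\kappa'(b)\le \frac{\kappa(1)-\kappa(b)}{1-b}=1,
\]
with strict inequality because $\kappa$ is not affine. Also $\kappa'(b)\ge0$ by \ref{p3}. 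If $\kappa'(b)=0$ with $b>0$, then every coefficient $q!a_q^2$ with $q\ge1$ vanishes, since each term of $\kappa'(b)=\sum_q q\,q!a_q^2 b^{q-1}$ is nonnegative. Then $\kappa$ is constant, which contradicts $\kappa(1)=1$ together with $\kappa'(1)>1$.

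To exclude fixed points in $(-1,0)$, I would write $\kappa=\kappa_e+\kappa_o$ as even plus odd part, both with nonnegative coefficients, and compare $\kappa(-v)=\kappa_e(v)-\kappa_o(v)$ with $-v$ for $v\in(0,b]$, using $\kappa(v)\ge v$ there. If $\kappa$ is odd, then $b=0$ and the symmetric situation is immediate. This sign bookkeeping on the negative side is a minor but fiddly point.

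\emph{Convergence \eqref{betaL-high}.} For $x\in K$ the orbit $y_n=\kappa_n(x)$ converges to $b$. Indeed, after one step $|y_1|\le\kappa(|x|)<1$ by \ref{p1}. The endpoint $x=-1$ is excluded exactly when $\kappa(-1)=\pm1$, which is the odd or even case. On $[0,1)$ the points above $b$ decrease to $b$ and the points below increase to $b$. Near $b$, analyticity gives $\kappa(y)-b=\lambda(y-b)\bigl(1+O(|y-b|)\bigr)$. Once the orbit is in a small neighbourhood of $b$ it converges geometrically, and
\[
\beta_L(x)=\frac{\kappa_{n_0}(x)-b}{\lambda^{n_0}}\prod_{n\ge n_0}^{L-1}\frac{\kappa(y_n)-b}{\lambda(y_n-b)}
\]
converges, because $\sum_n|y_n-b|<\infty$. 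This is the classical Koenigs argument.

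For $x\ge b$, convexity on $[b,1]$ gives $\kappa(y)-b\ge\lambda(y-b)$. Hence $\beta_{L+1}\ge\beta_L\ge0$, which yields the monotone statement.

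\emph{Quantitative bounds \eqref{dominazione-high} and \eqref{bound-sup-high}.} This is the step I expect to be the main obstacle, because the constants must be uniform in $L$. Near $1$, \Cref{ass::CRI} (in its form with slope $\kappa'(1)$) gives
\[
1-\kappa(u)=\kappa'(1)(1-u)\bigl(1+O((1-u)^{\gamma-1})\bigr).
\]
So the number $n(x)$ of steps needed to leave $(1-\delta,1]$ satisfies
\[
\kappa'(1)^{n(x)}\asymp (1-x)^{-1},
\]
where the products of the factors $1+O(\cdot)$ are summable along the geometric orbit. After leaving, the orbit lies in a compact subset of $(-1,1)$. There the uniform Koenigs estimate gives $|\kappa_{m}(y)-b|\lesssim\lambda^{m}$, so
\[
|\beta_L(x)|\lesssim\lambda^{-n(x)}=\kappa'(1)^{\nu n(x)}\asymp(1-x)^{-\nu}.
\]
Near $-1$ in the odd or even case I would argue symmetrically, since $\kappa_L(-x)=\pm\kappa_L(x)$; otherwise the first step already maps $-1$ into $(-1,1)$.

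For the lower bound \eqref{bound-sup-high}, take $u=1-x\lambda^{n/\nu}=1-x\,\kappa'(1)^{-n}$. The same expansion shows that after $n$ steps the orbit sits at $1-\Theta(x)$, which lies above $b$ when $x<c_b$ (with $c_b=(1-b)/2$, so $1-c_b$ is the midpoint of $[b,1]$). Monotonicity for points above $b$ then gives
\[
\beta_L(u)\ge\lambda^{-n}\,\beta_{L-n}\bigl(\kappa_n(u)\bigr)\ge c_b\lambda^{-n}.
\]
The delicate part is keeping the error factors uniformly bounded for all $0\le n\le L$; I would handle this by telescoping the ratios $(1-\kappa(y))/(\kappa'(1)(1-y))$ along the orbit and summing their logarithms.
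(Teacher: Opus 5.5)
Your architecture matches the paper's. You use a Koenigs-type argument near $b$ for the convergence of $\beta_L$, the tangent-line inequality $\kappa(y)-b\ge\lambda(y-b)$ on $[b,1]$ for $0\le\beta_L\uparrow\beta$, and an exit-time estimate near $1$ based on \Cref{ass::CRI} (correctly read with slope $\kappa'(1)$) for \eqref{dominazione-high}. For the domination, your route differs slightly from the paper's but is fine:
you bound $\beta_L$ directly, using at most $n(x)$ steps in $(1-\delta,1]$ and then uniform boundedness of $\beta_m$ on a compact set. The paper instead bounds the limit $\beta$ through $\beta=\lambda^{-n}\,\beta\circ\kappa_n$ and \eqref{pp}, and then uses $\beta_L\le\beta$ on $[b,1)$. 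Your version does not need monotonicity of $\beta$.

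Your self-contained fixed-point analysis replaces a citation in the paper. On the negative side you still owe a line showing that orbits starting in $(-1,0)$ reach $b$. If $b=0$, use $|\kappa_n(x)|\le\kappa_n(|x|)$. If $b>0$, then $\kappa(0)>0$ and there is no fixed point in $(-1,0]$, so $\kappa(u)>u$ there and the orbit enters $[0,1)$.

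The genuine gap is in \eqref{bound-sup-high}, which has explicit constants. With $u=1-x\lambda^{n/\nu}$, you need $\kappa_n(u)-b\ge c_b$, i.e. $1-\kappa_n(u)\le c_b$, for every $x<c_b$.

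Knowing that the orbit sits at $1-\Theta(x)$ only gives $1-\kappa_n(u)\le Cx$, where $C$ comes from the product of error factors and may exceed $1$. That yields $\beta_L(u)\gtrsim\lambda^{-n}$ on a shrunken range $x<c_b/C$, not the stated inequality. Moreover, the expansion from \Cref{ass::CRI} is only valid for $1-y$ small, whereas here $1-\kappa_n(u)$ goes up to $c_b$, which equals $1/2$ when $b=0$.

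The missing idea is that the error is one-sided by convexity. The map $\kappa_n$ is convex and increasing on $[0,1]$, with $\kappa_n(1)=1$ and $\kappa_n'(1)=\kappa'(1)^n=\lambda^{-n/\nu}$. The tangent line at $1$ therefore gives $\kappa_n(1-h)\ge 1-\lambda^{-n/\nu}h$. Hence $\kappa_n(u)\ge 1-x>1-c_b=b+c_b$, and so $\beta_L(u)=\lambda^{-n}\beta_{L-n}(\kappa_n(u))\ge\lambda^{-n}(\kappa_n(u)-b)\ge c_b\lambda^{-n}$.

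Equivalently, every ratio $(1-\kappa(y))/(\kappa'(1)(1-y))$ is at most $1$ on $[0,1)$. So the telescoping and uniformity control you anticipate is unnecessary for the lower bound. That control, and \Cref{ass::CRI} itself, is only needed for the upper bound \eqref{dominazione-high}.
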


Using the previous lemma, one can derive the following asymptotics for
\(\int_{-1}^1 \beta_L^q(u) \,\di \mu(u).
\)

\begin{proposition}\label{prop: moments of betaL}
Let $\beta_L$, $\beta$, $\lambda$ be as defined in \Cref{fixed-point:high}. Then, we have the following asymptotics for fixed $q\in\N$, $q\ge2$.

\smallskip
\noindent
If $q<\frac{d}{2\nu}$, then as $L\to\infty$
\begin{equation}
\label{eq: moment of betaL int case intro}
\int_{-1}^1 \beta_L^q(u)\,\di \mu(u)
=
\int_{-1}^1 \beta^q(u)\,\di \mu(u)\,(1+o(1))\,,
\end{equation}

\smallskip
\noindent
If $q\ge \frac{d}{2\nu}$, and $q$ is even whenever $\kappa$ is odd, then
\begin{align}\label{eq: moments of betaL non int case intro}
\int_{-1}^1 \beta_L^q(u)\,\di \mu(u)
\asymp
\begin{cases}
L & \textnormal{if } q=\frac{d}{2\nu},\\
\lambda^{(\frac{d}{2\nu}-q)L} & \textnormal{if } q>\frac{d}{2\nu}.
\end{cases}
\end{align}

\smallskip
\noindent
Moreover, we always have (with no restriction on $q, \kappa$)
\begin{align}\label{eq: moments of |betaL| intro}
\int_{-1}^1 |\beta_L(u)|^q\,\di \mu(u)
\asymp
\begin{cases}
1 & \textnormal{if } q<\frac{d}{2\nu},\\
L & \textnormal{if } q=\frac{d}{2\nu},\\
\lambda^{(\frac{d}{2\nu}-q)L} & \textnormal{if } q>\frac{d}{2\nu}\,.
\end{cases}
\end{align}
\end{proposition}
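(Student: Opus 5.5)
The plan is to study the integrals near the endpoint $u=1$, where $\di\mu(u)\asymp(1-u)^{d/2-1}\di u$, and near $u=-1$ when $K=(-1,1)$. We use the three pieces of \Cref{fixed-point:high}: pointwise convergence $\beta_L\to\beta$, the domination \eqref{dominazione-high}, and the lower bound \eqref{bound-sup-high}. The threshold comes from the computation $\int_0^1 t^{-q\nu}t^{d/2-1}\di t<\infty$ if and only if $q\nu<d/2$.

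\textbf{Case $q<d/(2\nu)$.} The domination $|\beta_L(u)|^q\lesssim(1-u^2)^{-q\nu}$ gives a $\mu$-integrable majorant, since $q\nu<d/2$ both at $u=1$ and at $u=-1$. Dominated convergence then yields $\int\beta_L^q\di\mu\to\int\beta^q\di\mu$, which is \eqref{eq: moment of betaL int case intro}. To write this as $(1+o(1))$ times the limit we need $\int\beta^q\di\mu\neq0$. For even $q$ this holds because $\beta\ge c_b\lambda^{-n}>0$ on a set of positive measure, by \eqref{bound-sup-high}. For odd $q$ one interprets the statement as convergence, or argues positivity on $[b,1)$, where $\beta\ge0$ is large near $1$. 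The same argument applied to $|\beta_L|^q$ gives the first line of \eqref{eq: moments of |betaL| intro}, with the lower bound again coming from \eqref{bound-sup-high}.

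\textbf{Upper bounds for $q\ge d/(2\nu)$.} Split $[-1,1]$ dyadically near $1$ into the shells $A_n=\{1-u\in(c_b\lambda^{(n+1)/\nu},c_b\lambda^{n/\nu}]\}$ for $0\le n<L$, together with the core $\{1-u\le c_b\lambda^{L/\nu}\}$.
\begin{itemize}
\item On $A_n$ the domination gives $|\beta_L|^q\lesssim\lambda^{-nq}$, and $\mu(A_n)\asymp\lambda^{nd/(2\nu)}$. Each shell therefore contributes $\lambda^{n(d/(2\nu)-q)}$.
\item On the core we need a better bound than \eqref{dominazione-high}, namely $|\beta_L|\le (1-b)\lambda^{-L}$, which follows from $|\kappa_L-b|\le 1+b$. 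The core then contributes $\lambda^{-Lq}\lambda^{Ld/(2\nu)}$.
\end{itemize}
Summing over $n$, the total is $\asymp L$ when $q=d/(2\nu)$ and $\asymp\lambda^{(d/(2\nu)-q)L}$ when $q>d/(2\nu)$. The region away from $1$ contributes $O(1)$. The endpoint $-1$ is handled in the same way when $K=(-1,1)$, using symmetry: $\kappa_L(-u)=\pm\kappa_L(u)$ for even or odd $\kappa$.

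\textbf{Lower bounds.} By \eqref{bound-sup-high}, on each shell $\beta_L\ge c_b\lambda^{-n}>0$, so every shell contributes at least $\gtrsim\lambda^{n(d/(2\nu)-q)}$, with no sign cancellation there. This gives the matching lower bound for $\int|\beta_L|^q\di\mu$, and also for $\int\beta_L^q\di\mu$ once the remaining region is shown not to cancel it.

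\textbf{Main obstacle: signed integrals for odd $q$.} The hard part is \eqref{eq: moments of betaL non int case intro} for odd $q$, where $\beta_L$ may be negative on $[-1,b)$. The plan is to show the negative part contributes only $O(1)$, or in the odd-$\kappa$ case to exclude odd $q$ as assumed. Near $-1$ with $\kappa$ not odd or even, $K=[-1,1)$: then $\kappa_L(-1)$ stays bounded away from the repelling point $1$, so $\beta_L$ is bounded there up to a constant times the $L^q(\mu)$-integrable majorant. Negative contributions are then $O(1)$, which is negligible against the diverging positive shell sums ($L$ or $\lambda^{(d/(2\nu)-q)L}\to\infty$).

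When $\kappa$ is even, $\beta_L(-u)=\beta_L(u)$, so both endpoints carry positive mass and add. When $\kappa$ is odd, $\beta_L(-u)=-\beta_L(u)$ (this forces $b=0$), so even $q$ is needed to avoid cancellation, which matches the hypothesis.
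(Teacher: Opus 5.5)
Your proposal is correct and follows essentially the same route as the paper. For $q<d/(2\nu)$ you use dominated convergence via \eqref{dominazione-high}. For $q\ge d/(2\nu)$ you split near $u=1$ into geometric scales $1-u\asymp c_b\lambda^{n/\nu}$, with the upper bound from \eqref{dominazione-high} and the lower bound from \eqref{bound-sup-high}. You use the crude bound $|\beta_L|\lesssim\lambda^{-L}$ on $[1-c_b\lambda^{L/\nu},1]$, an $O(1)$ contribution from the remaining region (which also kills any sign cancellation), and reflection at $-1$ when $\kappa$ is even or odd.

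One point needs tidying: the $O(1)$ bound on $[-1,(1+b)/2]$ in the non-symmetric case. It should come from uniform boundedness of $\beta_L$ there. You can get this either from the uniform Cauchy estimate in the proof of \Cref{fixed-point:high}, or from $\beta_L(x)=\lambda^{-1}\beta_{L-1}(\kappa(x))$ together with $\kappa([-1,(1+b)/2])$ being a compact subset of $(-1,1)$. It cannot come from the majorant $(1-u^2)^{-\nu}$, which is not $L^q(\mu)$-integrable at $-1$ once $q\ge d/(2\nu)$.
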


\begin{proof}

If $\kappa$ is odd and $q$ is odd, then $\beta_L^q$ is odd and therefore
\[
\int_{-1}^1 \beta_L^q(u)\,\di\mu(u)=0
\]
for every $L$.  We note that also $\beta$ is odd (indeed $b=0$) and hence the claim holds.

\medskip
\noindent
{\bfseries Case $\boldsymbol{q<\frac{d}{2\nu}}$ .}
The claim follows from dominated convergence.
Indeed, $\beta_L(u)\to\beta(u)$ pointwise on $(-1,1)$ and by~\eqref{dominazione-high}
\[
|\beta_L(u)|^q(1-u^2)^{\frac d2-1}
\lesssim
(1-u^2)^{-q\nu+\frac d2-1}.
\]
The right-hand side is integrable on $[-1,1]$ precisely when $q<\frac{d}{2\nu}$, which proves~\eqref{eq: moment of betaL int case intro}. \\

\medskip
\noindent
{\bfseries Case $\boldsymbol{q\ge\frac{d}{2\nu}}$.}
Let $r=1-c_b$, where $c_b$ is defined in~\Cref{fixed-point:high}.
The asymptotic behaviour of the integral is entirely driven by the singularity of $\beta$ at $u=1$.
Accordingly, we write
\[
\int_{-1}^1 \beta_L^q(u)\,\di\mu(u)
=
\int_{K_r}\beta_L^q(u)\,\di\mu(u)
+
\eta\int_r^1 \beta_L^q(u)\,\di\mu(u),
\]
where $K_r=(-r,r)$ and $\eta=2$ if $\kappa$ is even or odd, while $K_r=[-1,r)$ and $\eta=1$ otherwise.

Since $\beta_L$ is uniformly bounded on $K_r$, the first term is uniformly bounded in $L$; hence
\[
\int_{-1}^1 \beta_L^q(u)\,\di\mu(u)
\sim
\eta\int_r^1 \beta_L^q(u)\,\di\mu(u),
\]
provided that the latter diverges.

To estimate the dominant contribution, we decompose
\begin{equation}\label{I+II}
\int_r^1 \beta_L^q(u)\,\di\mu(u)
=
\int_r^{1-\epsilon_L}\beta_L^q(u)\,\di\mu(u)
+
\int_{1-\epsilon_L}^1\beta_L^q(u)\,\di\mu(u)
=
\mathrm{I} + \mathrm{II},
\end{equation}
where we set $1-r=(1-b)/2=c_b$ and  $\epsilon_L=(1-r)\lambda^{L/\nu}=c_b\lambda^{L/\nu}$.

\medskip
\noindent
{\itshape Upper bound for $\mathrm I$.}
Using~\eqref{dominazione-high},
\[
\mathrm{I}
\lesssim
\int_r^{1-\epsilon_L}(1-u)^{-q\nu}\,\di\mu(u).
\]
Since $\di\mu(u)\asymp (1-u)^{\frac d2-1}$ as $u\to1$,
\[
\mathrm{I}
\lesssim
\int_{\epsilon_L}^{1-r} x^{\frac d2-1-q\nu}\,\di x.
\]
Hence
\[
\mathrm{I}
\lesssim
\begin{cases}
\log(\epsilon_L^{-1})\asymp L
& \textnormal{if } q=\frac{d}{2\nu},\\
\epsilon_L^{\frac d2-q\nu}
\asymp
\lambda^{(\frac{d}{2\nu}-q)L}
& \textnormal{if } q>\frac{d}{2\nu}.
\end{cases}
\]

\medskip
\noindent
{\itshape Lower bound for ${\mathrm I}$.}
Partitioning the interval into geometric scales,
\[
\mathrm I
=
\sum_{i=0}^{L-1}
\int_{1-\epsilon_i}^{1-\epsilon_{i+1}}
\beta_L^q(u)\,\di\mu(u)\gtrsim
\sum_{i=0}^{L-1}
\lambda^{\frac{d}{2\nu}i}
\int_{c_b\lambda^{1/\nu}}^{c_b}
\beta_L^q(1-x\lambda^{i/\nu})
x^{\frac d2-1}\,\di x,
\]
where the last inequality follows from the change of variable $u = 1 - x\lambda^{i/\nu}$. Therefore, using~\eqref{bound-sup-high}, we obtain
\[
\mathrm I
\gtrsim
\sum_{i=0}^{L-1}\lambda^{(\frac{d}{2\nu}-q)i}
\asymp
\begin{cases}
L & \textnormal{if } q=\frac{d}{2\nu},\\
\lambda^{(\frac{d}{2\nu}-q)L} & \textnormal{if } q>\frac{d}{2\nu}.
\end{cases}
\]

\medskip
\noindent
{\itshape Estimate for $\mathrm{II}$.}
By definition of $\beta_L$ (see~\eqref{betaL-high}) and since $\kappa_L(u)\le1$, we have $\beta_L(u)\lesssim\lambda^{-L}$.
Hence
\[
\mathrm{II}
\lesssim
\lambda^{-qL}
\int_{1-\epsilon_L}^1 (1-u)^{\frac d2-1}\,\di u
\lesssim
\lambda^{(\frac{d}{2\nu}-q)L}
\lesssim \mathrm{I}.
\]

Combining~\eqref{I+II} with the previous bounds yields~\eqref{eq: moments of betaL non int case intro}.

\medskip
\noindent
{\bfseries Proof of~\eqref{eq: moments of |betaL| intro}.}
The same argument applies to $|\beta_L|^q$.
Indeed $|\beta_L|$ is uniformly bounded on $K_r$, while $\beta_L(u)\ge0$ for $u\ge r$.
\end{proof}
\begin{remark}\label{rem:superattractive}
The assumption $\kappa'(0)\neq0$ is imposed for technical reasons.
If $\kappa'(0)=0$, then $0$ becomes a super-attractive fixed point and the analysis requires a different renormalization scheme; we leave this case for future investigation.

The requirement $q$ even when $\kappa$ odd (which is relevant when $\beta\notin L^q(\mu)$) excludes the situation in which both $\kappa$ and $q$ are odd. In this case we trivially have
\begin{equation}
\label{eq:moment of betaL odd odd case}
\int_{-1}^1 \beta_L^q(u)\,\di \mu(u)=0
\qquad \forall\,L\in\N .
\end{equation}
An analogous statement holds for the variance of $F_L[q]$. Indeed, if both $\kappa$ and $q$ are odd,  then
\[
\Var(F_L[q])
=
q!a_q^2
\int_{(\S^d)^2}
\widehat\kappa_L(\langle x,y\rangle)^q
\,\di x\,\di y
=
0 .
\]
\end{remark}

\begin{remark}
We note that $\beta\in L^q(\mu)$ if and only if $q< \frac{d}{2\nu}$.

\smallskip \noindent
If $q<d/(2\nu)$,  by dominated convergence  we have $$\|\beta\|_{L^q(\mu)} = \lim_{L\to + \infty} \|\beta_L\|_{L^q(\mu)}<\infty\,.$$ Otherwise, for $q\geq d/(2\nu)$,we have $$
    \int_{-1}^1 |\beta(u)|^q\di \mu(u) \geq \int_{b}^1 |\beta(u)|^q \di \mu(u) = \lim_{L\to + \infty} \int_{b}^1 |\beta_L(u)|^q \di \mu(u) = + \infty $$
    where the identity follows by Beppo-Levi theorem's ($\beta_L$ is positive and increases on $[b,1)$.
\end{remark}

Using the previous proposition, we have the following corollary, that is ~\Cref{cor:boun-hatK-iper} in the high-disorder case.
\begin{corollary}
\label{cor:boun-hatK-high}
Let $\widehat \kappa_L$ be as in \eqref{defkappahatL}, $\widehat \beta$ as in \Cref{cor:boun-hatK-iper} and assume $\kappa'(1)>1$, $\lambda=\kappa'(b)\neq0$   and \Cref{ass::CRI}. Then, we have the following asymptotics for fixed $q\in\N$, $q\ge2$.

\noindent
If $q<\frac{d}{2\nu}$, as $L\to\infty$ we have
	\begin{equation*}
		\int_{-1}^1 \widehat \kappa_L(u)^q\,\di \mu(u)
		=\frac{\lambda^{Lq}}{(1-b)^q} \int_{-1}^1  \widehat \beta(u)^q\,\di \mu(u)\,(1+o(1))\,.
	\end{equation*}
{where $\,\int_{-1}^1  \widehat \beta^q\,\di \mu=0$  $\iff$ $\kappa$ and $q$ are both odd.}

    \noindent
If $q\ge \frac{d}{2\nu}$, and $q$ is even whenever $\kappa$ is odd, we have

$$ \int_{-1}^1 \widehat \kappa_L(u)^q \di \mu (u)\asymp \frac{\lambda^{Lq}}{(1-C_0(L))^q} \int_{-1}^1 \beta_L(u)^q \di \mu(u)\asymp
\begin{cases}
			\lambda^{Lq} L &\textnormal{if } q=\frac{d}{2\nu} \\
			\lambda^{\frac{d}{2\nu}L } &\textnormal{if } q>\frac{d}{2\nu}
		\end{cases}\,.
        $$
{Moreover, we always have (with no restrictions on $q,\kappa$)}
$$ \int_{-1}^1  |\widehat\kappa_L(u)|^q \di \mu (u)\lesssim
\begin{cases}
            \lambda^{Lq} &\textnormal{if } q<\frac{d}{2\nu} \\
			\lambda^{Lq} L &\textnormal{if } q=\frac{d}{2\nu} \\
			\lambda^{\frac{d}{2\nu}L } &\textnormal{if } q>\frac{d}{2\nu}
		\end{cases}\,.
        $$
\end{corollary}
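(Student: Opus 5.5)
Write $\widehat\kappa_L$ in terms of $\beta_L$ and reduce to \Cref{prop: moments of betaL}. Since $\kappa_L = b+\lambda^L\beta_L$ and $C_0(L)=\int\kappa_L\,\di\mu = b+\lambda^L\int\beta_L\,\di\mu$, we have
\[
\widehat\kappa_L(u)=\frac{\kappa_L(u)-C_0(L)}{1-C_0(L)}=\frac{\lambda^L\big(\beta_L(u)-\int_{-1}^1\beta_L\,\di\mu\big)}{1-C_0(L)}=\frac{\lambda^L\,\widehat\beta_L(u)}{1-C_0(L)},
\]
where $\widehat\beta_L:=\beta_L-\int\beta_L\,\di\mu$. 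The first step is the denominator. By \eqref{dominazione-high} with $\nu$ against $(1-u^2)^{d/2-1}$, the integral $\int\beta_L\,\di\mu$ is bounded, since $q=1$ always satisfies $1<d/(2\nu)$ up to a boundary check. If $1\ge d/(2\nu)$, \Cref{prop: moments of betaL} with $q=1$ gives growth at most $L$ or $\lambda^{(d/(2\nu)-1)L}$, which is still $o(\lambda^{-L})$. Hence $\lambda^L\int\beta_L\,\di\mu\to0$ and $1-C_0(L)\to 1-b>0$. The same bound gives
\[
\widehat\kappa_L=\frac{\lambda^L}{1-b}\,\widehat\beta_L\,(1+o(1))
\]
uniformly, with the $o(1)$ a constant in $u$.

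\emph{Case $q<d/(2\nu)$.} Here $\beta_L\to\beta$ pointwise with the dominating function $(1-u^2)^{-\nu}\in L^q(\mu)\subset L^1(\mu)$. Dominated convergence then gives $\int\beta_L\,\di\mu\to\int\beta\,\di\mu$, so $\widehat\beta_L\to\widehat\beta$ pointwise. Moreover $|\widehat\beta_L|^q\lesssim (1-u^2)^{-q\nu}+1$, so again $\int\widehat\beta_L^q\,\di\mu\to\int\widehat\beta^q\,\di\mu$, which yields the displayed asymptotics.

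For the characterization of $\int\widehat\beta^q\,\di\mu=0$, one direction is easy. If $\kappa$ is odd then $b=0$, $\beta$ is odd and $\int\beta\,\di\mu=0$, so $\widehat\beta=\beta$ is odd and the odd moments vanish. The converse is the main obstacle. My plan is to use $\int\widehat\beta^q\,\di\mu = \lim (1-b)^q\lambda^{-Lq}\int\widehat\kappa_L^q\,\di\mu$ together with the Gegenbauer expansion of $\widehat\kappa_L$, whose coefficients $g_L(\ell)\ge0$ give a positive-definite kernel. For even $q$ the integral is positive because $\widehat\beta\not\equiv0$: $\beta\ge0$ is increasing and unbounded near $1$. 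For odd $q$ I would expand $\int\widehat\beta^q\,\di\mu$ as the sum in \Cref{lem:analytic conservation} with nonnegative terms $n_\ell g_\infty(\ell)\ge0$ (limits of nonnegative $g_L/v_L$) times $\widehat{\mathcal G_q}\ge0$. It then suffices to find one strictly positive term. If $\kappa$ is not odd, some even $\ell\ge2$ has $g_\infty(\ell)>0$, and then \eqref{positivity of powers of Gell} gives $\int G_\ell^q\,\di\mu>0$. Justifying that some even-$\ell$ coefficient survives, using $\beta$ not odd, and checking the hypotheses of \Cref{lem:analytic conservation} (analyticity and eventual positivity of derivatives of $\widehat\beta$, inherited from \ref{p3}) is where I expect the real work.

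\emph{Case $q\ge d/(2\nu)$.} From the identity above, $\int\widehat\kappa_L^q\,\di\mu\asymp\lambda^{Lq}\int\widehat\beta_L^q\,\di\mu$. Since $m_L:=\int\beta_L\,\di\mu$ is $O(1)$ if $1<d/(2\nu)$ and is in any case dominated by the singular part, the computation in the proof of \Cref{prop: moments of betaL} carries over. Indeed $\widehat\beta_L$ stays bounded on $K_r$, while on $(r,1)$ we have $\widehat\beta_L=\beta_L(1+o(1))$ in the region where $\beta_L$ is large, which is the region driving the divergent contribution. So $\int\widehat\beta_L^q\,\di\mu\asymp\int\beta_L^q\,\di\mu$, and \eqref{eq: moments of betaL non int case intro} gives $L$ or $\lambda^{(d/(2\nu)-q)L}$. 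Multiplying by $\lambda^{Lq}$ gives $\lambda^{Lq}L$ and $\lambda^{\frac{d}{2\nu}L}$ respectively.

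\emph{Absolute bound.} Bound $|\widehat\beta_L|^q\lesssim|\beta_L|^q+|m_L|^q$ and apply \eqref{eq: moments of |betaL| intro}; the constant term $|m_L|^q$ is dominated in each case.
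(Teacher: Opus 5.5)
Your route matches the paper's: you use the identity $\widehat\kappa_L=\lambda^L(\beta_L-m_L)/(1-C_0(L))$ with $m_L=\int_{-1}^1\beta_L\di\mu$, reduce to \Cref{prop: moments of betaL}, and prove positivity via \Cref{lem:analytic conservation} and \eqref{positivity of powers of Gell}.

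\textbf{The gap.} The step that fails as written is in the case $q\ge d/(2\nu)$, where you claim that $\widehat\beta_L=\beta_L-m_L$ stays bounded on $K_r$. This holds only when $1<d/(2\nu)$, and nothing forces that. The parameter $\nu$ is large when $\lambda$ is small compared with $1/\kappa'(1)$: for instance $\kappa(u)=\tfrac12+\tfrac12u^{10}$ gives $\kappa'(1)=5$, $\lambda\approx 10^{-2}$ and $\nu\approx 2.9$. The paper's proof treats $1=d/(2\nu)$ and $1>d/(2\nu)$ explicitly.

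In that regime $m_L\to+\infty$, like $L$ or like $\lambda^{(\frac{d}{2\nu}-1)L}$ respectively. So $\widehat\beta_L\approx -m_L$ on $K_r$ and on the part of $(r,1)$ where $\beta_L\lesssim m_L$.
\begin{itemize}
\item For odd $q$ this gives a negative contribution of order $m_L^q$.
\item For even $q$, a lower bound needs $|m_L|$ to be strictly smaller than $\|\beta_L\|_{L^q(\mu)}$. Jensen only gives $|m_L|\le\|\beta_L\|_{L^q(\mu)}$.
\end{itemize}

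\textbf{The missing ingredient} is the comparison $|m_L|^q=o\big(\int_{-1}^1|\beta_L|^q\di\mu\big)$. You assert it (``dominated by the singular part'') but never check it. It follows by comparing rates:
\begin{itemize}
\item for $d/(2\nu)<1$ the ratio is $\lambda^{q(\frac{d}{2\nu}-1)L}/\lambda^{(\frac{d}{2\nu}-q)L}=\lambda^{(q-1)\frac{d}{2\nu}L}\to0$;
\item for $d/(2\nu)=1$ it is $L^q\lambda^{(q-1)L}\to0$.
\end{itemize}
The paper packages this through the binomial expansion \eqref{cc}. The cross terms $\int\beta_L^k\di\mu\cdot m_L^{q-k}$, $1\le k\le q-1$, are controlled by Jensen and products of the moment bounds. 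The $k=0$ term is controlled by the comparison above. Hence the $k=q$ term dominates. Once you have this comparison, your pointwise argument also closes, by splitting according to whether $\beta_L\ge 2|m_L|$.

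\textbf{Two smaller points.}

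First, for the denominator you do not need moment bounds at $q=1$, which lie outside the range of \Cref{prop: moments of betaL}. You also do not need the incorrect remark that $1<d/(2\nu)$ essentially always holds. Simply note $\lambda^L m_L=C_0(L)-b\to0$ by dominated convergence, since $\kappa_L\to b$ pointwise and $|\kappa_L|\le1$.

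Second, in the odd-$q$ characterization, analyticity of $\widehat\beta$ and nonnegativity of its Taylor coefficients are not simply inherited from \ref{p3}. Each $\beta_L-m_L$ has nonnegative Taylor coefficients of order $\ge1$ at $0$ and radius of convergence $\ge1$. It is \Cref{prop:real_analytic_limit} (Vitali) that transfers this to the pointwise limit $\widehat\beta$; pointwise limits of real-analytic functions are not analytic in general.

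The even index $\ell\ge2$ with $g_\infty(\ell)>0$ then comes from two facts: $\int\widehat\beta\di\mu=0$, and $\widehat\beta$ is not odd. Indeed, if $\kappa$ is even then $\widehat\beta$ is even and nonzero. If $\kappa$ has no parity then $\widehat\beta$ is bounded near $-1$ but unbounded near $1$.
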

\begin{proof} Recalling the definition of $C_0(L)$, we have
\begin{equation}\label{C0-b} \lim_{L\to + \infty}C_0(L) =\lim_{L\to + \infty} \int_{-1}^1 \kappa_L(u) \di \mu(u) = b\,.  \end{equation}
Indeed $\kappa_L(u) \to b$ and the convergence is dominated by $1$.

\noindent
Moreover, using the definition of  $\beta_L$ (cf..,~\eqref{betaL-high}), we obtain
\begin{equation}\label{widekappa}
\begin{aligned}
\widehat \kappa_L(u) = & \frac{\kappa_L(u) - C_0(L)}{1-C_0(L)} = \frac{ \lambda^{L}}{1-C_0(L)} \Bigg[ \beta_L(u)- \int_{-1}^1 \beta_L(v) \di \mu(v) \Bigg]
\end{aligned}
\end{equation}
and hence
\begin{equation}\label{cc} \int_{-1}^1 \widehat\kappa_L(u)^q \di \mu(u) = \frac{\lambda^{Lq}}{(1-C_0(L))^q} \sum_{k=0}^q \binom{q} k (-1)^{q-k} \int_{-1}^1 \beta_L(u)^k \di \mu (u) \Bigg( \int_{-1}^1 \beta_L(u) \di \mu(u)\Bigg)^{q-k}\,.\end{equation}

\medskip
\noindent
If $q< d/(2\nu)$, then $\beta\in L^k(\mu)$ also for $k<q$ and so the claim follows using~\Cref{prop: moments of betaL} and~\eqref{C0-b}. {Let us now prove that $\int \widehat \beta^q\di\mu=0$ $\iff$ $\kappa$ and $q$ are both odd. Since $\widehat\beta\in L^2(\mu)$ and we have the $L^2(\mu)$-decomposition
\begin{equation}
    \label{decobetahat}
    \widehat\beta(u)=\sum_{\ell\in\N}\,g_{\infty}(\ell)\,n_{\ell,d}\,G_{\ell,d}(u)\,.
\end{equation}
Moreover, by \Cref{fixed-point:high} and \eqref{widekappa}, the sequence
$\beta_L - \int \beta_L \,\di\mu$ converges pointwise to $\widehat{\beta}$ on $(-1,1)$.
By \ref{p0}--\ref{p3}, each $\beta_L$ is real analytic on $(-1,1)$ and satisfies
$\beta_L^{(k)}(0) \ge 0$ for all $k \in \N$. Hence, by \Cref{prop:real_analytic_limit},
the limit $\widehat{\beta}$ is also real analytic on $(-1,1)$ and satisfies
$\widehat{\beta}^{(k)}(0) \ge 0$ for all $k \in \N$.
\noindent
As a consequence, applying \Cref{lem:analytic conservation} and recalling \eqref{puntuale}, we obtain
\begin{equation}
\label{stringona}
\int_{-1}^1 \widehat{\beta}(u)^q \,\mu(\di u)
= \sum_{\ell_1, \dots, \ell_q \in \N} \prod_{i=1}^q n_{\ell_i,d}\, g_\infty(\ell_i)\,
\widehat{\mathcal G_q}(\ell_1, \dots, \ell_q),
\end{equation}
where $\widehat{\mathcal G_q}$ is given in~\eqref{eq:Ghat} and  is non-negative by~\eqref{additionmultiple}. Retaining only the diagonal terms, we obtain the lower bound
\[
\int_{-1}^1 \widehat{\beta}(u)^q \,\mu(\di u)
\ge C \sum_{\ell \in \N} n_{\ell,d}^q\, g_\infty(\ell)^q
\int_{-1}^1 G_{\ell,d}(u)^q \,\mu(\di u).
\]

We are now in a position to prove the claim. If $q$ is even, then
$\widehat{\beta}(u)^q \ge 0$ for all $u \in (-1,1)$ and $\widehat{\beta} \not\equiv 0$\footnote{Recall that $\beta(u)\to +\infty$ when $u\to 1^-$},
so that $\int \widehat{\beta}^q \,\di\mu > 0$.

Assume now that $q \ge 3$ is odd. We prove that
\[
\int \widehat{\beta}^q \,\di\mu = 0 \quad \Longleftrightarrow \quad \kappa \text{ is odd}.
\]
If $\kappa$ is odd, then $\kappa_L$ is odd and $b=0$, so that $\beta_L$ is odd for all $L$,
and therefore $\widehat{\beta}$ is odd. It follows that $\widehat{\beta}^q$ is odd, and hence
its integral vanishes.

Conversely, assume that $\kappa$ is not odd. Then $\widehat{\beta}$ is not odd.\footnote{If
$\widehat{\beta}$ were odd, then $\widehat{\beta}(-u) = -\widehat{\beta}(u)$ would contradict
the boundary behaviour implied by \Cref{fixed-point:high}.}
Using the parity property $G_{\ell,d}(-u) = (-1)^\ell G_{\ell,d}(u)$ together with
\eqref{decobetahat}, it follows that there exists at least one even index $\ell$ such that
$g_\infty(\ell) > 0$. Therefore, by \eqref{positivity of powers of Gell} and the lower bound
\eqref{stringona}, we conclude that $\int \widehat{\beta}^q \,\di\mu > 0$.
 }

\medskip
\noindent
If $q\ge d/(2\nu)$, then by~\Cref{prop: moments of betaL}, for every $1\le k\le q-1$ (WLOG  $k\le q-k$), we  get
$$ \int_{-1}^1 |\beta_L(u)|^k \di \mu(u) \cdot  \int_{-1}^1 |\beta_L(u)|^{q-k}\di \mu(u) \lesssim
		\begin{cases}
			\lambda^{(\frac{d}{2}-q) L}\,\, &\textnormal{if }d/(2\nu)<k\le {q-k}\\
			L^2\,\, &\textnormal{if }d/(2\nu)=k=q-k\\
			\lambda^{(\frac{d}{2\nu}-q+k )L} L\,\, &\textnormal{if }d/(2\nu)=k<q-k\\
			L\,\, &\textnormal{if }k<d/(2\nu)=q-k\\
			\lambda^{(d/(2\nu) -q+k)L}\,\, &\textnormal{if }k<d/(2\nu)<q-k\\
			1\,\, &\textnormal{if } k\le q-k <d/(2\nu)
            \end{cases}
$$
Thus, using Jensen's inequality, we have
$$
\Bigg| \int_{-1}^1 \beta_L(u)^k \di \mu(u) \Big( \int_{-1}^1 \beta_L(u)\di \mu(u)\Big)^{q-k}\Bigg| = o\Bigg( \int_{-1}^1 |\beta_L(u)|^q \di \mu(u)\Bigg)
$$
and
	\[
	\left|\int_{-1}^1 \beta_L(u)\di \mu(u)\right|^{q}\lesssim\begin{cases}
1\,\, &\textnormal{if } 1<d/(2\nu) \\
		L^q\,\, &\textnormal{if } 1=d/(2\nu) \\
		\lambda^{q(\frac{d}{2\nu}-1)L}\,\, &\textnormal{if }1>d/(2\nu)
	\end{cases}
	\qquad=o\left(\int_{-1}^1 |\beta_L(u)|^q\di \mu(u)\right).
	\]
Then, the dominating term into~\eqref{cc} comes from $k=q$ and hence the claim.  The proof that one has the same rates taking $|\widehat \kappa_L|$ instead of $\kappa_L$ follows by standard inequalities and in particular \eqref{eq: moments of |betaL| intro}.
\end{proof}
\begin{remark}

 The proof of~\eqref{puntuale} when $\beta\in L^2(\mu)$, i.e. when $2<d/2\nu$, mirrors the proof of the convergence of $\frac{g_L(\ell,m)}{\kappa'(1)^L}$ in the low-disorder case, but using \Cref{fixed-point:high} and dominated convergence.
\end{remark}

\subsection{Proof of~\texorpdfstring{\Cref{cor:boun-hatK-iper}}{the Corollary} and \eqref{puntuale} in the sparse regime}\label{Prop:sparse}
The proof of these cases is more similar to the proofs in the high disorder, but with different velocity and a different control for the integrals. First, we observe that~\Cref{ass:cri-pot} is satisfied for  activation functions commonly considered in the literature.

\begin{remark}\label{rem::relu}
Let $\kappa$ be the normalized kernel associated with the ReLU activation function
$\sigma(x)=0\vee x$. It is well known (see~\cite{cho2011analysisextensionarccosinekernels} for instance) that 
$$
\kappa(u)=\frac{(\pi-\arccos u)\,u+\sqrt{1-u^{2}}}{\pi}.
$$
An expansion as $u\to1^{-}$ yields
$$
1-\kappa(u)
= (1-u) - \frac{2\sqrt 2}{3\pi}(1-u)^{3/2}
  - \frac{1}{15\sqrt 2\,\pi}(1-u)^{5/2}
  + o\big((1-u)^{5/2}\big).
$$
Thus, the ReLU kernel satisfies~\Cref{ass:cri-pot} with $\gamma_2 > 2(\gamma_1-1)$
\end{remark}
\begin{remark}\label{rem::sparse-} Let $\kappa\in C^4([-1,1])$ be a normalized kernel associated with some activation functions. Then $\kappa$ satisfies~\Cref{ass:cri-pot} with $\gamma_2 = 2(\gamma_1-1)$. Indeed, by Taylor expansion, we have
$$ 1- \kappa(x) = (1-x) - c_1(1-x)^2  + c_2 (1-x)^3 - c_3 (1-x)^4 + o((1-x)^4)$$
where $c_i =\frac{ \kappa^{(i)}(1)}{i!}$.
\end{remark}
The next lemma is analogous to~\Cref{fixed-point:high} and the proof is deferred to~\Cref{fixed-point-sparse-tecn}
\begin{lemma}\label{fixed-point-sparse}
Let $\kappa$ satisfy~\Cref{ass:cri-pot} and set $\rho := (\gamma_1-1)^{-1}$.
There exist constants $\beta_0>0$ and $\beta_1$, depending only on $\kappa$,
such that the renormalized iterates
\begin{equation}\label{betaL}
\beta_L(x)
:=
L^{\rho+1}
\Big(
\kappa_L(x)
-1
+\beta_0 L^{-\rho}
-\beta_1 L^{-(\rho+1)} \log L
\Big),
\qquad x\in[-1,1),
\end{equation}
converge pointwise as $L\to\infty$ to a limit function  $\beta:[-1,1)\to\mathbb R$.

Moreover, setting $\nu = 1- \gamma_1$, there exist $L_1>0$ and constants $0<\delta<1$ and $C>0$ such that for all
$L\ge L_1$ one has
\begin{align}
\label{beta-upper}
|\beta_L(x)|
&\lesssim
(1-x^2)^{-\nu},
&& x\in (-1,1),\\[6pt]
\label{beta-lower}
\beta_L(x)
&\gtrsim
(1-x)^{-\nu} [ 1 - C(1-x)^{-\nu}L^{-1}]
&& x\in(1-\delta,1),
\end{align}
\end{lemma}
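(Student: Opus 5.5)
We will work with the error variable $\varepsilon_L(x):=1-\kappa_L(x)$. Because $\kappa'(1)=1$, \Cref{ass:cri-pot} gives the one-step recursion
\[
\varepsilon_{L+1}=\varepsilon_L-c_1\varepsilon_L^{\gamma_1}-c_2\varepsilon_L^{\gamma_2}\,(-c_3\varepsilon_L^{\gamma_3})+o(\cdot).
\]
We first argue that $c_1>0$. Otherwise $\kappa(u)\ge u$ near $1$, and by \ref{p1}--\ref{p3} and $\kappa'\le 1$ this would force $\kappa\equiv u$. It then follows that $\kappa_L(x)\to1$ for every $x\in[-1,1)$: use \ref{p1} and monotonicity on $[0,1)$, and after one or two steps $\kappa_L(x)$ lies in $(0,1)$ unless $\kappa$ is odd, which was excluded in the sparse case.

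The standard device for non-hyperbolic (parabolic) fixed points is the Fatou-type conjugacy $w=\varepsilon^{-\nu}$, with $\nu=\gamma_1-1$. Then
\[
w_{L+1}=w_L+\nu c_1+a\,w_L^{-1}+O(w_L^{-1-\eta}),
\]
where the coefficient $a$ gathers the $c_1^2$ and $c_2$ terms. Those terms only produce a $w^{-1}$ correction precisely when $\gamma_2=2\gamma_1-1$, which is case (B). In case (A) the next correction is of order $w^{-1-\eta}$ for some $\eta>0$, because $\gamma_2>2\gamma_1-1$ and the $c_1^2$ part still contributes to $a$. Summing the recursion gives
\[
w_L(x)=\nu c_1 L+\tfrac{a}{\nu c_1}\log L+\Phi(x)+o(1),
\]
where $\Phi(x):=\lim_L\bigl(w_L(x)-\nu c_1L-\tfrac{a}{\nu c_1}\log L\bigr)$. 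This limit exists because the increments are summable, being $O(L^{-1-\eta})$ once $w_L\asymp L$. Inverting with $\varepsilon_L=w_L^{-\rho}$ and expanding $(\nu c_1L)^{-\rho}\bigl(1+\tfrac{a\log L+\nu c_1\Phi}{(\nu c_1)^2 L}\bigr)^{-\rho}$ produces the constants $\beta_0=(\nu c_1)^{-\rho}$ and $\beta_1$, which is proportional to $a$ and independent of $x$. It also identifies the limit $\beta(x)=\rho(\nu c_1)^{-\rho-1}\Phi(x)$, where $O(L^{-\rho-2}\log^2L)$ errors vanish after multiplication by $L^{\rho+1}$. This proves the pointwise convergence.

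For the bounds we need quantitative control of $\Phi$ and of the finite-$L$ version uniformly in $x$. We will write $w_L(x)=w_0(x)+\nu c_1L+\sum_{j<L}r_j(x)$, where $w_0(x)=(1-\kappa(x))^{-\nu}$ after one preliminary step, and $|r_j|\lesssim (w_0+j)^{-1}$. Summing gives a sum of order $\log\bigl((w_0+L)/w_0\bigr)$, and this logarithm cancels with $\tfrac{a}{\nu c_1}\log L$ up to $O(\log w_0)$. Hence
\[
\bigl|w_L-\nu c_1L-\tfrac{a}{\nu c_1}\log L-w_0\bigr|\lesssim 1+\log w_0 .
\]
Using $w_0(x)\asymp(1-x)^{-\nu}$ near $1$, this gives $|\beta_L(x)|\lesssim w_0(x)\lesssim (1-x)^{-\nu}$ near $1$. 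Away from $1$ the same argument gives boundedness. Near $-1$ the analogous bound $(1+x)^{-\nu}$ should come from applying the same estimate to $|\kappa(x)|\le\kappa(|x|)$ via \ref{p1}, using that $\kappa_1(x)$ is close to $\kappa(1)$ or to $\kappa(-1)$; this yields \eqref{beta-upper}.

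For \eqref{beta-lower} we keep the leading term $w_0\ge c(1-x)^{-\nu}$ in the expansion of $\varepsilon_L$. The linearization of $(\nu c_1L+w_0+\dots)^{-\rho}$ is valid only while $w_0\ll L$, and the next-order term is of relative size $w_0/L$, giving exactly $\beta_L\gtrsim(1-x)^{-\nu}\bigl[1-C(1-x)^{-\nu}L^{-1}\bigr]$.

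The main obstacle is the uniformity in $x$ near $1$, where $w_0(x)$ is unbounded. The remainder estimates $r_j=O(w_j^{-1-\eta})$ must hold uniformly, and one must verify that the $o(\varepsilon^{\gamma})$ terms of the assumption do not spoil the summation. I would handle this by first choosing $\delta$ so that the expansion holds with explicit constants on $(1-\delta,1)$. Then monotonicity (\ref{p3}) keeps all iterates in that window, so the comparison argument for the recursion is uniform.
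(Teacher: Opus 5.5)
Your proposal is essentially the paper's proof. It uses the same conjugacy $w=\varepsilon^{1-\gamma_1}$ with recursion $w_{\ell+1}=w_\ell+A_0+A_0A_1w_\ell^{-1}+R_\ell$ (your $\nu c_1=A_0$, $a=A_0A_1$), telescopes to $S_L=w_L-A_0L-A_1\log L\to S$ (your $\Phi$), and inverts via a Taylor expansion with Lagrange remainder, giving $\beta_0=A_0^{-\rho}$, $\beta_1=\beta_0\rho A_1/A_0$ and $\beta=\rho A_0^{-\rho-1}S$.

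Your two-sided estimate $|S_L-w_0|\lesssim 1+\log w_0$ is a slightly sharper substitute for the paper's one-sided bounds on $S_L$, and it avoids the paper's case split on the sign of $A_1$. Two small points need fixing:
\begin{itemize}
\item Both that estimate and your summability claim need the bootstrap $w_j=w_0+A_0j+O(\log(2+j))$. The bounds $w_L\asymp L$ or $|r_j|\lesssim(w_0+j)^{-1}$ alone give no cancellation against $A_1\log L$.
\item The inequality in your $c_1>0$ step is reversed. The correct argument: $c_1<0$ would give $\kappa(u)<u$ near $1$, contradicting $\kappa(u)\ge u$, which follows from $\kappa'\le\kappa'(1)=1$.
\end{itemize}
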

As in the high disorder, with this lemma we can prove the following proposition.
 \begin{proposition}\label{mom-sparso}With the same assumptions and notations of \Cref{fixed-point-sparse}, for fixed $q\in \mathbb N$ with $q\geq 2$, the following asymptotics hold.

 If $q< d/(2\nu)$, as $L\to \infty$ we have
 	\begin{equation}
\int_{-1}^1 \beta_L^q(u)\di \mu(u)=\,\int_{-1}^1  \beta^q(u)\di \mu(u)\,(1+o(1))\,\,
 	\end{equation}

If $q\ge d/(2\nu)$,
 	\begin{align}\label{eq: moments of betaL non int case intro sparso}
\int_{-1}^1 \beta_L^q(u)\di \mu(u)\,\asymp \,
 		\begin{cases}
 			\log L\,\, &\textnormal{if } q=d/(2\nu) \\
 			L^{q -d/(2\nu) }\,\, &\textnormal{if }q>d/(2\nu)
 		\end{cases}
 		\qquad\,.
 	\end{align}

Moreover, we have
\begin{equation}\label{eq: moments of |betaL| intro  sparso} \int_{-1}^1|\beta_L^q(u)|^q \di \mu(u) \,\asymp \,
 		\begin{cases}
        1 \,\, &\textnormal{if } q<d/(2\nu) \\
 			\log L\,\, &\textnormal{if } q=d/(2\nu) \\
 			L^{q -d/(2\nu) }\,\, &\textnormal{if }q>d/(2\nu)
 		\end{cases}
 		\qquad\,.
        \end{equation}
 \end{proposition}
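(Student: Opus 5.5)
The plan is to follow the proof of \Cref{prop: moments of betaL} step by step, with \Cref{fixed-point-sparse} in place of \Cref{fixed-point:high}. The only change is that the geometric scales $\lambda^{i/\nu}$ become polynomial scales in $L$. Here $\nu=\gamma_1-1>0$ (the sign in the statement of \Cref{fixed-point-sparse} is a typo), and $\di\mu(u)\asymp(1-u)^{d/2-1}\di u$ near $u=1$, with the symmetric behaviour near $u=-1$.

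\textbf{Case $q<d/(2\nu)$.} The claim follows from dominated convergence. By \Cref{fixed-point-sparse}, $\beta_L\to\beta$ pointwise on $[-1,1)$. By \eqref{beta-upper},
\[
|\beta_L(u)|^q(1-u^2)^{d/2-1}\lesssim(1-u^2)^{-q\nu+d/2-1},
\]
and the right-hand side is integrable exactly when $q<d/(2\nu)$. This gives the first asymptotic and also the first line of \eqref{eq: moments of |betaL| intro  sparso}; the latter line is understood as $\int|\beta_L|^q\di\mu$.

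\textbf{Case $q\ge d/(2\nu)$.} Fix $r=1-\delta'$, where $\delta'\le\delta$ is small. On $[-1,r]$ the bound \eqref{beta-upper} alone does not give uniform boundedness near $u=-1$. I will instead use the fact that $\kappa_L(u)\to1$ uniformly away from $u=1$; this holds because $|\kappa_L(u)|\le\kappa_L(|u|)$ by \ref{p1}. So I need to check that $\beta_L$ stays bounded on $[-1,r]$. If that fails, \eqref{beta-upper} still shows the contribution of $[-1,r]$ is at most what the region near $+1$ contributes, and that is enough for the upper bounds. The main contribution comes from $(r,1)$, which I split at $1-\epsilon_L$.

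\textbf{Choice of the cut-off $\epsilon_L$.} I choose $\epsilon_L=(2C/L)^{1/\nu}$. With this choice, \eqref{beta-lower} gives $\beta_L(u)\gtrsim\tfrac12(1-u)^{-\nu}>0$ for $u\in(1-\delta',1-\epsilon_L)$, since $C(1-u)^{-\nu}L^{-1}\le 1/2$ there.

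\textbf{Part I, the integral over $(r,1-\epsilon_L)$.} The upper bound (from \eqref{beta-upper}) and the lower bound (from \eqref{beta-lower}) are both of order
\[
\int_{\epsilon_L}^{\delta'}x^{d/2-1-q\nu}\,\di x\asymp
\begin{cases}
\log(\epsilon_L^{-1})\asymp\log L, & q=d/(2\nu),\\
\epsilon_L^{d/2-q\nu}\asymp L^{q-d/(2\nu)}, & q>d/(2\nu).
\end{cases}
\]

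\textbf{Part II, the integral over $(1-\epsilon_L,1)$.} A direct bound is needed here, because $\kappa_L\le1$ only gives $\beta_L\lesssim L^{\rho+1}\cdot L^{-\rho}=L$. Since $\kappa_L(u)\le 1$, we have
\[
\beta_L(u)\le L^{\rho+1}\bigl(\beta_0L^{-\rho}-\beta_1L^{-\rho-1}\log L\bigr)\lesssim L .
\]
Hence
\[
\mathrm{II}\lesssim L^q\epsilon_L^{d/2}\asymp L^{q-d/(2\nu)} .
\]
This is of the same order as I when $q>d/(2\nu)$, and $O(1)=o(\log L)$ when $q=d/(2\nu)$. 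For the lower bound I will use only I, which requires $\beta_L\ge0$ on II as well. I expect this to follow from monotonicity: $\kappa_L$ is increasing on $(0,1]$ by \ref{p3}, so $\beta_L(u)\ge\beta_L(1-\epsilon_L)>0$ for $u\ge1-\epsilon_L$.

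\textbf{Conclusion.} Putting I and II together gives \eqref{eq: moments of betaL non int case intro sparso}. The same bounds applied to $|\beta_L|^q$ give \eqref{eq: moments of |betaL| intro  sparso}.

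\textbf{Main obstacle.} The delicate point is the region near $u=-1$ when $q\ge d/(2\nu)$. There one needs to show that it does not produce a competing or negative divergent contribution, i.e. that $\beta_L$ is uniformly bounded on $[-1,r]$. I would try to derive this from the representation of $\kappa_L$ as $\kappa_{L-n}$ applied to $\kappa_n(u)$. For $u\le r$, the value $\kappa_n(u)$ enters $[0,1-\delta]$ after finitely many steps, so $\beta_L(u)$ is comparable to $\beta_{L-n}$ at a point of $[0,r']$ with $r'<1$. This reduces to the bounded region handled by \eqref{beta-upper}.
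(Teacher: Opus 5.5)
Your proposal is correct and follows essentially the same route as the paper. It uses dominated convergence via \eqref{beta-upper} when $q<d/(2\nu)$. For $q\ge d/(2\nu)$ it splits at $1-\epsilon_L$ with $\epsilon_L\asymp L^{-1/\nu}$, chosen so that the bracket in \eqref{beta-lower} stays above $1/2$, and uses the crude bound $\beta_L\lesssim L$ on $(1-\epsilon_L,1)$. Your monotonicity argument giving $\beta_L\ge0$ on that last piece is a useful detail the paper leaves implicit.

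The one point to adjust is the region near $u=-1$. Uniform boundedness of $\beta_L$ on $[-1,r]$, and your reduction through $\kappa_n$, hold only for non-even $\kappa$: for even $\kappa$ one has $\beta_L(-1)=\beta_0L-\beta_1\log L$. In that case the paper uses the evenness of $\beta_L$ and $\mu$, so the contribution near $-1$ exactly duplicates the one near $+1$ (the factor $\eta=2$). Your fallback appeal to \eqref{beta-upper} does not do this job, since its majorant $(1+u)^{-q\nu}$ is not $\mu$-integrable near $-1$ for $q\ge d/(2\nu)$.
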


\begin{proof}The proof closely follows that of the analogous proposition in the high-disorder case, with a few minor modifications.

\noindent
{\bfseries Case $\boldsymbol{q<\frac{d}{2\nu}}$.} The claim follows using the same argument as in the proof of the high-disorder.

\smallskip
\noindent
{\bfseries Case $\boldsymbol{q\geq\frac{d}{2\nu}}$.}
Let $C$ and $\delta$ are as in~\Cref{fixed-point-sparse}.
Following the proof in the high-disorder case, we decompose the integral as
\[
\int_{-1}^1 \beta_L^q(u)\,\di \mu(u)
=
\int_{-1}^{1-\delta}\beta_L^q(u)\,\di \mu(u)
+
\int_{1-\delta}^1 \beta_L^q(u)\,\di \mu(u)\,.
\]
Hence
\[
\int_{-1}^1 \beta_L^q(u)\,\di \mu(u)
\sim \eta
\int_{1-\delta}^1 \beta_L^q(u)\,\di \mu(u)\,,
\]
provided that the latter diverges, where $\eta=2$ if $\kappa$ is even and otherwise $\eta=1$\footnote{Recall that in the sparse cases, $\kappa$ can not be odd.}.
Arguing as in the high-disorder case, we further decompose
\begin{equation}\label{I+II-sparse}
\int_{1-\delta}^1 \beta_L^q(u)\,\di \mu(u)
=
\int_{1-\delta}^{1-\epsilon_L}\beta_L^q(u)\,\di \mu(u)
+
\int_{1-\epsilon_L}^1\beta_L^q(u)\,\di \mu(u)
:= \mathrm{I} + \mathrm{II}\,,
\end{equation}
where $\epsilon_L:=(\eps L)^{-1/\nu}$ for $\eps$ small enough that will be chosen later.

\smallskip
\noindent
{\itshape Upper bound for $\mathrm{I}$. }  the same computation used in the high-disorder case, we obtain
 	\begin{equation*}
 	\mathrm{I}\, \lesssim \,
 		\begin{cases}
 			\log(\epsilon_L^{-1})\asymp \log L\,\, &\textnormal{if } q=d/(2\nu) \,,\vspace{1mm}\\
 			\epsilon_L^{d/2-q\nu}\asymp L^{q -d/(2\nu)}\,\, &\textnormal{if }q>d/(2\nu)\,.
 		\end{cases}
        \end{equation*}
{\itshape Lower bound for $\mathrm{I}$.} Choosing $\eps \leq 1/ (2C)$ we have
$$1- C(1-x)^{-\nu} L^{-1} \geq 1-C \eps\geq \frac 1 2,\qquad x\in[ r, 1-\varepsilon_L]$$ and hence
\begin{equation*}
\begin{aligned}
\mathrm{I}\,\gtrsim\,&\int_{1-\delta}^{1-\varepsilon_L} (1-x)^{-\nu q} \Bigg[ 1- \frac{C(1-x)^{-\nu}}{L}\Bigg]^q \di \mu(u)  \gtrsim \int_{1-\delta}^{1-\varepsilon_L}  (1-x)^{-\nu q + d/2-1}\di u \,.
        \end{aligned}
        \end{equation*}
This allows us to derive the desired lower bound.

\smallskip
\noindent
{\itshape Estimate for $\mathrm{II}$} By definition of $\beta_L$ (cf..~\eqref{betaL}) and since $\kappa_L(u)< 1$ we have $\beta_L(u) \lesssim L $ and hence
 	\begin{equation*}
 		\mathrm{II}\lesssim L^q \int_{1-\epsilon_L}^1  (1-u)^{d/2-1}\,du\lesssim L^{q-d/(2\nu)} \lesssim \,\mathrm{I}\,.
 	\end{equation*}
 	\medskip
 	\noindent
 	\noindent
 	Combining~\eqref{I+II-sparse} with the previous bounds yields~\eqref{eq: moments of betaL non int case intro sparso}.

    \medskip
    \noindent {\bfseries Proof of~\eqref{eq: moments of |betaL| intro  sparso}.} Same as in the high-disorder cases.
 \end{proof}
\begin{remark}
 As in high-disorder cases, $\beta\in L^q(\mu)$ if and only if $q<d/(2\nu)$. Using the same argument as before, we have $\beta \in L^q(\mu)$ if $q< d/(2\nu)$. On the other hand, for every $x\in (1-\delta,1)$, from~\eqref{beta-lower}, we have
    $$ \beta(x) = \lim_{L\to + \infty} \beta_L(x)\gtrsim \lim_{L\to +\infty}\Big((1-x)^{-\nu} [ 1 - C(1-x)^{-\nu}L^{-1}] \Big) = (1-x)^{-\nu} \,.$$
Hence,
$$ \int_{-1}^1 |\beta(u)|^q\di \mu(u) \geq \int_{1-\delta}^1 |\beta(u)|^q\di \mu(u) \gtrsim \int_{-1}^1 (1-x)^{d/2-1-\nu} \di x $$
 and the claim follows since the integral on the right-hand side diverges.
\end{remark}
The proof of the following corollary, which establishes~\Cref{cor:boun-hatK-iper} in the sparse case,
partially follows the proof of the analogous result in the high-disorder case.
However, the argument is more technical, since $1-C_0(L)$ converges to $0$
rather than to a constant.

\begin{corollary}
\label{cor:boun-hatK-sparse}Let $\widehat \kappa_L$ be as in \eqref{defkappahatL} and assume \Cref{ass:cri-pot}. Then for $q\ge2$ we have the following.

\noindent
If $q<\frac{d}{2\nu}$, as $L\to\infty$ we have $\int_{-1}^1\widehat \beta ^q \di\mu >0$ always and
	\begin{equation*}
		\int_{-1}^1 \widehat \kappa_L(u)^q\,\di \mu(u)
		=\frac{L^{-q}}{\beta_0^q} \int_{-1}^1  \widehat \beta(u)^q\,\di \mu(u)\,(1+o(1))\, .
	\end{equation*}

    \noindent
If $q\ge \frac{d}{2\nu}$ we have

$$ \int_{-1}^1 \widehat \kappa_L(u)^q \di \mu (u) \asymp \frac{1}{(L\beta_0)^q} \int_{-1}^1 \beta_L(u)^q \di \mu(u)\asymp
\begin{cases}
			L^{-q} \log L &\textnormal{if } q=\frac{d}{2\nu}\,, \\
			L^{-\frac{d}{2\nu} } &\textnormal{if } q>\frac{d}{2\nu}\,.
		\end{cases}
        $$
{Moreover,}
$$ \int_{-1}^1  |\widehat\kappa_L(u)|^q \di \mu (u)\lesssim
\begin{cases}
L^{-q}  &\textnormal{if } q<\frac{d}{2\nu}\,, \\
			L^{-q} \log L &\textnormal{if } q=\frac{d}{2\nu}\,, \\
			L^{-\frac{d}{2\nu} } &\textnormal{if } q>\frac{d}{2\nu}\,.
		\end{cases}
        $$
\end{corollary}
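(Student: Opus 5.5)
The plan mirrors the proof of \Cref{cor:boun-hatK-high}, but must track the vanishing denominator $1-C_0(L)$ explicitly. First I would express $\widehat\kappa_L$ through $\beta_L$. From \eqref{betaL},
\[
\kappa_L(u)=1-\beta_0L^{-\rho}+\beta_1L^{-(\rho+1)}\log L+L^{-(\rho+1)}\beta_L(u).
\]
Integrating against $\mu$ gives
\[
C_0(L)=1-\beta_0L^{-\rho}+\beta_1L^{-(\rho+1)}\log L+L^{-(\rho+1)}\int\beta_L\,\di\mu .
\]
Subtracting, the constant and logarithmic terms cancel, and
\[
\kappa_L(u)-C_0(L)=L^{-(\rho+1)}\Big(\beta_L(u)-\int\beta_L\,\di\mu\Big).
\]
For the denominator, $1-C_0(L)=\beta_0L^{-\rho}(1+o(1))$. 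This holds because $\int|\beta_L|\,\di\mu\lesssim\int(1-u^2)^{-\nu}\di\mu$ is bounded when $1<d/(2\nu)$. Otherwise \eqref{eq: moments of |betaL| intro  sparso} gives $\int|\beta_L|\di\mu\lesssim \log L$ or $L^{1-d/(2\nu)}$, both $o(L)$. Either way $L^{-(\rho+1)}\int\beta_L\di\mu$ and $L^{-(\rho+1)}\log L$ are $o(L^{-\rho})$. Hence
\[
\widehat\kappa_L(u)=\frac{1+o(1)}{\beta_0 L}\Big(\beta_L(u)-\int\beta_L\,\di\mu\Big),
\]
which is the analogue of \eqref{widekappa} with $v_L=L^{-1}$.

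Second, I would expand the $q$th power binomially as in \eqref{cc}. For $q<d/(2\nu)$, every $k\le q$ has $\beta\in L^k(\mu)$. Dominated convergence via \eqref{beta-upper} then gives $\int\beta_L^k\di\mu\to\int\beta^k\di\mu$, so the binomial sum converges to $\int\widehat\beta^q\di\mu$, which yields the first asymptotic. For $q\ge d/(2\nu)$, I would copy the case analysis of \Cref{cor:boun-hatK-high}, replacing $\lambda^{-L}$ by $L$ and $L$ by $\log L$. Using \eqref{eq: moments of |betaL| intro  sparso}, every cross term $\int|\beta_L|^k\di\mu\,(\int|\beta_L|\di\mu)^{q-k}$ with $k<q$ is $o\big(\int|\beta_L|^q\di\mu\big)$. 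Then \Cref{mom-sparso} gives the rates $L^{-q}\log L$ and $L^{-q}L^{q-d/(2\nu)}=L^{-d/(2\nu)}$. The bound for $|\widehat\kappa_L|^q$ follows from $|a-c|^q\lesssim|a|^q+|c|^q$, Jensen's inequality, and \eqref{eq: moments of |betaL| intro  sparso}.

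The main obstacle is proving $\int\widehat\beta^q\di\mu>0$ for all $q<d/(2\nu)$. For even $q$ this is immediate, since $\widehat\beta\not\equiv0$: $\beta(x)\gtrsim(1-x)^{-\nu}$ near $1$ by \eqref{beta-lower}. For odd $q$ I would repeat the real-analytic argument used in the high-disorder case. Each $\beta_L-\int\beta_L\,\di\mu$ is analytic on $(-1,1)$ with nonnegative Taylor coefficients of order $\ge1$ at $0$, by \ref{p3}. \Cref{prop:real_analytic_limit} transfers analyticity and coefficient positivity to $\widehat\beta$. \Cref{lem:analytic conservation} then turns $\int\widehat\beta^q\di\mu$ into the nonnegative Gegenbauer sum \eqref{stringona}. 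It remains to show that some even $\ell\ge2$ has $g_\infty(\ell)>0$. Since $\kappa$ is never odd in the sparse regime, I would argue that $\widehat\beta$ is not odd: oddness contradicts the blow-up at $1$ and boundedness at $-1$, as $\beta_L$ is bounded near $-1$ because $\kappa_L(-1)$ stays away from the singular behaviour. Positivity then follows from \eqref{positivity of powers of Gell}. The delicate point is justifying the hypotheses of \Cref{prop:real_analytic_limit} for the sparse normalization, which carries $L$-dependent additive constants. These only affect the zeroth coefficient, so I expect this to go through.
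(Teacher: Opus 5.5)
Your proposal follows the paper's proof essentially step for step: the same representation $\widehat\kappa_L=\frac{L^{-(\rho+1)}}{1-C_0(L)}(\beta_L-\int\beta_L\,\di\mu)$, the binomial expansion controlled by \Cref{mom-sparso}, and positivity of $\int\widehat\beta^q\,\di\mu$ via \Cref{prop:real_analytic_limit}, \Cref{lem:analytic conservation} and \eqref{positivity of powers of Gell}. Your explicit check that $\int\beta_L\,\di\mu=o(L)$, which gives $1-C_0(L)\sim\beta_0L^{-\rho}$ also when $q\ge d/(2\nu)$, fills in a step the paper leaves implicit; one small correction is that your reason for $\widehat\beta$ not being odd (boundedness near $-1$) fails when $\kappa$ is even, since $\beta_L$ then blows up at both endpoints, but in that case $\widehat\beta$ is even and not identically zero, hence trivially not odd, so the argument still goes through.
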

\begin{proof} Using the definition of $C_0(L)$ (cf.~\eqref{C-orto}) and of $\beta_L$ (cf.~\eqref{betaL}), since $\mu$ is a probability density, we obtain
\begin{equation}\label{widekappa-sparse}
\begin{aligned}
\widehat \kappa_L(v) = & \frac{\kappa_L(v) - C_0(L)}{1-C_0(L)}=  f_L \Big[ \beta_L(v) - \int_{-1}^1 \beta_L(u) \di \mu(u) \Big]
\end{aligned}
\end{equation}
with
    $$ f_L = \frac{L^{-(\rho+1)}}{1-C_0(L)} = \frac{L^{-(\rho+1)}}{\beta_0 L^{-\rho} - \beta_1 L^{-(\rho+1)}\log L - L^{-(\rho+1)}\int_{-1}^1 \beta_L(u) \di \mu (u)}$$
and in particular,
\begin{equation}\label{cc-sparse} \int_{-1}^1 \widehat\kappa_L(u)^q \di \mu(u) = f_L^q \sum_{k=0}^q \binom{q} k (-1)^{q-k} \int_{-1}^1 \beta_L(u)^k \di \mu (u) \Bigg( \int_{-1}^1 \beta_L(u) \di \mu(u)\Bigg)^{q-k}\,.\end{equation}

\medskip
\noindent
If $\beta\in L^q(\mu)$, then $\beta\in L^k(\mu)$ also for $k<q$ and so, $f_L\sim (\beta_0 L)^{-1}$. The claim follows using~\Cref{mom-sparso}. {To prove that $\int_{-1}^1\widehat\beta ^q(u)\di\mu(u)$ is always strictly positive, one can mirror exactly the argument in the proof of \Cref{cor:boun-hatK-high} to show that $\int_{-1}^1\widehat\beta ^q(u)\di\mu(u)$ is $0$ only if $\kappa$ is odd, just using \Cref{fixed-point-sparse} and \eqref{widekappa-sparse} instead of \Cref{fixed-point:high} and \eqref{widekappa}. We conclude observing that $\kappa$ can never be odd in the sparse case (otherwise $0$ would be a fixed point). }

\medskip
\noindent
If $\beta \not \in L^q(\mu)$, i.e., $q\geq d/(2\nu)$, one can use  the same argument as in the proof of~\Cref{cor:boun-hatK-high} to prove that, for $k=1, \dots, q-1$, we have
$$
\Bigg| \int_{-1}^1 \beta_L(u)^k \di \mu(u) \Big( \int_{-1}^1 \beta_L(u)\di \mu(u)\Big)^{q-k}\Bigg| = o\Bigg( \int_{-1}^1 |\beta_L(u)|^q \di \mu(u)\Bigg)
$$
and, for $k=0$,
	\[
	\left|\int_{-1}^1 \beta_L(u)\di \mu(u)\right|^{q}=o\left(\int_{-1}^1 \beta_L^q(u)\di \mu(u)\right).
	\]
Then, the dominating term in~\eqref{cc} comes from $k=q$ and hence the claim. The proof of the upper bound when we consider $\int |\widehat \kappa_L|$ follows by standard inequalities and ~\Cref{mom-sparso}.
\end{proof}

\begin{remark}To conclude the proof, we need to prove~\eqref{puntuale} when $\beta\in L^2(\mu)$. Recall that
$$ g_L(\ell) = \frac{C_\ell(\ell)}{1-C_0(\ell)}\mathbf{1}_{\ell\ge 1} $$
Now, using the same computation as in~\Cref{cor:boun-hatK-sparse}, we have
\begin{equation}\label{pun0}1-C_0(L) \sim \beta_0 L^{-\rho}\,.\end{equation}
Moreover, since $\int G_{\ell,d}(u) \di \mu(u) =0 $ for every $\ell>0$, we have
\begin{align*}C_{L}(\ell) =&\int_{-1}^1 \kappa_L(u)  G_{\ell,d} \mu(u) = \int_{-1}^1 \Big( \kappa_L(u)-1 +\beta_0 L^{-\rho}- \beta_1 L^{-(\rho+1)}\log L)  G_{\ell,d}(u) \mu(u)\\
= & L^{-(\rho+1)} \int_{-1}^1 \beta_L(u) G_{\ell,d}(u) \di \mu(u) \,.
\end{align*}
Hence, by dominated convergence\footnote{We assume $\beta\in L^2(\mu)$, hence $(1-x)^{-\nu}\in L^2$. Then the domination comes from~\eqref{beta-upper}}, we have
\begin{equation}\label{pun1} L^{\rho+1} C_L(\ell) \sim \mathbf{1}_{\ell\ge 1}\int_{-1}^1 \beta(u) G_{\ell,d}(u)\di \mu(u)=\int_{-1}^1 \widehat \beta(u) G_{\ell,d}(u)\di \mu(u)\,,
\end{equation}
where the last equality follows by definition of $\widehat\beta$ (since $\beta\in L^2(\mu)\subseteq L^1(\mu)$). So,~\eqref{puntuale} follows combining~\eqref{pun0},~\eqref{pun1} and the definition of $g_L(\ell)$

\end{remark}

\section{Auxiliary results}\label{sec: auxiliary}
In this section, we prove the technical results exploited in the previous sections.

\subsection{Proof of Properties~\ref{p0}--\ref{p3}}\label{sec::prop}
\begin{enumerate}[label=(P\arabic*), start=0]
\item We begin with the first property. We can write the Hermite expansion
\[
\sigma(Z)=\sum_{q=0}^{\infty} J_q(\sigma)H_q(Z),
\]
and using the definition of $\kappa$ (cf.~\eqref{eq:KL1}) together with the diagram formula, we obtain
\[
\kappa(u)
=C \sum_{q=0}^{\infty} J_q(\sigma)^2
\mathbb{E}\!\left[
H_q(Z_1)H_q\!\big(uZ_1+\sqrt{1-u^2}\,Z_2\big)
\right]
= \sum_{q=0}^{\infty} a_q(\sigma)^2 q! \, u^q
\]
where $a_q = J_q(\sigma) \sqrt{C}$. Since $\sigma\in\mathbb D^{1,2}$ we have
\[
\sum_{q=1}^{\infty} q\,a_q(\sigma)^2 q!<\infty,
\]
which implies that the derivative of the above power series has radius of convergence at least~$1$.
    \item The second property follows trivially from the power series representation by applying the triangle inequality and recalling that $\kappa(1)=1$.

\item
We prove that $\kappa\in C^1([-1,1])$.
Since $\kappa$ admits a power series expansion around $0$ with radius of convergence at least $1$, it is infinitely differentiable in $(-1,1)$ and
\[
\kappa'(u)=\sum_{q=1}^{\infty} q\,a_q(\sigma)^2 q!\,u^{q-1}\,.
\]
Since the coefficients are non-negative, Beppo Levi's theorem yields
\[
\lim_{u\to1^-}\kappa'(u)
=
\sum_{q=1}^{\infty} q\,a_q(\sigma)^2 q!\,.
\]
Since $\sigma\in\mathbb D^{1,2}$ we have
\[
\sum_{q=1}^\infty q\,a_q(\sigma)^2 q!<\infty,
\]
which ensures that the derivative extends continuously to $u=1$.
   \item
Arguing as above, for $u\in(-1,1)$ we obtain
\[
\kappa^{(s)}(u)
=
\sum_{q=s}^{\infty}
\frac{(q!)^2}{(q-s)!}\,a_q(\sigma)^2\,u^{q-s}\,.
\]
Hence all terms of the series are non-negative for $u\in(0,1)$.
\end{enumerate}
For the converse implication, assume that $\kappa$ admits the representation~\eqref{power-series} and define
\[
\sigma(x)=\sum_{q=0}^{\infty} \frac{a_q(\sigma)}{\sqrt{C}} H_q(x).
\]
Then the kernel of a shallow infinite-width random neural network with activation function $\sigma$ coincides with $\kappa$.
Arguing as in the proof of~\ref{p2}, condition $\kappa\in C^1([-1,1])$ implies $\sigma\in\mathbb D^{1,2}$. Indeed, let  $a_q(\sigma)$ be the coefficients of $\sigma$ with respect to the Hermite basis: the diagram formula yields $a_q = J_q(\sigma)\sqrt{C}$;
differentiating the power series term by term, we observe that $\kappa'(1)<\infty$ if and only if $\sigma \in \mathbb D^{1,2}$.

\smallskip
\noindent

\subsection{Proof of~\texorpdfstring{\Cref{fixed-point:high}}{the Theorem}: fixed-point analysis in the high-disorder regime}\label{fixed-point:high-tecn}

 The existence of a unique fixed point $b$, the fact that $b$ is necessarily $\ge0$ and the pointwise converge of $\kappa_L(x)\to b$ was proved in~\cite[Section 5]{dilillo2026largedeviationprinciplesfunctional}. Moreover, if $\kappa$ is neither even nor odd, then the convergence also holds at $-1$. The fact that $\kappa'(b)\in[0,1)$ follows by Rolle's theorem applied to $x\mapsto \kappa(x)-x$, $x=b$ and $x=1$ and the fact that $\kappa'$ is strictly increasing.
 Moreover, if $\kappa'(b)=0$, since $\kappa$ is not constant, \eqref{power-series} yields $b=0$.

 \smallskip
 \noindent

\noindent
{\bfseries Convergence of $\boldsymbol{\beta_L}$.}
 Since $\kappa\in C^\infty((-1,1))$, for every $M>\kappa''(b)/2>0$ and $\varepsilon>0$ there exists $\delta>0$ such that, for every $u\in (b-\delta,b+\delta)=:U_\delta$, we have
\begin{align}
\label{e1}
\kappa(u) &= b+\lambda(u-b)+r(u), \qquad r(u)\le M(u-b)^2,\\
\label{e2}
\kappa'(u) &\le \lambda+\varepsilon .
\end{align}

Define
$$ K_\eta = \begin{cases} [-1,1-\eta] & \text{ if } \kappa(-1)  \neq \pm 1  \\
[-1+\eta,1-\eta]  & \text{ otherwise}
\end{cases}
$$
From the compactness of $K_\eta$, it is trivial to see that for every $\delta>0$, there exists $L_0 = L_0(\delta, \eta, \kappa)$ such that,
$$ \kappa_L(x) \in (b-\delta, b+\delta), \qquad \forall x\in K_\eta, \qquad  L\geq L_0\, \footnote{For every $\ell$, set $$E_\ell = \{ x\in K_\eta\; |\; |\kappa_\ell(x) -b |< \eps\}
$$
We note that $E_{\ell}\subseteq E_{\ell+1}$, indeed if $x\in E_\ell$, using~\eqref{e1} we have
$$ \kappa_{\ell+1}(x) - b \leq \eps ( \lambda + M \eps) $$
and hence for $\eps$ small enough we have $x\in E_{\ell+1}$.

Since $\kappa$ is continuous and $\kappa_L(x)\to b$ for every $x\in K_\eta$. $E_\ell$ is an open cover of $E_\eta$ and hence
$$ K_\eta = E_{\ell_1} \cup \dots E_{\ell_k}\,.$$
The claim follows taking $L_0 = \max_{s=1, \dots k} \ell_k$\,.
}$$

From now on, $x\in K_\eta$ and  $\ell\ge L_0$. From~\eqref{e1} we obtain
\begin{equation}\label{e3}
\big|\beta_{\ell+1}(x)-\beta_\ell(x)\big|
\le
M\frac{|\kappa_\ell(x)-b|^2}{\lambda^{\ell+1}} .
\end{equation}

Now let $s>L_0$. By the mean value theorem, there exists $\xi$ in the open interval with endpoints $\kappa_{s-1}(x)$ and $b$ such that
\[
\kappa_s(x)-b
=
\kappa'(\xi)\big(\kappa_{s-1}(x)-b\big)
\le
(\lambda+\varepsilon)\big(\kappa_{s-1}(x)-b\big),
\]
where the last inequality follows from~\eqref{e2}. Iterating the previous bound yields
\[
\kappa_s(x)-b
\le
(\lambda+\varepsilon)^{\,s-L_0}\big(\kappa_{L_0}(x)-b\big)
\le
\delta(\lambda+\varepsilon)^{\,s-L_0},
\]
where the last inequality follows from the definition of $L_0$.

Substituting this estimate into~\eqref{e3}, we obtain
\[
\big|\beta_{\ell+1}(x)-\beta_\ell(x)\big|
\le
\frac{M\delta^2}{\lambda(\lambda+\varepsilon)^{2L_0}}\, q^\ell,
\qquad \forall \ell\ge L_0,
\]
where we set
\[
q:=\frac{(\lambda+\varepsilon)^2}{\lambda}\,.
\]

For $\varepsilon>0$ small enough we have $q<1$, and therefore $(\beta_L(x))_{L\ge0}$ is a Cauchy sequence (uniform in $x\in K_\eta$). This proves the desired convergence.

    \medskip
    \noindent
{\bfseries Positive and monotone convergence of $\boldsymbol{\beta_L}$ on $\boldsymbol{[b,1)}$.}
Let $x\in (b,1)$. By the mean value theorem,
\[
\beta_{L+1}(x)=\beta_L(x)\frac{\kappa'(\xi)}{\lambda},
\]
for some $\xi$ in the interval with endpoints $\kappa_L(x)$ and $b$. Since both $\kappa$ and $\kappa'$ are increasing on $[0,1]$ (see~\ref{p3}), we have
\[
\kappa'(\xi)\ge \kappa'(b)=\lambda .
\]
Since $\beta_0(x)\ge 0$, it follows that $\beta_L(x)\ge 0$ for every $L$.

\medskip
\noindent
{\bfseries Proof of~\eqref{bound-sup-high}.}
By a Taylor expansion at $1$ with second-order Lagrange remainder, there exists
$\xi_L\in(1-c\lambda^{L/\nu},1)\subset(0,1)$ such that
\[
\kappa_L(1-c\lambda^{L/\nu})
=
\kappa_L(1)
-
\kappa_L'(1)c\lambda^{L/\nu}
+
\kappa_L''(\xi_L)c^2\lambda^{2L/\nu}
\ge
\kappa_L(1)-\kappa_L'(1)c\lambda^{L/\nu},
\]
where the last inequality follows since $\kappa''(\xi_L)\ge0$ (see~\ref{p3}).
Since $\kappa_L(1)=1$ and $\kappa_L'(1)=\kappa'(1)^L=\lambda^{-L/\nu}$, we obtain
\begin{equation}\label{piccolo}
\kappa_L(1-c\lambda^{L/\nu})\ge 1-c .
\end{equation}
Now let $r\ge b$ and $x\in\big(\lambda^{1/\nu}(1-r),\,1-r\big)$. Then
\[
1-x\lambda^{L/\nu}\ge b .
\]
Since $\beta_L(x)$ is increasing in $L$ for $x\ge b$, we obtain
\[
\beta_L(1-x\lambda^{n/\nu})
\ge
\beta_n(1-x\lambda^{n/\nu})
=
\frac{\kappa_n(1-x\lambda^{n/\nu})-b}{\lambda^n}
\ge
\lambda^{-n}(1-x-b)
\ge
\lambda^{-n}(r-b).
\]

The desired bound follows by taking $r=(b+1)/2$.

   \medskip
    \noindent
{\bfseries Technical inequality}
To prove~\eqref{dominazione-high} we need to prove the following inequality
 \begin{equation}\label{pp} \kappa_L(x)\leq \kappa_L(1-c\lambda^{L/\nu}) \leq 1 - c(1-\varepsilon), \qquad \forall x\in [0, 1-c\lambda^{L/\nu}], \qquad \forall c\in (0, \delta_\varepsilon)\,.
\end{equation}
 By a Taylor expansion at  $1$  with first-order Lagrange remainder, there exists $\xi_L\in (1-c\lambda^{L/\nu},1)$ such that
\begin{equation}\label{bb00} \kappa_L(1-c\lambda^{L/\nu}) = \kappa_L(1) - \kappa_L'(\xi_L) c\lambda^{L/\nu} \leq 1 - c\lambda^{L/\nu} \kappa_L'(1-c\lambda^{L/\nu})
\end{equation}
 where the last inequality follows since $\kappa'$ increases in $[0,1]$.

\noindent
 	So, to find a good upper bound for $\kappa_L$, we need a good lower bound on $\kappa_L'$. Using~\eqref{piccolo}, with $L$ replaced by $L-i$ and $c$ replaced by $c\lambda^{i/\nu}$
    we have,
   $$ \kappa_{L-i}(1-c\lambda^{L/\nu})\ge 1-c\lambda^{i/\nu}\,.$$
From~\Cref{ass::CRI} we obtain $\kappa'(1)-\kappa'(x)\le c_\alpha (1-x)^{\alpha}$ for $\alpha=\gamma-1>0$ and some $c_\alpha>0$. Thus, using the definition of $\nu$, we have $\kappa'(1) = \lambda^{-1/\nu}$ and
 	\begin{align*}
 	\kappa_L'(1-c\lambda^{L/\nu})&=\prod_{i= 1}^L\kappa'(\kappa_{L-i}(1-c\lambda^{L/\nu}))\geq \prod_{i=1}^L\kappa'(1-c\lambda^{i/\nu})\\
    &\ge \prod_{i=1}^L(\lambda^{-1/\nu} -c_\alpha\,c^\alpha\lambda^{\alpha i/\nu})=\lambda^{-L/\nu}\prod_{i=1}^L\big(1-c_\alpha\,c^\alpha\lambda^{(\alpha i+1)/\nu}\big)\,.
 	\end{align*}
Taking   $c\in(0,c_\alpha^{-1/\alpha})$, we can use the classical Weierstrass product inequality (see e.g. \cite{DH10}). Thus
\begin{align*}
 	\lambda^{L/\nu}\kappa_L'(1-c\lambda^{L/\nu})&\geq 1 - c_\alpha\,c^\alpha  \lambda^{1/\nu}\sum_{i=1}^L \Big( \lambda^{\alpha/\nu}\Big)^i \geq 1-  C c^\alpha \,
 	\end{align*}
    where $C$ depends only on $\alpha, \lambda, \nu$. So, for  $\epsilon>0$ arbitrary small, there exists $\, \delta_\epsilon>0$  depending  only on $C$ such that for all $c\in(0,\delta_\epsilon)$
 	we have
    $$ \lambda^{L/\nu}\kappa_L'(1-c\lambda^{L/\nu}) \geq 1-\epsilon\,.$$
Combining the previous bound with~\eqref{bb00}, it follows that
$$ \kappa_L(1-c\lambda^{L/\nu}) \leq 1 - c(1-\varepsilon) $$
and since $\kappa_L$ is increasing, we obtain~\eqref{pp}

\medskip
    \noindent
    {\bfseries Proof of~\eqref{dominazione-high}} We note that proving~\eqref{dominazione-high} is equivalent to showing that
\[
|\beta_L(x)| \lesssim (1-x^2)^{-\nu}, \qquad x\in [b,1].
\]
Indeed, by uniform convergence, if $\kappa$ is neither even nor odd, then $\beta_L$ is uniformly bounded on $[-1,b]$. On the other hand, if $\kappa$ has a parity, then in order to control $|\beta_L(x)|$ on $[-1,1]$ it is enough to control it on $[0,1]$, and we again use uniform convergence on $[0,b]$.

Since $0\leq \beta_L(x)\uparrow\beta(x)$, to conclude the proof we need to prove that $$\beta(x) \lesssim (1-x)^{-\nu}, \qquad x\in [b,1)\,.$$
Using the definition of $\beta$, we have
 	\begin{equation}\label{semigruppo} \beta(x) = \lambda^{-n} \beta(\kappa_n(x))\,.
 	\end{equation}
 Let us fix $c\in(0,\delta_\epsilon)$. Then, for any $x\in(1-c,1)$, there exists a unique $L$ such that
 such that $x\in[1-c\lambda^{(L-1)/\nu},1-c\lambda^{L/\nu}]$. Thus  $(1-x)^{\nu} \leq c^{\nu}\lambda^{L-1}$ and in particular, using~\eqref{semigruppo}, we obtain
 $$ \beta(x) (1-x)^{\nu} \leq \frac{c^\nu}{\lambda}  \beta(\kappa_L(x)) \leq  \frac{c^\nu}\lambda \beta(1-c(1-\varepsilon)) $$
 where the last inequality follows from~\eqref{pp} and using that
 $\beta$ increases on $(0,1)$ (is a limit of increasing functions).
 \qed
\subsection{Proof of~\texorpdfstring{\Cref{lem: classic for contractions}}{the Lemma}: bound for contractions}
\label{sec: classic for contractions-tecn}
By the inequality $|a^rb^{q-r}|\le a^q+b^q$, we get the bound
	\[
\int_{\S^d}\left(\int_{(\S^d)^3}|f(\langle x,y \rangle)|^{r_1+r_2}|f(\langle z,w \rangle|^{r_1}|f(\langle y,w\rangle|^{r_2}\di y\di z\di w\right)\,\di x\,.
	\]
	Moreover, for fixed $x\in \S^d$, we can take $A_x\in {\rm SO}(d)$ such that $A_x(e_1)=x$. With this choice and the change of variable $y'=A_x(y)$, $z'=A_x(z)$, $w'=A_x(w)$, we get
	\begin{align*}
		&\int_{(\S^d)^3}|f(\langle x,y \rangle)|^{r_1+r_2}|f(\langle z,w \rangle|^{r_1}|f(\langle y,w\rangle|^{r_2}\di y\di z\di w
		\\
		&=\int_{(\S^d)^3}|f(\langle e_1,y' \rangle)|^{r_1+r_2}|f(\langle z',w' \rangle|^{r_1}|f(\langle y',w'\rangle|^{r_2}\di y'\di z'\di w'
		\\
		&=\int_{\S^d}|f(\langle e_1,y' \rangle)|^{r_1+r_2}\left(\int_{\S^d}\left(\int_{S^d}|f(\langle z',w' \rangle)|^{r_1}\,\di z'\right)|f(\langle y',w'\rangle|^{r_2}\di w'\right)\di y'
		\\
		&=\int_{\S^d}|f(\langle e_1,y' \rangle)|^{r_1+r_2}\left(\int_{\S^d}\left(\int_{\S^d}|f(\langle z',w' \rangle)|^{r_1}\,\di z'\right)|f(\langle y',w'\rangle|^{r_2}\di w'\right)\di y'
		\\
		&=\int_{\S^d}|f(\langle e_1,y' \rangle)|^{r_1+r_2}\left(\int_{\S^d}\left(\int_{\S^d}|f(\langle z'',e_1 \rangle)|^{r_1}\,\di z''\right)|f(\langle e_1,w''\rangle|^{r_2}\di w''\right)\di y'.
	\end{align*}
	where the last equality follows again by the invariance of the integrands under the action of $SO(d)$, with the changes of variables $z''=A_{w'}(x)$, $w''=A_{y'}(w')$. Thus, we can conclude the proof combining the previous two facts and a standard change of variable.
\qed

\subsection{Proof of~\texorpdfstring{\Cref{lem_tailcontrol}}{the Lemma}: tail control in non-critical CLT}\label{sec:tailcontrol}
For the proof in the sparse regime, we need a bound analogous to~\eqref{pp}, stated in the following lemma
\begin{lemma}
{Let \Cref{ass:cri-pot} hold. Then for some $\delta\in(0,1)$ and $L$ large enough
\begin{equation}
\label{bho-piccolo-sparso}\kappa_L(1-xL^{-\rho}) \leq 1 -\frac{x}{2} L^{-\rho}, \qquad x\in (0, \delta) \,.
\end{equation}}
\end{lemma}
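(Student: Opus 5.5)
We would mirror the argument that produced \eqref{pp} in the high-disorder case: use the mean value theorem at $1$ and convexity of $\kappa_L$ on $[0,1]$ to reduce the upper bound on $\kappa_L$ to a lower bound on $\kappa_L'$. Since $\kappa_L$ is convex on $[0,1]$ (by \ref{p3}, all derivatives of $\kappa$ are positive there, so the same holds for its iterates) and $\kappa_L(1)=1$, there is $\xi_L\in(1-xL^{-\rho},1)$ with
\[
\kappa_L(1-xL^{-\rho})=1-xL^{-\rho}\kappa_L'(\xi_L)\le 1-xL^{-\rho}\,\kappa_L'(1-xL^{-\rho}).
\]
So it suffices to show $\kappa_L'(1-xL^{-\rho})\ge 1/2$ for $x\in(0,\delta)$ and $L$ large.

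By the chain rule, $\kappa_L'(1-xL^{-\rho})=\prod_{i=0}^{L-1}\kappa'(u_i)$, where $u_i:=\kappa_i(1-xL^{-\rho})$. Since $\kappa'(1)=1$ and $\kappa(u)\le u$ near $1$ (recall $\kappa(u)<1$ and $\kappa'\le 1$), the $u_i$ decrease, and we need to keep them close to $1$. Set $t_i:=1-u_i$. \Cref{ass:cri-pot} gives
\[
t_{i+1}=t_i-c_1t_i^{\gamma_1}+O(t_i^{\gamma_2})\le t_i+Ct_i^{\gamma_1}.
\]
Comparing with the ODE $\dot t=Ct^{\gamma_1}$, whose solution is $t(s)=(t_0^{-\nu}-C\nu s)^{-1/\nu}$ (recall $\nu=\gamma_1-1$, $\rho=1/\nu$), and using $t_0=xL^{-\rho}$ so that $t_0^{-\nu}=x^{-\nu}L$, we get for $i\le L$
\[
t_i\le \bigl(x^{-\nu}L-C\nu i\bigr)^{-\rho}\le \bigl((x^{-\nu}-C\nu)L\bigr)^{-\rho}\le 2xL^{-\rho},
\]
provided $\delta$ is chosen so that $x^{-\nu}\ge 2^{\nu}C\nu+\dots$, that is, $x$ is small. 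Rigorously, this comparison is an induction on the monotone map $t\mapsto t+Ct^{\gamma_1}$, using the convexity of $s\mapsto s^{-\nu}$. If $c_1>0$, the $t_i$ are even monotone nonincreasing and $t_i\le t_0$ trivially, but we do not want to rely on the sign of $c_1$.

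Next, \Cref{ass:cri-pot} together with the real-analytic structure gives $\kappa'(1-t)\ge 1-C't^{\gamma_1-1}$ for small $t$. Differentiating the expansion is legitimate because $\kappa'$ is increasing on $[0,1]$; alternatively, we can avoid derivatives by applying the same mean-value/convexity argument as in \eqref{pp}. Then, for $x<\delta$,
\[
\log\kappa_L'(1-xL^{-\rho})\ge -2C'\sum_{i<L}t_i^{\nu}\ge -2C'L\,(2x)^{\nu}L^{-1}=-C''x^{\nu}\ge \log\tfrac12 .
\]
Choosing $\delta$ small concludes the proof.

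The main obstacle is the uniform control $t_i\lesssim xL^{-\rho}$ over all $L$ steps. A single step perturbs $t$ by $O(t^{\gamma_1})$, and this is exactly of the size that accumulates over $L\sim t_0^{-\nu}$ iterations. The ODE comparison handles this, but it needs the smallness of $x$ to prevent blow-up before time $L$, and this is precisely why the statement is restricted to $x\in(0,\delta)$.
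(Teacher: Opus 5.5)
Your proof is correct and follows essentially the paper's route: the mean value theorem plus monotonicity of $\kappa_L'$ on $[0,1]$ reduce the claim to $\kappa_L'(1-xL^{-\rho})\ge 1/2$, and the chain rule together with $\kappa'(1-t)\ge 1-Ct^{\nu}$ and $t^{\nu}$ of order $x^{\nu}/L$ then gives it. One slip: $\kappa'\le\kappa'(1)=1$ and $\kappa(1)=1$ give $\kappa(u)\ge u$, not $\kappa(u)\le u$, so the $u_i$ increase to $1$ and $t_i\le t_0=xL^{-\rho}$ automatically, with no sign condition on $c_1$; this is how the paper gets $\kappa_L'(1-xL^{-\rho})\ge\kappa'(1-xL^{-\rho})^L\ge(1-\beth x^{\nu}/L)^L\ge 1/2$ directly, so your ODE comparison is valid but unnecessary.
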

\begin{proof}
Let $y\in (0,1)$, by Lagrange theorem, there exists $\xi \in (y,1)$ such that
$$ \kappa_L(y) = \kappa_L(1) - (1-y) \kappa_L'(\xi)$$
and hence
$$\kappa_L(y) \leq 1  - (1-y) \kappa_L'(y)\,.$$
In particular, for every $x<1$ we have
$$\kappa_L(1-xL^{-\rho}) \leq 1  - xL^{-\rho} \kappa_L'(1-xL^{-\rho})\,.$$
Now, since $\kappa(u) > u$ and $\kappa'$ is increasing in $[0,1]$, we have
$$ \kappa_L'(1-xL^{-\rho}) = \prod_{s=1}^L \kappa'(\kappa_s(1-xL^{-\rho})) \geq \Big( \kappa'(1-xL^{-\rho})\Big)^{L}\,.$$
Using~\Cref{ass:cri-pot}, there exists constants $\beth>0, \delta\in(0,1)$ such that for $x\in(0,\delta)$
$$ \kappa'(1-xL^{-\rho}) \geq 1 - \beth(xL^{-\rho})^{1/\rho} =  1 - \frac{ \beth x^{1/\rho} }L \,.$$
Thus, for $x\in(0,\delta)$ and $L$ large enough, we can conclude observing
 $$ \Big(\kappa'(1-xL^{-\rho})\Big)^L\ge \Big(1 - \frac{ \beth x^{1/\rho}}{L}\Big)^L \ge \frac12\,.$$
\end{proof}

\medskip
We prove that~\Cref{lem_tailcontrol} holds both in the sparse and in the high-disorder regimes.
\begin{proof}[Proof of~\Cref{lem_tailcontrol}]
    We observe that by arguing as in \cite[Proof of Lemma 3.1]{MRZ25}, and using~\Cref{cor:boun-hatK-iper}, we obtain
    \[
     \E\left[\left(\widetilde F_L-\widetilde F_{L,N}\right)^2\right]\lesssim \,\frac{\,\|\varphi\|^2}{v_L^{d/(2\nu)}}\,\int_{(\S^d)^2}|\widehat \kappa_L(\langle x,y \rangle)|^N\,\di x \di y\lesssim\,\frac{\int |\widehat \kappa_L(u)|^N\,\di \mu(u)}{v_L^{d/(2\nu)}}\,.
    \]
    It remains to control uniformly $\int |\widehat \kappa_L(u)|^N\,\di \mu(u)$ as $N\rightarrow\infty$.

    \smallskip
    \noindent {\bfseries High-disorder.} Using~\eqref{pp} (with $c$ replaced by $x$) and the bound $1-C_0(L)\le1$,
there exists $\delta_{1/2}>0$ such that for all $x\in(0,\delta_{1/2})$,
\[
\widehat\kappa_L(1-x\lambda^{L/\nu})
\le
\frac{1-\frac{x}{2}-C_0(L)}{1-C_0(L)}
=
1-\frac{x}{2(1-C_0(L))}
\le
1-\frac{x}{2}\,.
\]

     \smallskip
    \noindent {\bfseries Sparse.} Using~\eqref{bho-piccolo-sparso} and since $L^\rho(1-C_0(L)) \to \beta_0 $, for $L$ large enough and $x<\delta_{1/2}$
    $$  \widehat\kappa_L(1-xL^{-\rho}) \le
\frac{1-\frac x2 L^{-\rho}-C_0(L)}{1-C_0(L)}
= 1 - \frac{x/2}{\beta_0/2}\,.$$

\medskip
\noindent
Summarizing in both cases, recalling that $\nu^{-1} = {\rho}$, there exists $\delta_{1/2}$ and a constant $c>0$ such that, for $x\in (0, \delta_{1/2})$
$$  \widehat\kappa_L(1-xv_L^{\rho}) \leq 1 - \frac{x}{c}\,.$$
    Therefore, choosing $\eps<1 \wedge\delta_{1/2}$ and arbitrary small, by a change of variable $u=1-xv_L^{\rho}$ and recalling that $\nu = \rho^{-1}$, we get
	\begin{align*}
	\int_{1-\eps v_L^{\rho}}^1 |\widehat \kappa_L(u)|^N\,\di \mu(u) &\lesssim v_L^{d/(2\nu)} \int_0^\varepsilon |\widehat \kappa_L(1-xv_L^{\rho})|^N x^{d/2-1}  \di x  \lesssim v_L^{d/(2\nu)}\int_{0}^\eps (1-x/c)^N\,x^{d/2-1}\,\di x\\
    &\lesssim v_L^{d/(2\nu)}\int_{0}^1 (1-z)^N\,z^{d/2-1}\,\di z
	\end{align*}
   where the last inequality follows by setting $z=x/c$. Using the classical asymptotics of the Beta function,
\[
\int_0^1 (1-z)^N z^{d/2-1}dz \asymp N^{-d/2}\,.
\]
Hence, for $L,N$ sufficiently large, we get
    \[
    \frac{\int_{1-2v_L}^1 |\widehat \kappa_L(u)|^N\,\di \mu(u)}{v_L^{d/(2\nu)}}\lesssim N^{-d/2}\,.
    \]
    On the other hand, since $\kappa_L(x)$ is increasing for $x\in[0,1]$, we have
    $$\widehat  \kappa_L(u)\le \widehat  \kappa_L(1-\eps v_L^{\rho}), \qquad u \in (0, 1-\varepsilon v_L^{\rho})\, .$$
    Thus,
    \[
     \int_{0}^{1-\eps v_L^{\rho}}\widehat \kappa_L(u)^{N}\,\di \mu(u)\le (1-\eps/c)^{N-Q}\int_{0}^{1-\eps v_L^{\rho}}\widehat \kappa_L(u)^{Q}\,\di \mu(u)\lesssim (1-\eps/2)^{N-Q}v_L^{d/(2\nu)}
    \]
    where the last inequality follows from~\Cref{cor:boun-hatK-iper}.
    \end{proof}

\subsection{Proof of~\texorpdfstring{\Cref{lem:analytic conservation}}{the Lemma}: justification of the exchange of sum and integral}
\label{lem:analytic conservation-tecn}
First of all, note that since $\langle \theta_1,\theta_2\rangle ^k$ is a covariance kernel on $\S^d$, by Schoenberg's theorem~\cite{schoenberg} $u\mapsto u^k$ can be expressed as follows
\begin{equation}
    \label{eq:monomial expansion}
    u^k=\sum_{\ell=0}^k\,a_{k,\ell}\,n_{\ell}\,G_{\ell}(u)\,\quad\quad \quad a_{k,\ell}=\int_{-1}^1u^{k}G_{\ell}(u)\di \mu(u)\ge0\,.
\end{equation}
Moreover, $f$ is real analytic with radius at least $1$ and $f^{(k)}(0)\ge0$ for $k\ge k_0$, so  by Fubini-Tonelli\footnote{Recall that $
|G_\ell|\le 1$, $f^{(k)}(0)\ge0$ for $k\ge\bar k$ and $f(u)\in L^p(\mu)\subseteq L^1(\mu)$ implies $f(|u|)\in L^1(\mu)$, so we have $$\sum_{k_i=0}^\infty \frac{|f^{(k_i)}(0)|}{k_i!}\int_{-1}^1|u|^{k_i}|G_{\ell_i}(u)|\di \mu(u)\lesssim \int_{-1}^1f(|u|)\di \mu(u)<\infty.$$}
\[
\int_{-1}^1f(u)G_{\ell_i}(u)\di \mu(u)=\sum_{k_i=0}^\infty \frac{f^{(k_i)}(0)}{k_i!}\int_{-1}^1u^{k_i}G_{\ell_i}(u)\di \mu(u)=\sum_{k_i=0}^{\infty} \frac{f^{(k_i)}(0)}{k_i!}a_{k_i,\ell_i}\mathbbm{1}_{\ell_i\le k_i}\,.
\]
Thus, exchanging the sums in $\ell_1,\dots,\ell_p$ and $k_1,\dots,k_p$ (i.e. applying again Fubini-Tonelli), we get
\begin{align*}
    &\sum_{\ell_1, \dots, \ell_p \in \N} \prod_{i=1}^p   n_{\ell_i,d }\,\left(\int_{-1}^1f(u)G_{\ell_i}(u)\di \mu(u)\right)\widehat{\mathcal G_p}(\ell_1, \dots,\ell_p)
    \\
    &=\sum_{k_1, \dots, k_p \in \N}\,\prod_{i=1}^p\frac{f^{(k_i)}(0)}{k_i!}\sum_{\ell_1=0}^{k_1}\dots\sum_{\ell_p=0}^{k_p} \prod_{i=1}^p   n_{\ell_i,d }\,a_{k_i,\ell_i}\widehat{\mathcal G_p}(\ell_1, \dots,\ell_p)
    \\
    &=\sum_{k_1, \dots, k_p \in \N}\,\prod_{i=1}^p\frac{f^{(k_i)}(0)}{k_i!}\int_{-1}^1\left(\sum_{\ell_1=0}^{k_1}\dots\sum_{\ell_p=0}^{k_p} \prod_{i=1}^p   n_{\ell_i,d }\,a_{k_i,\ell_i}G_{\ell_i}(u)\,\right)\di \mu(u)
    \\
    &=\sum_{k_1, \dots, k_p \in \N}\,\prod_{i=1}^p\frac{f^{(k_i)}(0)}{k_i!}\int_{-1}^1u^{k_1}\dots u^{k_p}\di \mu(u)\,\,,
\end{align*}
where  the second-last equality follows by definition \eqref{eq:Ghat} and the last equality by \eqref{eq:monomial expansion}.
Thus, the proof is concluded by observing that again by Fubini-Tonelli we have
\[
\sum_{k_1, \dots, k_p \in \N}\,\prod_{i=1}^p\frac{f^{(k_i)}(0)}{k_i!}\int_{-1}^1u^{k_1}\dots u^{k_p}\di \mu(u)=\int\left(\sum_{k=0}^\infty \frac{f^{(k)}(0)}{k!}u^k\right)^p\di \mu(u)=\int_{-1}^1f^p(u)\di \mu(u)\,.
\]
\qed

\subsection{Proof of~\texorpdfstring{\Cref{fixed-point-sparse}}{the Theorem}: fixed-point analysis in the sparse regime.}\label{fixed-point-sparse-tecn}
To prove~\Cref{fixed-point-sparse} we need the following auxiliary lemmas. We will sometimes use the notation
\[
\eps_\ell(x):=(1-\kappa_\ell(x))\,\,.
\]
In~\cite{nostro,dilillo2026largedeviationprinciplesfunctional}, the authors proved that
\begin{equation}\label{base-convergenza}\lim_{\ell\to + \infty} \varepsilon_\ell(x) = 0, \qquad x\in [-1,1]\,
\end{equation}
In~\Cref{fixed-point-sparse} we need a quantitative version of this convergence.

\begin{lemma}\label{lem52}
Let $\kappa$ be  not even and satisfy~\Cref{ass:cri-pot}. Setting $ A_0 = c_1(\gamma_1-1)$ and
$A_1 = \frac{\gamma_1\,c_1}{2} +\frac{c_2}{c_1} \mathbf 1_{\gamma_2 = 2\gamma_1-1}$\footnote{In this scenario, no assumption is made on the sign of $A_1$. For instance, if $\kappa$ is the kernel associated with the Gaussian activation function in the sparse case (see~\cite{nostro2}), then $A_1 = 0$.}. Then
the renormalized iterates
\begin{equation}\label{SL-def} S_L(x) := (1-\kappa_L(x))^{1-\gamma_1} - A_0 L - A_1 \log L,
\qquad x\in[-1,1),
\end{equation}
converge pointwise as $L \to \infty$ to a limit function $S:[-1,1)\to \mathbb R$. Moreover, there exists a positive constant $D$  independent from $L$ and $x$ such that
\begin{align}\label{upp=SL}S_L(x)& \lesssim (1-x)^{1-\gamma_1}
\\
\label{low=SL}S_L(x) &\gtrsim (1-x)^{1-\gamma_1} -D
\end{align}

\end{lemma}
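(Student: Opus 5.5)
The plan is to work with $\eps_\ell=\eps_\ell(x)=1-\kappa_\ell(x)$ and the conjugated quantity $s_\ell:=\eps_\ell^{1-\gamma_1}$, and to turn the one-step recursion $\eps_{\ell+1}=1-\kappa(1-\eps_\ell)$ into an almost-additive recursion for $s_\ell$. By \Cref{ass:cri-pot}, as $\eps\to0^+$,
\[
1-\kappa(1-\eps)=\eps-c_1\eps^{\gamma_1}-c_2\eps^{\gamma_2}(-c_3\eps^{\gamma_3})+o(\cdot).
\]
Since $\eps_\ell\to0$ by \eqref{base-convergenza}, we may fix $\ell_0$ with $\eps_\ell$ small for $\ell\ge\ell_0$. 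Put $\nu=\gamma_1-1$, so that $(1-t)^{-\nu}$ enters the expansion. Expanding $s_{\ell+1}=\eps_\ell^{-\nu}\big(1-c_1\eps_\ell^{\nu}-c_2\eps_\ell^{\gamma_2-1}-\dots\big)^{-\nu}$ to second order gives
\[
s_{\ell+1}-s_\ell=A_0+a_1\,\eps_\ell^{\nu}+a_2\,\eps_\ell^{\gamma_2-\gamma_1}+O\big(\eps_\ell^{\eta}\big),\qquad A_0=\nu c_1 ,
\]
for some $\eta>\nu$. In case (A) we have $\gamma_2-\gamma_1>\nu$, so the $a_2$ term is absorbed. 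In case (B) we have $\gamma_2-\gamma_1=\nu$, so it adds to $a_1$, which produces $c_2/c_1$. The combined coefficient of $\eps_\ell^{\nu}=1/s_\ell$ should be $A_0A_1$. I will verify this once by direct computation of the second-order binomial term; it is routine.

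\emph{Step 1 (linear growth).} From $s_{\ell+1}-s_\ell=A_0+O(1/s_\ell)$ and $s_\ell\to\infty$, summation gives $s_\ell=A_0\ell+O(\log \ell)+s_{\ell_0}$. More usefully, it gives two-sided bounds
\[
A_0(\ell-\ell_0)+s_{\ell_0}-C\log\ell\le s_\ell\le A_0(\ell-\ell_0)+s_{\ell_0}+C\log\ell .
\]
I expect the constants here to be uniform in $x$ once $\eps_{\ell_0}(x)$ is small. To arrange this, I will instead start the recursion at $\ell=0$ for $x$ near $1$. For the remaining $x\in[-1,1-\delta]$, the fact that $\kappa$ is not even gives $\kappa(x)<1$ on a compact set, and monotonicity of $\kappa_\ell$ on $[0,1]$ together with $|\kappa(u)|\le\kappa(|u|)$ reduces everything to a compact set where a fixed number of iterations brings $\eps$ below threshold.

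\emph{Step 2 (logarithmic correction and convergence).} Write
\[
S_{\ell+1}-S_\ell=(s_{\ell+1}-s_\ell)-A_0-A_1\log\!\big(1+\tfrac1\ell\big)=A_0A_1\Big(\frac1{s_\ell}-\frac1{A_0\ell}\Big)+O(\ell^{-2})+O(s_\ell^{-\eta/\nu}).
\]
By Step 1, $1/s_\ell-1/(A_0\ell)=O(\log\ell/\ell^2)$, and $s_\ell^{-\eta/\nu}=O(\ell^{-\eta/\nu})$ with $\eta/\nu>1$. Hence the increments are summable and $S_\ell(x)$ converges pointwise to some $S(x)$.

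\emph{Step 3 (bounds \eqref{upp=SL}, \eqref{low=SL}).} Summing the increment bounds from $0$, for $x$ close to $1$, gives $|S_L(x)-S_0(x)|\le D$ with $D$ independent of $L$ and $x$, where $S_0(x)=(1-x)^{-\nu}$. This gives both inequalities near $1$, since $(1-x)^{-\nu}\ge1$ absorbs additive constants in the upper bound. On $[-1,1-\delta]$, the quantity $(1-x)^{-\nu}$ is bounded above and below, and Step 1 shows $S_L$ is uniformly bounded there, so both bounds hold after adjusting constants.

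The main obstacle is uniformity in $x$ near $1$: the error terms must be controlled with constants independent of the starting point. In particular the $O(1/s_\ell)$ sums must be bounded using $s_\ell\ge s_0+A_0\ell/2$, rather than asymptotic equivalence, which yields $\sum_\ell 1/s_\ell\lesssim\log L$ uniformly. The resulting $\log L$ terms must then cancel exactly against $A_1\log L$ up to a bounded error. I will handle this by running the Step 2 telescoping with the comparison $1/s_\ell-1/(s_0+A_0\ell)$ in place of $1/(A_0\ell)$, which is summable uniformly in $s_0\ge1$. The leftover $\sum 1/(s_0+A_0\ell)-\log L/A_0$ is bounded above uniformly, and bounded below by $-C\log(1+s_0)\gtrsim -C'(1-x)^{-\nu}$-type terms absorbable into the bounds.
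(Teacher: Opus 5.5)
Your plan is correct and follows the paper's proof in substance. It uses the same conjugation $s_\ell=\eps_\ell^{1-\gamma_1}$, the same one-step identity $s_{\ell+1}-s_\ell=A_0+A_0A_1s_\ell^{-1}+O(s_\ell^{-\eta/\nu})$ (the paper's \eqref{esatta}), telescoping against the harmonic sum, and a uniform entry time. Your comparison with $1/(s_0+A_0\ell)$ is a tidy substitute for the paper's separate upper/lower estimates and its case split on the sign of $A_1$.

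Two intermediate claims are false, and you should discard them:
\begin{itemize}
\item \emph{Step 3's uniform bound.} The claim that $|S_L(x)-S_0(x)|\le D$ uniformly near $x=1$ fails when $A_1\neq0$, because one has $S(x)=(1-x)^{1-\gamma_1}-A_1(\gamma_1-1)\log\frac{1}{1-x}+O(1)$ as $x\to1^-$.
\item \emph{Step 1 on $[-1,1-\delta]$.} Step 1 alone only gives $|S_L|=O(\log L)$ there, not a uniform bound.
\end{itemize}
What actually proves \eqref{upp=SL}--\eqref{low=SL} is the estimate in your last paragraph. There the $O(\log(1+s_0))$ loss is absorbed into the multiplicative constants of the bounds. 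On the compact part, you get uniform boundedness by running the same refined telescoping started at $\ell_1$.
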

\begin{remark}
    The analysis when $\kappa$ is even is analogous. Indeed  the proof of~\Cref{lem52} works also for $\kappa$ even, if one restrict $x\in [0,1)$. Since also $S_L$ is even we simply get
    \begin{align}\label{upp=SL-even}S_L(x)& \lesssim (1-x^2)^{1-\gamma_1}
\\
\label{low=SL-even}S_L(x) &\gtrsim (1-x^2)^{1-\gamma_1} -D
\end{align}
\end{remark}

\begin{proof}[Proof of the case $\gamma_2>2\gamma_1-1$]
First of all, by~\Cref{ass:cri-pot}, there exist $D,\delta_1>0$ such that
\begin{align}\nonumber 1 - \kappa(x)  = (1-x) - c_1(1-x)^{\gamma_1}- r(1-x)\,,\\
\label{dbound}|r(1-x)|\leq D (1-x)^{\gamma_2} \quad \forall\,\,x\in (1-\delta_1, 1]\,.
\end{align}
By Taylor expansions of $t\mapsto t^{1-\gamma_1}$, for some $\xi_\ell(x)$ in the interval with endpoints $\varepsilon_{\ell+1}(x) $ and $\varepsilon_\ell(x)$, we have
\begin{align}
\nonumber\varepsilon_{\ell+1}(x)^{1-\gamma_1} =&\varepsilon_{\ell}(x)^{1-\gamma_1}  + (\gamma_1-1)\varepsilon_{\ell}(x)^{-\gamma_1}\Big( c_1 \varepsilon_\ell(x)^{\gamma_1} + r(\varepsilon_\ell(x))\Big) \\
\nonumber&+ \frac{\gamma_1(\gamma_1-1)} 2 \varepsilon_\ell(x)^{-\gamma_1-1} \Big(  c_1 \varepsilon_\ell(x)^{\gamma_1} + r(\varepsilon_\ell(x))\Big)^2\\
\nonumber&  + \frac{(\gamma_1+1)\gamma_1(\gamma_1-1)}{3!} \xi_{\ell}(x)^{-\gamma_1-2} \Big(c_1 \varepsilon_\ell(x)^{\gamma_1} + r(\varepsilon_\ell(x))\Big)^3\\
\nonumber= & \varepsilon_{\ell}(x)^{1-\gamma_1}  + (\gamma_1-1)c_1 + \frac{\gamma_1(\gamma_1-1)c_1^2} 2 \varepsilon_\ell(x)^{\gamma_1-1}  + R_\ell(x)\\
\label{esatta}= & \varepsilon_{\ell}(x)^{1-\gamma_1}  + A_0 + A_0 A_1\varepsilon_\ell(x)^{\gamma_1-1}  + R_\ell(x)
\end{align}
where  $R_\ell(x)$ contains all the other terms in the expansions, i.e.
\begin{align*} R_\ell(x) =& (\gamma_1-1) \varepsilon_\ell(x)^{-\gamma_1} r(\varepsilon_\ell(x))
+ \frac{\gamma_1(\gamma_1-1)} 2 \varepsilon_\ell(x)^{-\gamma_1-1} \Big(  2c_1 \varepsilon_\ell(x)^{\gamma_1}r(\varepsilon_\ell(x)) + r(\varepsilon_\ell(x))^2\Big)\\
\nonumber&  + \frac{(\gamma_1+1)\gamma_1(\gamma_1-1)}{3!
} \xi_{\ell}(x)^{-\gamma_1-2} \Big(c_1 \varepsilon_\ell(x)^{\gamma_1} + r(\varepsilon_\ell(x))\Big)^3\,.
\end{align*}
Let $\delta<\delta_1$ (that will be chosen later) and let $L_\delta>0$ such that $\kappa_\ell(x)
\in (1-\delta, 1]$ for all $\ell\geq L_\delta$ and for all $x\in[-1,1]$\footnote{To prove that such a $L_\delta$ exists, one can reason as at the beginning of \Cref{fixed-point:high-tecn}.}. \textbf{From now on}, let $\ell\geq L_\delta$. For $\delta$ small enough,
$\varepsilon_\ell/2\leq \xi_\ell \leq {\varepsilon_\ell}  $
\footnote{Since $\kappa'(x)\geq 0$,  $\kappa(x)>x$. Thus $\varepsilon_\ell(x)\geq \varepsilon_{\ell+1}(x)$. Moreover, by \eqref{dbound} for $\delta$ small enough we have $|\varepsilon_\ell-\varepsilon_{\ell+1}|\le C \varepsilon_\ell(x)^{\gamma_1-1}\varepsilon_\ell(x)\le C\delta^{\gamma_1-1} \varepsilon_\ell(x)<\varepsilon_\ell(x)/2$}
and so
\begin{equation}\label{R-bound}|R_\ell(x)| \leq C\varepsilon_\ell(x)^\alpha, \qquad \alpha = \min\{\gamma_2-1, 2 (\gamma_1-1)\}> \gamma_1-1\,.
\end{equation}
Note now that by a telescopic argument, for any $L\ge L_0\ge L_\delta$, denoting $H_L=\sum_{\ell=0}^{L-1}\frac{1}{\ell+1}$, we have
\begin{align}\nonumber S_L(x) =& (1-x)^{1-\gamma_1} +  \sum_{\ell=0}^{L-1}\Big( \varepsilon_{\ell+1}(x)^{1-\gamma_1} - \varepsilon_{\ell}(x)^{1-\gamma_1} -A_0 - \frac{A_1}{\ell+1}\Big) + A_1(H_L - \log L) \\
\nonumber = &A_1(H_L - \log L )   - A_0 L_0 - A_1 H_{L_0}  \\
\nonumber & + \varepsilon_{L_0}(x)^{1-\gamma_1}+ \sum_{\ell=L_0}^{L-1}\Big( \varepsilon_{\ell+1}(x)^{1-\gamma_1} - \varepsilon_{\ell}(x)^{1-\gamma_1} -A_0 - \frac{A_1}{\ell+1}\Big)
\\
\nonumber = &A_1(H_L - \log L )   - A_0 L_0 - A_1 H_{L_0}   +\sum_{\ell=L_0}^{L-1}\Big( \varepsilon_{\ell+1}(x)^{1-\gamma_1} - \varepsilon_{\ell}(x)^{1-\gamma_1} -A_0 - A_1A_0 \varepsilon_\ell(x)^{\gamma_1-1}\Big)   \\
\nonumber & + \varepsilon_{L_0}(x)^{1-\gamma_1}+  A_1 \sum_{\ell=L_0}^{L-1}\Big( A_0\varepsilon_\ell(x)^{\gamma_1-1} - \frac{1}{\ell+1}\Big)
\\
\nonumber = &  A_1(H_L - \log L )   - A_0 L_0 - A_1 H_{L_0}   +\sum_{\ell=L_0}^{L-1} R_\ell(x)  + \varepsilon_{L_0}(x)^{1-\gamma_1}+  A_1 \sum_{\ell=L_0}^{L-1}\Big( A_0\varepsilon_\ell(x)^{\gamma_1-1} - \frac{1}{\ell+1}\Big)
\end{align}
Let us prove an upper bound for $S_L(x)$.
Since $A_0A_1> 0$, for $\delta$ small enough, by \eqref{R-bound} we have $$A_0A_1\varepsilon_\ell(x)^{\gamma_1-1} + R_\ell(x)\geq 0\,.$$
Thus, by \eqref{esatta} we have
$$ \varepsilon_\ell(x)^{1-\gamma_1} = \varepsilon_{L_0}(x)^{1-\gamma_1}+ \sum_{k=L_0}^{\ell-1} \varepsilon_{k+1}(x)^{1-\gamma_1} - \varepsilon_{k}(x)^{1-\gamma_1} \geq \varepsilon_{L_0}(x)^{1-\gamma_1} + A_0(\ell-L_0)\,.$$
and
\begin{equation}\label{eps-bound}
    \varepsilon_\ell(x) \leq \frac{1}{\Big(\varepsilon_{L_0}(x)^{1-\gamma_1} + A_0(\ell-L_0) \Big)^{\frac{1}{\gamma_1-1}} } \leq A_0^{-\frac{1}{\gamma_1-1}} (\ell-L_0)^{-\frac{1}{\gamma_1-1}}
\end{equation}
where the last inequality follows since $\varepsilon_{L_0}(x)\geq 0$. Using \eqref{R-bound}-\eqref{eps-bound} with $L_0=L_\delta$, we get
\begin{equation}
    |R_\ell(x)| \leq C (\ell-L_\delta)^{-\frac{\alpha}{\gamma_1-1}}\,\,,\label{boundforRwithdelta}
\end{equation}
$$ A_0 \varepsilon_{\ell}(x)^{\gamma_1-1} - \frac{1}{\ell+1} \leq \frac{1}{\ell-L_\delta} - \frac{1}{\ell+1} = \frac{L_\delta+1}{(\ell+1)(\ell-L_\delta)} \leq  C (\ell -L_\delta)^2\,.$$ Using the previous decomposition of $S_L$, the previous bounds and  since $H_L-\log L \leq 1 $, we obtain
$$ S_L(x) \leq A_1 - A_0 L_\delta - A_1 H_{L_\delta} +  C \zeta\Bigg(\frac{\alpha}{\gamma-1}\Bigg) + C \zeta(2)  + \varepsilon_{L_\delta}(x)^{1-\gamma_1} \le\,C (1-x)^{1-\gamma_1}$$
where the last inequality follows by $x\mapsto (1-x)(1-\kappa_{L_\delta}(x))^{-1}$ positive and continuous function.

\medskip
\noindent
Let us move to the lower bound.  Using~\eqref{esatta}, \eqref{eps-bound} and \eqref{boundforRwithdelta}, for $\ell\ge L_0\ge L_\delta$ we obtain
$$ \varepsilon_{\ell+1}(x)^{1-\gamma_1} -\varepsilon_\ell (x)^{1-\gamma_1} \leq A_0 + \frac{A_0A_1}{\varepsilon_{L_0}(x)^{1-\gamma_1} + A_0 (\ell-L_0)} + C \ell^{-\frac{\alpha}{\gamma_1-1}}\,,$$
where $C$ does not depend on $L_0$. Hence, using again a  telescopic argument, we have
\begin{align}\nonumber
 \varepsilon_L(x)^{1-\gamma_1} &\leq \varepsilon_{L_0}(x)^{1-\gamma_1} + A_0(L-L_0) + A_1 \log(L + A_0^{-1}\varepsilon_{L_0}(x)^{1-\gamma_1}) +C\\
 &\leq \varepsilon_{L_0}(x)^{1-\gamma_1} + A_0L + 2A_1 \log(L + A_0^{-1}\varepsilon_{L_0}(x)^{1-\gamma_1})\,,
\end{align}
where the last inequality holds for $L\ge L_0$ and $L_0$ large enough. Then, for $\ell\ge L_0\ge L_\delta$ and $\delta>0$ small enough we get
\begin{align*}
 A_0 \varepsilon_\ell(x)^{\gamma_1-1} - \frac{1}{\ell+1} &
    \geq \frac{A_0 (\ell+1) - \Big(\varepsilon_{L_0}(x)^{1-\gamma_1} + A_0\ell + 2A_1 \log(\ell + A_0^{-1}\varepsilon_{L_0}(x)^{1-\gamma_1})\Big)}{(\ell+1)\Big( \varepsilon_{L_0}(x)^{1-\gamma_1} + A_0\ell + 2A_1 \log(\ell + A_0^{-1}\varepsilon_{L_0}(x)^{1-\gamma_1}) \Big)}
    \\
    & \geq -\frac{\varepsilon_{L_0}(x)^{1-\gamma_1} +  2A_1 \log(\ell + A_0^{-1}\varepsilon_{L_0}(x)^{1-\gamma_1}) }{A_0\ell^2}\\
    &=  -\frac{2 A_1 \log(\ell) + \varepsilon_{L_0}(x)^{1-\gamma_1} +  2A_1 \log \Big( 1  + \frac{\varepsilon_{L_0}(x)^{1-\gamma_1}}{A_0 \ell}\Big) }{A_0\ell^2}\\
    &\geq  -\frac{2  A_1 \log(\ell)}{A_0\ell^2} - \varepsilon_{L_0}(x)^{1-\gamma_1} \frac{ 1 + \frac{2A_1}{A_0\ell}}{A_0\ell^2}\\
    &\geq  -\frac{2 A_1 \log(\ell)}{A_0\ell^2} - \varepsilon_{L_0}(x)^{1-\gamma_1} \frac{ 1 +2A_1A_0^{-1}}{A_0\ell^2}\,.
\end{align*}
where the second-last inequality follows by $\log(1+a)\le a$. Using the decomposition of $S_L$, since $H_L-\log L\geq 0 $, we obtain
$$ S_L(x)\geq - A_0 L_0 -A_1H_{L_0} - C \zeta\Big(\frac{\alpha}{\gamma-1}\Big) - \sum_{\ell=L_0}^\infty \frac{2A_1^2\log (\ell)}{A_0\ell^2} + \varepsilon_{L_0}(x)^{1-\gamma_1} \Bigg( 1 - \frac{A_1(A_0+2A_1)}{A_0^2} \sum_{\ell=L_0}^\infty \frac{1}{\ell^2}\Bigg)\,.$$
Now, since
$$ \sum_{\ell=L_0}^\infty \frac{1}{\ell^2}\leq \frac1{L_0 -1}, $$
we obtain
$$ S_L(x) \geq  -D + \frac{A_0^2(L_0-1) -A_1(A_0+2A_1)}{A_0^2 (L_0-1)}\varepsilon_{L_0}(x)^{1-\gamma_1} $$
and choosing $L_0$ large enough we get the lower bound \eqref{low=SL}. To conclude, we only have to prove that $S_L(x)$ converges pointwise to $S(x)$ for every $x\in[-1,1)$. But this immediately follows by the decomposition of $S_L$, the fact that $H_L-\log(L)$ converges to the Euler-Mascheroni constant $\gamma$ and the absolute convergence (for fixed $x$) of the two series in the decomposition of $S_L$ proved in the previous steps. In particular, the limit function $S$ is well defined as
\[
S(x):=A_1\gamma   - A_0 L_0 - A_1 H_{L_0}   +\sum_{\ell=L_0}^{\infty} R_\ell(x)  + \varepsilon_{L_0}(x)^{1-\gamma_1}+  A_1 \sum_{\ell=L_0}^{\infty}\Big( A_0\varepsilon_\ell(x)^{\gamma_1-1} - \frac{1}{\ell+1}\Big)\,.
\]
\end{proof}
\begin{proof}[Proof of the case $\gamma_2=2\gamma_1-1$]Using the same computations as in the previous cases and recalling that $\gamma_1-1=\gamma_2-\gamma_1$,  we have
\begin{align}
\label{risolutiva}\varepsilon_{\ell+1}(x)^{1-\gamma_1} = \varepsilon_{\ell}(x)^{1-\gamma_1}  + A_0 + A_0 A_1\varepsilon_\ell(x)^{\gamma_1-1}  + R_\ell(x)\,.
\end{align}
Moreover, one can analogously prove that $\exists$ $L_
\delta$,  $C$ and $
\alpha>\gamma_1-1$ such that for any $\ell\geq L_\delta$ we have
\begin{equation}\label{R-bound2}|R_\ell(x)|\leq  C\varepsilon_\ell(x)^\alpha  \,.
\end{equation}
We split the proof in three cases.

\smallskip \noindent
{$\boldsymbol{A_1> 0}.$}  The proof proceeds exactly as in the previous lemma.

\smallskip \noindent
{$\boldsymbol{A_1= 0}.$} Combining~\eqref{risolutiva} with~\eqref{R-bound2} we obtain, for $k\geq L_\delta$
$$ \varepsilon_{k+1}(x)^{1-\gamma_1} - \varepsilon_{k}(x)^{1-\gamma_1} \geq A_0 - C \varepsilon_k(x)^\alpha \geq A_0 - C \delta^\alpha $$
where the last inequality follows from the definition of $L_\delta$ and since $\kappa(x)>x$. A telescopic argument yields, for $\ell\geq L_\delta$,
$$ \varepsilon_\ell(x)^{1-\gamma_1} \geq \varepsilon_{L_\delta}(x)^{1-\gamma_1} + (A_0 - C\delta^\alpha)(L- L_\delta)\gtrsim(L-L_\delta) $$
and hence
$$ |R_\ell(x)|\leq C (L-L_\delta)^{-\frac{\alpha}{\gamma_1-1}}\,.$$
Using the decomposition of $S_L$ in the previous lemma and recalling that $A_1=0$ we obtain
$$ S_L(x) = \varepsilon_{L_\delta}(x)^{1-\gamma_1} -A_0 L_0 + \sum_{\ell=L_0}^{L-1} R_\ell(x) $$
and hence the claim, indeed
\begin{equation}\label{eq--11} \Big|\sum_{\ell=L_0}^{L-1} R_\ell(x)\Big| \leq C \zeta\Big(\frac{\alpha}{\gamma_1-1}\Big)\,.
\end{equation}
with $\alpha> \gamma_1-1$.

\smallskip \noindent
{$\boldsymbol{A_1< 0}.$} For $\delta$ small enough and $\ell\geq L_\delta$, we have $$A_0 A_1\varepsilon_{\ell}(x)^{\gamma_1-1} + R_\ell(x) \geq -C \varepsilon_\ell(x)^{\gamma_1-1} \geq - C\delta^{\gamma_1-1}$$
where $C>0$ is a constant independent from $\ell$ and hence we obtain again~\eqref{eq--11}. Moreover, a similar telescopic argument yields
$$ \varepsilon_{\ell}(x)^{1-\gamma_1} \leq \varepsilon_{L_\delta}(x)^{1-\gamma_1} + A_0 (\ell- L_\delta) \leq A_0 \ell + \varepsilon_{L_\delta}(x)^{1-\gamma_1} \,.$$
Then
$$ A_0 \varepsilon_\ell(x)^{\gamma_1-1} - \frac{1}{\ell+1} \geq -\frac{ \varepsilon_{L_\delta}(x)^{1-\gamma_1} }{(\ell+1)(A_0\ell + \varepsilon_{L_\delta}(x)^{1-\gamma_1})} \geq -\frac{\varepsilon_{L_\delta}(x)^{1-\gamma_1}}{A_0\ell^2}$$
and since $A_1<0$ and $H_L-\log L >0$ we have
\begin{align*}S_L(x) = &  A_1(H_L - \log L )   - A_0 L_0 - A_1 H_{L_0}   +\sum_{\ell=L_0}^{L-1} R_\ell(x)  + \varepsilon_{L_0}(x)^{1-\gamma_1}+  A_1 \sum_{\ell=L_0}^{L-1}\Big( A_0\varepsilon_\ell(x)^{\gamma_1-1} - \frac{1}{\ell+1}\Big) \\
&\leq - A_0 L_0 - A_1H_{L_0} + C \zeta\Big(\frac{\alpha}{\gamma_1-1}\Big) + \varepsilon_{L_0}(x)^{1-\gamma_1} \Bigg( 1 - \frac{A_1}{A_0} \sum_{\ell=L_0}^L \ell^{-2}\Bigg) \,.
\end{align*}

Let us move to the lowe bound. From the definition of $L_\delta$, we have $\kappa_{L_\delta}(x) \geq 1-\delta$ and hence for $\delta$ small enough, $\kappa_{L_\delta}(x)\geq \kappa(0)>0$. Thus, since $\kappa$ is increasing in $[0,1]$, we have
$$\varepsilon_{L}(x)^{1-\gamma_1} \geq \varepsilon_{L-L_\delta}(0)^{1-\gamma_1}\,.$$
Using~\eqref{risolutiva}, since $\varepsilon_L(0) \to 0$, we have
$$ \varepsilon_{L+1}(0)^{1-\gamma_1} - \varepsilon_{L}(0)^{1-\gamma_1} = A_0 + o(1)$$
and hence from Stoltz-Cesaro theorem we have
$$ \varepsilon_L(0)^{1-\gamma_1} \sim A_0 L\,.$$
Using again Stoltz-Cesaro, since
$$ \varepsilon_{L+1}(0)^{1-\gamma_1} -A_0(L+1) - \Big( \varepsilon_{L}(0)^{1-\gamma_1} -A_0 L\Big) = A_0A_1 \varepsilon_\ell(0)^{\gamma_1-1}  + R_L(0) \sim \frac{A_1}L\,,$$
we have
$$\frac{\varepsilon_{L}(0)^{1-\gamma_1} -A_0 L}{\log L} \to A_1  $$ and in particular for $L$ large enough, since $A_1<0$ we have
$$\varepsilon_L(x)^{1-\gamma_1}\geq  \varepsilon_{L-L_\delta}(0)^{1-\gamma_1} \geq  A_0 (L-L_\delta) +  2A_1\log (L- L_\delta)\,. $$
Thus
$$A_0 \varepsilon_\ell(x)^{\gamma_1-1}  - \frac{1}{\ell+1}\leq  \frac{A_0(\ell+1) - A_0(\ell-L_\delta) - 2A_1\log(\ell-L_\delta)}{(\ell+1)\Big( A_0 (\ell-L_\delta) + 2A_1 \log(L-L_\delta)) \Big)} \leq
-\frac{2A_1 \log(\ell) }{A_0\ell  (\ell-L_\delta) }
$$
and hence the claim.

\end{proof}

\begin{proof}[Proof of~\Cref{fixed-point-sparse}] Setting $\beta_0 = A_0^{-\rho}$ and $\beta_1 = \beta_0\frac{A_1\rho}{A_0}$. Recalling the definition of $S_L$ we have
\begin{align} \nonumber
&1-\kappa_L(x)
  = \Big( A_0 L  + A_1 \log L + S_L(x)\Big)^{-\rho}= \beta_0 L^{-\rho} \Big( 1 + \frac{A_1}{A_0} \frac{\log L} L + \frac{S_L(x)}{A_0 L}\Big)^{-\rho}\\
&\quad    \label{taynew} = \beta_0 L^{-\rho} \Bigg( 1 - \frac{A_1\rho}{A_0} \frac{\log L} L - \frac{S_L(x)\rho}{A_0 L} + \frac{\rho(\rho+1)\big( A_1\log L + S_L(x)\big)^2}{2A_0^2 L^2(1+\xi_{L,x})^{\rho+2}}  \Bigg)
\end{align}
where the last identity holds for $\xi_{L,x}$  in the interval with endpoints $0$ and $y_{L,x}:=\frac{A_1 \log L + S_L(x) }{A_0 L}$.
Using the definition of $\beta_L$ (cf..~\eqref{betaL}) we obtain
\begin{equation}\label{beta_L-equi} \beta_L(x) =  \frac{\beta_0\rho}{A_0} S_L(x)  - \frac{\rho(\rho+1) \beta_0}{2A_0} \frac{(A_1\log L + S_L(x))^2}{L(1+\xi_{L,x})^{\rho+2}}  \,
\end{equation}
and hence, since $S_L(x)\to S(x)$,  we have the pointwise convergence of $\beta$ for every fixed $x\in [-1,1)$. To conclude the proof we need to prove the upper and lower bound for $\beta_L$. From~\eqref{upp=SL},  we obtain
$$ y_{L,x} \geq \frac{A_1 \log L - D}{A_0L} \geq -\frac{1}2 $$
for $L\geq L_1$ and in particular,
$$ \xi_{L,x}  \geq \min\{ 0, y_{L,x}\}\geq -\frac 1 2 \,.$$
Plugging this into~\eqref{beta_L-equi}   we obtain,
$$ \beta_L(x) \gtrsim  S_L(x) \Bigg( 1 -2CA_1\frac{\log L}L - C\frac{S_L(x)}L\Bigg) -C_1A_1^2 \frac{(\log L)^2}{L}$$
$$ \beta_L(x) \lesssim   S_L(x) $$
for some positive constant $C$. Using~\eqref{upp=SL} and~\eqref{low=SL} (or~\eqref{upp=SL-even} and~\eqref{low=SL-even}  when $\kappa$ is even) , provided that $x$ is chosen sufficiently small so $S_L(x)>0$ (possible from ~\eqref{low=SL}--\eqref{low=SL-even}) we obtain~\eqref{beta-lower}  and
$$ \beta_L(x)\lesssim (1-x^2)^{1-\gamma_1} \,.$$
Moreover, there exists  $L_0$ such that $\kappa_{L_0}(x)\in [0,1]$. Since $\kappa$ is increasing on $[0,1]$, also $\beta_L$ is increasing. Then
$$ \beta_L(x)  = \beta_{L-L_0}(\beta_{L_0}(x))\geq \beta_{L-L_0}(0) \to  \beta(0)$$
and hence~\eqref{beta-upper}.

\end{proof}

\subsection{Background results on holomorphic functions and real-analytic limits}\label{sec:holo}

The goal of this section is to provide a brief and self-contained introduction to some standard results on holomorphic and real-analytic functions. The purpose is twofold: to recall, in elementary terms, what holomorphic functions are and why they are rigid; to show how this rigidity implies that, under suitable \emph{structured assumptions}, a point-wise limit of real analytic functions remains real analytic (whereas this is false in general). For all the missing details, see, e.g., the classical references \cite{Conway78,Rudin1987}.

\begin{definition}
	A function $f:(-1,1)\to\mathbb{R}$ is \emph{real analytic} if, for every $x_0\in(-1,1)$, it can be written in a neighborhood of $x_0$ as a (absolutely) convergent\footnote{Recall that convergence of a power series with radius $r$ automatically implies absolute convergence for $|x-x_0|<r$.} power series
	\[
	f(x)=\sum_{n\ge 0} c_n (x-x_0)^n \qquad \text{for } |x-x_0|<r,
	\]
	for some $r>0$. The largest such $r$ is called the \emph{radius of convergence} at $x_0$. \\
\end{definition}

\begin{definition}
Let $D \subset \mathbb{C}$ be open.
A function $F : D \to \mathbb{C}$ is \emph{holomorphic} (or \emph{complex analytic})
if for every $z_0 \in D$ there exists $r>0$ and a sequence of complex coefficients $(a_n)_{n\ge 0}$
such that
\[
F(z) = \sum_{n=0}^\infty a_n (z-z_0)^n
\qquad \text{for all } |z-z_0| < r.
\]

\end{definition}
\begin{remark}[Power series and holomorphic functions]
	It is standard for every power series
	\begin{equation}
	    \label{eq:power series}
        F(z)=\sum_{n\ge 0} a_n z^n\,,
	\end{equation}
	with radius of convergence $r>0$, is holomorphic on the disk $\{z\in\C:|z|<r\}$.
	Vice versa, if $F$ is holomorphic on $\{z\in\C:|z|<r\}$, then \eqref{eq:power series} holds with radius at least $r$ (see, e.g. \cite[p.72]{Conway78}).
\end{remark}
\begin{remark}[Real analytic and holomorphic extension]\label{radius}
    If is a real analytic function $f$ with radius of convergence $r$ at $x_0=0$, the same series defines a holomorphic extension to the unit disk
	\[
	\mathbb{D}:=\{z\in\mathbb{C}:|z|<1\}\,.
	\]
\end{remark}

Holomorphic (complex analytic) functions enjoy strong rigidity properties that have no analog
in real analysis.
In particular, boundedness on compact sets and convergence on small subsets
often force global convergence and preservation of analyticity. These principles will be the
key tools used to prove the convergence result for real analytic functions. The concept of boundedness that we need to introduce is the following.

\begin{definition}[Locally uniformly bounded family]
	A family $\mathcal{F}$ of functions $F : D \to \mathbb{C}$ is said to be \emph{locally uniformly bounded}
	if for every compact $K \subset D$ there is a constant $C_K < \infty$ such that
	\[
	\sup_{F \in \mathcal{F}} \sup_{z \in K} |F(z)| \le C_K.
	\]
\end{definition}

Vitali's theorem shows that pointwise convergence of locally uniformly bounded holomorphic functions on a set with
an accumulation point forces uniform convergence.

\begin{theorem}[Vitali]
	Let $(F_n)_{n\in \N}$ be a sequence of holomorphic functions on a open set $D \subset \mathbb{C}$ that is locally
	uniformly bounded. If $F_n$ converges pointwise on a set $E \subset D$ having an accumulation point
	in $D$, then $F_n$ converges uniformly on compact subsets of $D$ to a holomorphic function.
\end{theorem}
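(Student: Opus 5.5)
The plan is the classical one: extract a locally uniformly convergent subsequence by Montel's theorem, and then use the identity theorem to show that the whole sequence converges.

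\emph{Normality.} Local uniform boundedness means that for every compact $K\subset D$ the family is bounded on $K$. By the Cauchy integral formula on small discs, the derivatives $F_n'$ are then also uniformly bounded on compact sets. Hence the family is equicontinuous on compacts. By Arzel\`a--Ascoli applied on an exhaustion of $D$ by compacts, plus a diagonal argument, every subsequence of $(F_n)$ has a further subsequence converging uniformly on compact subsets of $D$ (this is Montel's theorem). Any such limit is holomorphic, since uniform limits of holomorphic functions are holomorphic: Morera's theorem passes to the limit in $\oint F_n\,dz=0$.

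\emph{Uniqueness of the limit.} Let $G_1,G_2$ be limits along two subsequences. Both agree with the pointwise limit of $F_n$ on $E$, so $G_1-G_2$ vanishes on a set with an accumulation point $z_0\in D$.

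The main obstacle is the identity theorem, which here needs connectedness. If $D$ is a domain (connected), the identity theorem gives $G_1\equiv G_2$ on $D$. If $D$ is not connected, the statement must be read componentwise: one needs $E$ to accumulate in every component. I would assume $D$ connected, which is the case of interest since $D$ is a disc or a neighbourhood of $(-1,1)$. The identity theorem itself I prove directly. Let $U$ be the set of points where all derivatives of $G_1-G_2$ vanish. $U$ is closed by continuity of the derivatives and open by the power series expansion. $U$ is nonempty because the zeros are not isolated at $z_0$: if the first nonzero Taylor coefficient at $z_0$ had order $m$, then $G_1-G_2=(z-z_0)^m h$ with $h(z_0)\neq0$, so $z_0$ would be an isolated zero. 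By connectedness $U=D$.

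\emph{Conclusion.} Every subsequence has a further subsequence converging locally uniformly to the same holomorphic $G$. If $F_n$ did not converge uniformly to $G$ on some compact $K$, there would be a subsequence staying at distance at least $\varepsilon$ from $G$ in $\sup_K$. Extracting a convergent sub-subsequence from it contradicts uniqueness of the limit. Hence $F_n\to G$ uniformly on compact subsets of $D$, and $G$ is holomorphic.
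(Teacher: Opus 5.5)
The paper gives no proof of this statement; it quotes Vitali's theorem as a classical result, citing Conway and Rudin. Your argument is the standard textbook proof found in those references, and it is correct: Montel's theorem via Cauchy estimates and Arzel\`a--Ascoli, then the identity theorem to show that all subsequential limits coincide, then the sub-subsequence argument.

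Your connectedness caveat is not pedantry. It corrects the statement as printed, which is false for a general open set. Take $D=\{|z|<1\}\cup\{|z-3|<1\}$ and $E=(-\tfrac12,\tfrac12)$. Let $F_n\equiv 0$ on the first disc and $F_n\equiv(-1)^n$ on the second. Then the family is locally uniformly bounded and converges pointwise on $E$, but it does not converge on the second component.

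The paper only applies the theorem in \Cref{prop:real_analytic_limit}, with $D=\mathbb{D}$ and $E=(-1,1)$. Since the unit disc is connected, nothing downstream is affected.
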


We are ready to prove the result that will be applied in our proofs. It allows us to show that, under suitable assumptions, a sequence of real analytic functions with a certain radius of convergence converges to a real analytic function with the same radius of convergence.

\begin{proposition}
\label{prop:real_analytic_limit}
Let $(f_n)_{n\in\mathbb N}$ be a sequence of real-analytic functions on $(-1,1)$. Assume that
\begin{enumerate}
\item $f_n(x)\to f(x)$ pointwise for every $x\in(-1,1)$;
\item The radius of convergence of $f_n$ at $0$ is at least $1$;
\item For every $n$ we have $f_n^{(k)}(0)\ge0$ $\forall\,k\ge k_0$, with $k_0$ independent of $n$.
\end{enumerate}
Then $f$ is real analytic on $(-1,1)$, with $f^{(k)}(0)\ge0$ $\forall k\ge k_0$ and the radius of convergence of $f$ at $0$ is at least $1$.
\end{proposition}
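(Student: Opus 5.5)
The plan is to pass to holomorphic extensions on the unit disk $\mathbb D$ and apply Vitali's theorem. The only real work is proving that the extensions are locally uniformly bounded.

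By \Cref{radius}, each $f_n$ extends to a holomorphic function $F_n(z)=\sum_k c_{n,k}z^k$ on $\mathbb D$, with $c_{n,k}=f_n^{(k)}(0)/k!$. We split $F_n=P_n+G_n$. Here $P_n(z)=\sum_{k<k_0}c_{n,k}z^k$ is a polynomial of degree $<k_0$, and $G_n(z)=\sum_{k\ge k_0}c_{n,k}z^k$ has nonnegative coefficients. For $|z|\le r<1$ we have $|G_n(z)|\le G_n(r)$. So it suffices to bound two things uniformly in $n$: the coefficients of $P_n$, and $G_n(s)$ for every $s<1$. Pointwise convergence only tells us that $f_n(x)$ is bounded in $n$ for each fixed $x$, so these bounds must come from finitely many such values.

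This is the main step. Fix $s\in(0,1)$ and set $M:=\sup_n\max\{|f_n(s)|,|f_n(\varepsilon t_j)|\}<\infty$. Here $0<t_1<\dots<t_{k_0}\le 1$ are fixed points and $\varepsilon\in(0,s)$ will be chosen small. Since the coefficients of $G_n$ are nonnegative, $x\mapsto G_n(x)/x^{k_0}$ is nondecreasing on $(0,1)$. Hence $0\le G_n(\varepsilon t_j)\le \varepsilon^{k_0}G_n(s)/s^{k_0}$. Write $P_n(\varepsilon t)=\sum_{k<k_0}(c_{n,k}\varepsilon^k)t^k$ and invert the fixed Vandermonde matrix $(t_j^k)$. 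This gives
\[
|c_{n,k}|\,\varepsilon^{k}\le C\big(M+\varepsilon^{k_0}s^{-k_0}G_n(s)\big),\qquad k<k_0,
\]
with $C$ depending only on the $t_j$. Therefore $|P_n(s)|\le Ck_0\,\varepsilon^{-(k_0-1)}\big(M+\varepsilon^{k_0}s^{-k_0}G_n(s)\big)$. Plugging this into $G_n(s)=f_n(s)-P_n(s)\le M+|P_n(s)|$ yields
\[
G_n(s)\le M+C'\varepsilon^{-(k_0-1)}M+C'\varepsilon s^{-k_0}G_n(s).
\]
Taking $\varepsilon$ small, we absorb the last term into the left-hand side. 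This gives $\sup_nG_n(s)<\infty$, and then also $\sup_n|c_{n,k}|<\infty$ for $k<k_0$. As a result, for every $r<s$ we get $\sup_n\sup_{|z|\le r}|F_n(z)|\le \sup_n\big(\sum_{k<k_0}|c_{n,k}|+G_n(s)\big)<\infty$. Since $s<1$ is arbitrary, $(F_n)$ is locally uniformly bounded on $\mathbb D$.

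Now apply Vitali's theorem with $E=(-1,1)$, which has accumulation points in $\mathbb D$. It gives a holomorphic $F$ on $\mathbb D$ with $F_n\to F$ uniformly on compact subsets of $\mathbb D$, and $F=f$ on $(-1,1)$. Because $F$ is holomorphic on $\mathbb D$, its Taylor series at $0$ has radius of convergence at least $1$, so $f$ is real analytic there with radius at least $1$. Real analyticity at every point of $(-1,1)$ follows by re-expanding the power series. Finally, Cauchy's integral formula on a circle $|z|=1/2$, combined with uniform convergence, gives $c_{n,k}\to f^{(k)}(0)/k!$. Hence $f^{(k)}(0)\ge0$ for all $k\ge k_0$.
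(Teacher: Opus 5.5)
Your proof is correct. Its skeleton is the paper's: extend each $f_n$ to $F_n$ on $\mathbb D$, prove local uniform boundedness, apply Vitali's theorem with $E=(-1,1)$, and use Cauchy's formula to carry the sign conditions on the coefficients to the limit.

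You differ from the paper in the boundedness step, and there your version is more complete. The paper's estimate $\sup_{|z|<r}|F_n(z)|\le\sum_{k\le k_0}\bigl(|f_n^{(k)}(0)|-f_n^{(k)}(0)\bigr)+f_n(r)$ still has a right-hand side that depends on $n$. The paper then only argues that $(f_n(r))_n$ is bounded, and never justifies that the low-order coefficients are bounded uniformly in $n$.

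This gap is harmless when $k_0\le 1$. Then the only such coefficient is $f_n(0)$, which converges. This case covers the paper's applications, namely the functions $\beta_L-\int\beta_L\,\mathrm d\mu$, whose derivatives of order $\ge1$ are nonnegative.

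For $k_0\ge 2$, however, one genuinely has to rule out large negative low-order coefficients being compensated by the nonnegative tail. Your argument supplies exactly this: the Vandermonde inversion at the points $\varepsilon t_j$, the monotonicity of $G_n(x)/x^{k_0}$, and the absorption of the term $C'\varepsilon s^{-k_0}G_n(s)$. So the paper's route is shorter and suffices for the case it actually uses, while yours proves the proposition as stated.

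One presentational point concerns the order of choices. Fix $s$ and the $t_j$, choose $\varepsilon$ depending only on $C'$, $s$ and $k_0$, and only then define $M$. Since $C'$ does not involve $M$, the dependence of $M$ on $\varepsilon$ causes no circularity.
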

\begin{proof}
By 2. $f_n$ has a power series expansion and a holomorphic extension $F_n$ on $\mathbb D$ of the form
\[
        f_n(x)=\sum_{k=0}^\infty f_n^{(k)}(0)\,\frac{x^k}{k!}\,,\quad\quad |x|<1\,,\quad\quad F_n(z):=\sum_{k=0}^\infty f_n^{(k)}(0)\,\frac{z^k}{k!},\qquad |z|<1\,.
\]
Fix now $r\in(0,1)$. By assumption 3., setting $c_r:=\sum_{k=0}^{k_0}|f^{(k)}_n(0)| \frac{r^k}{k!}$, we have
\[
\sup_n\,\sup_{|z|<r}|F_n(z)|\le \sum_{k=0}^{k_0}(|f^{(k)}_n(0)|-f^{(k)}_n(0))+f_n(r)<\infty
\]
Since $f_n(r)\to f(r)$ by assumption 1., the sequence $(f_n(r))_n$ is bounded by a constant $c_r$ independent of $n$ and so $(F_n)_n$ is a locally uniformly bounded family of holomorphic functions on $\mathbb D$. Moreover, again by assumption 1. $(F_n)_n$ converges pointwise to $F$ on the set $(-1,1)\subset\mathbb D$, which contains many accumulation points. By Vitali's theorem, we have then that $F_n$ converges uniformly on the compact sets of $\mathbb D$ to a unique holomorphic function $F$ on $\mathbb D$ with radius at least $1$ at $0$ (see Remark \ref{radius}). Thus, by Cauchy's integral formula also all the derivatives converge uniformly con compact sets and also the coefficients $f^{(k)}_n(0)$. In particular, $F(z)=f(z)$ for $z\in(-1,1)$ and so $f$ is real analytic with radius at least~$1$, with $f^{(k)}(0)\ge0$ $\forall k\ge k_0$.
\end{proof}

\begin{remark}
	The argument above yields real analyticity of the limit function in the open interval $(-1,1)$,
	but provides no information on the analyticity at the boundary points $\pm1$, where
	singularities of the limit may occur.
	For this reason, an analogous statement on the closed interval $[-1,1]$ is false without stronger
	assumptions.
\end{remark}
\section*{Acknowledgements}
The authors are associated with INdAM (Istituto Nazionale di Alta Matematica “Francesco Severi”) and the GNAMPA group.
This work was partially supported by the MUR Excellence Department Project MatMod@TOV, awarded to the Department of Mathematics, University of Rome Tor Vergata (CUP E83C18000100006). We also acknowledge financial support from the MUR 2022 PRIN project GRAFIA (project code 202284Z9E4) and the University of Rome Tor Vergata Research Project METRO (CUP E83C25000630005).

\bibliographystyle{abbrvurl}
 \bibliography{ref}


\end{document}